\DeclareMathAlphabet{\mathpzc}{OT1}{pzc}{m}{it}
\numberwithin{equation}{section}
\def\eqnarray{\stepcounter{equation}\let\@currentlabel=\theequation
\global\@eqnswtrue
\tabskip\@centering\let\\=\@eqncr
$$\halign to \displaywidth\bgroup\hfil\global\@eqcnt\z@
  $\displaystyle\tabskip\z@{##}$&\global\@eqcnt\@ne
  \hfil$\displaystyle{{}##{}}$\hfil
  &\global\@eqcnt\tw@ $\displaystyle{##}$\hfil
  \tabskip\@centering&\llap{##}\tabskip\z@\cr}
\def\endeqnarray{\@@eqncr\egroup
      \global\advance\c@equation\m@ne$$\global\@ignoretrue}
\newtheorem{theorem}{Theorem}[section]
\newtheorem{corollary}[theorem]{Corollary}
\newtheorem{definition}[theorem]{Definition}
\newtheorem{example}[theorem]{Example}
\newtheorem{lemma}[theorem]{Lemma}
\newtheorem{proposition}[theorem]{Proposition}
\newtheorem{remark}[theorem]{Remark}
\numberwithin{equation}{section}
\thanks{The first author is partially supported by NSF grants DMS-2110263, DMS-1913004 
and the Air Force Office of Scientific Research under Award NO: FA9550-19-1-0036. 
The last author is partially supported by the Air Force Office of Scientific Research under 
Award NO:  FA9550-18-1-0242 and the Army Research Office (ARO) under Award NO:  W911NF-20-1-0115.
}
\keywords{Generalized Caputo time-fractional derivative, time fractional sudiffusive semilinear equations, optimal solutions, optimality conditions}
\subjclass[2010]{
49J20,  	49K20,      35S15,  	65R20  	}
\chardef\@x10\chardef\@xv60
\def\tcitime{
\def\@time{%
  \@minute\time\@hour\@minute\divide\@hour\@xv
  \ifnum\@hour<\@x 0\fi\the\@hour:%
  \multiply\@hour\@xv\advance\@minute-\@hour
  \ifnum\@minute<\@x 0\fi\the\@minute
  }}%
\def\QCTOpt[#1]#2{%
  \def\QCTOptB{#1}
  \def\QCTOptA{#2}
}
\def\QCTNOpt#1{%
  \def\QCTOptA{#1}
  \let\QCTOptB\empty
}
\def\Qct{%
  \@ifnextchar[{%
    \QCTOpt}{\QCTNOpt}
}
\def\QCBOpt[#1]#2{%
  \def\QCBOptB{#1}
  \def\QCBOptA{#2}
}
\def\QCBNOpt#1{%
  \def\QCBOptA{#1}
  \let\QCBOptB\empty
}
\def\Qcb{%
  \@ifnextchar[{%
    \QCBOpt}{\QCBNOpt}
}
\def\PrepCapArgs{%
  \ifx\QCBOptA\empty
    \ifx\QCTOptA\empty
      {}%
    \else
      \ifx\QCTOptB\empty
        {\QCTOptA}%
      \else
        [\QCTOptB]{\QCTOptA}%
      \fi
    \fi
  \else
    \ifx\QCBOptA\empty
      {}%
    \else
      \ifx\QCBOptB\empty
        {\QCBOptA}%
      \else
        [\QCBOptB]{\QCBOptA}%
      \fi
    \fi
  \fi
}
\def\GRAPHICSPS#1{%
 \ifcase\GRAPHICSTYPE
   \special{ps: #1}%
 \or
   \special{language "PS", include "#1"}%
 \fi
}%
\def\graffile#1#2#3#4{%
    \leavevmode
    \raise -#4 \BOXTHEFRAME{%
        \hbox to #2{\raise #3\hbox to #2{\null #1\hfil}}}%
}%
\def\draftbox#1#2#3#4{%
 \leavevmode\raise -#4 \hbox{%
  \frame{\rlap{\protect\tiny #1}\hbox to #2%
   {\vrule height#3 width\z@ depth\z@\hfil}%
  }%
 }%
}%
\newif\ifwasdraft
\def\GRAPHIC#1#2#3#4#5{%
 \ifnum\draft=\@ne\draftbox{#2}{#3}{#4}{#5}%
  \else\graffile{#1}{#3}{#4}{#5}%
  \fi
 }%
\def\addtoLaTeXparams#1{%
    \edef\LaTeXparams{\LaTeXparams #1}}%
\newif\ifBoxFrame \BoxFramefalse
\newif\ifOverFrame \OverFramefalse
\newif\ifUnderFrame \UnderFramefalse
\def\BOXTHEFRAME#1{%
   \hbox{%
      \ifBoxFrame
         \frame{#1}%
      \else
         {#1}%
      \fi
   }%
}
\def\doFRAMEparams#1{\BoxFramefalse\OverFramefalse\UnderFramefalse\readFRAMEparams#1\end}%
\def\readFRAMEparams#1{%
 \ifx#1\end%
  \let\next=\relax
  \else
  \ifx#1i\dispkind=\z@\fi
  \ifx#1d\dispkind=\@ne\fi
  \ifx#1f\dispkind=\tw@\fi
  \ifx#1t\addtoLaTeXparams{t}\fi
  \ifx#1b\addtoLaTeXparams{b}\fi
  \ifx#1p\addtoLaTeXparams{p}\fi
  \ifx#1h\addtoLaTeXparams{h}\fi
  \ifx#1X\BoxFrametrue\fi
  \ifx#1O\OverFrametrue\fi
  \ifx#1U\UnderFrametrue\fi
  \ifx#1w
    \ifnum\draft=1\wasdrafttrue\else\wasdraftfalse\fi
    \draft=\@ne
  \fi
  \let\next=\readFRAMEparams
  \fi
 \next
 }%
\def\IFRAME#1#2#3#4#5#6{%
      \bgroup
      \let\QCTOptA\empty
      \let\QCTOptB\empty
      \let\QCBOptA\empty
      \let\QCBOptB\empty
      #6%
      \parindent=0pt%
      \leftskip=0pt
      \rightskip=0pt
      \setbox0 = \hbox{\QCBOptA}%
      \@tempdima = #1\relax
      \ifOverFrame
          \typeout{This is not implemented yet}%
          \show\HELP
      \else
         \ifdim\wd0>\@tempdima
            \advance\@tempdima by \@tempdima
            \ifdim\wd0 >\@tempdima
               \textwidth=\@tempdima
               \setbox1 =\vbox{%
                  \noindent\hbox to \@tempdima{\hfill\GRAPHIC{#5}{#4}{#1}{#2}{#3}\hfill}\\%
                  \noindent\hbox to \@tempdima{\parbox[b]{\@tempdima}{\QCBOptA}}%
               }%
               \wd1=\@tempdima
            \else
               \textwidth=\wd0
               \setbox1 =\vbox{%
                 \noindent\hbox to \wd0{\hfill\GRAPHIC{#5}{#4}{#1}{#2}{#3}\hfill}\\%
                 \noindent\hbox{\QCBOptA}%
               }%
               \wd1=\wd0
            \fi
         \else
            \ifdim\wd0>0pt
              \hsize=\@tempdima
              \setbox1 =\vbox{%
                \unskip\GRAPHIC{#5}{#4}{#1}{#2}{0pt}%
                \break
                \unskip\hbox to \@tempdima{\hfill \QCBOptA\hfill}%
              }%
              \wd1=\@tempdima
           \else
              \hsize=\@tempdima
              \setbox1 =\vbox{%
                \unskip\GRAPHIC{#5}{#4}{#1}{#2}{0pt}%
              }%
              \wd1=\@tempdima
           \fi
         \fi
         \@tempdimb=\ht1
         \advance\@tempdimb by \dp1
         \advance\@tempdimb by -#2%
         \advance\@tempdimb by #3%
         \leavevmode
         \raise -\@tempdimb \hbox{\box1}%
      \fi
      \egroup%
}%
\def\DFRAME#1#2#3#4#5{%
 \begin{center}
     \let\QCTOptA\empty
     \let\QCTOptB\empty
     \let\QCBOptA\empty
     \let\QCBOptB\empty
     \ifOverFrame 
        #5\QCTOptA\par
     \fi
     \GRAPHIC{#4}{#3}{#1}{#2}{\z@}
     \ifUnderFrame 
        \nobreak\par #5\QCBOptA
     \fi
 \end{center}%
 }%
\def\FFRAME#1#2#3#4#5#6#7{%
 \begin{figure}[#1]%
  \let\QCTOptA\empty
  \let\QCTOptB\empty
  \let\QCBOptA\empty
  \let\QCBOptB\empty
  \ifOverFrame
    #4
    \ifx\QCTOptA\empty
    \else
      \ifx\QCTOptB\empty
        \caption{\QCTOptA}%
      \else
        \caption[\QCTOptB]{\QCTOptA}%
      \fi
    \fi
    \ifUnderFrame\else
      \label{#5}%
    \fi
  \else
    \UnderFrametrue%
  \fi
  \begin{center}\GRAPHIC{#7}{#6}{#2}{#3}{\z@}\end{center}%
  \ifUnderFrame
    #4
    \ifx\QCBOptA\empty
      \caption{}%
    \else
      \ifx\QCBOptB\empty
        \caption{\QCBOptA}%
      \else
        \caption[\QCBOptB]{\QCBOptA}%
      \fi
    \fi
    \label{#5}%
  \fi
  \end{figure}%
 }%
\def\makeactives{
  \catcode`\"=\active
  \catcode`\;=\active
  \catcode`\:=\active
  \catcode`\'=\active
  \catcode`\~=\active
}
   \gdef\activesoff{%
      \def"{\string"}
      \def;{\string;}
      \def:{\string:}
      \def'{\string'}
      \def~{\string~}
    }
\def\FRAME#1#2#3#4#5#6#7#8{%
 \bgroup
 \@ifundefined{bbl@deactivate}{}{\activesoff}
 \ifnum\draft=\@ne
   \wasdrafttrue
 \else
   \wasdraftfalse%
 \fi
 \def\LaTeXparams{}%
 \dispkind=\z@
 \def\LaTeXparams{}%
 \doFRAMEparams{#1}%
 \ifnum\dispkind=\z@\IFRAME{#2}{#3}{#4}{#7}{#8}{#5}\else
  \ifnum\dispkind=\@ne\DFRAME{#2}{#3}{#7}{#8}{#5}\else
   \ifnum\dispkind=\tw@
    \edef\@tempa{\noexpand\FFRAME{\LaTeXparams}}%
    \@tempa{#2}{#3}{#5}{#6}{#7}{#8}%
    \fi
   \fi
  \fi
  \ifwasdraft\draft=1\else\draft=0\fi{}%
  \egroup
 }%
\def\TEXUX#1{"texux"}
\long\def\QQQ#1#2{%
     \long\expandafter\def\csname#1\endcsname{#2}}%
\long\def\QQA#1#2{}%
\def\QTR#1#2{{\csname#1\endcsname #2}}
\def\EXPAND#1[#2]#3{}%
\def\NOEXPAND#1[#2]#3{}%
\def\LaTeXparent#1{}%
\def\ChildStyles#1{}%
\def\ChildDefaults#1{}%
\def\QTagDef#1#2#3{}%
\def\QQfnmark#1{\footnotemark}
\def\makeatletter\input gnuindex.sty\makeatother\makeindex{\makeatletter\input gnuindex.sty\makeatother\makeindex}%
\def\initial#1{\bigbreak{\raggedright\large\bf #1}\kern 2\p@\penalty3000}}%
 \def\abstract{%
  \if@twocolumn
   \section*{Abstract (Not appropriate in this style!)}%
   \else \small 
   \begin{center}{\bf Abstract\vspace{-.5em}\vspace{\z@}}\end{center}%
   \quotation 
   \fi
  }%
   \def\registered{\relax\ifmmode{}\r@gistered
                    \else$\m@th\r@gistered$\fi}%
 \def\r@gistered{^{\ooalign
  {\hfil\raise.07ex\hbox{$\scriptstyle\rm\text{R}$}\hfil\crcr
  \mathhexbox20D}}}}{}%
\newdimen\theight
\def\Column{%
 \vadjust{\setbox\z@=\hbox{\scriptsize\quad\quad tcol}%
  \theight=\ht\z@\advance\theight by \dp\z@\advance\theight by \lineskip
  \kern -\theight \vbox to \theight{%
   \rightline{\rlap{\box\z@}}%
   \vss
   }%
  }%
 }%
\def\qed{%
 \ifhmode\unskip\nobreak\fi\ifmmode\ifinner\else\hskip5\p@\fi\fi
 \hbox{\hskip5\p@\vrule width4\p@ height6\p@ depth1.5\p@\hskip\p@}%
 }%
\def\miss{\hbox{\vrule height2\p@ width 2\p@ depth\z@}}%
\def\tcol#1{{\baselineskip=6\p@ \vcenter{#1}} \Column}  %
\def\newfmtname{LaTeX2e}
\def\chkcompat{%
   \if@compatibility
   \else
     \usepackage{latexsym}
   \fi
}
  \DeclareOldFontCommand{\rm}{\normalfont\rmfamily}{\mathrm}
  \DeclareOldFontCommand{\sf}{\normalfont\sffamily}{\mathsf}
  \DeclareOldFontCommand{\tt}{\normalfont\ttfamily}{\mathtt}
  \DeclareOldFontCommand{\bf}{\normalfont\bfseries}{\mathbf}
  \DeclareOldFontCommand{\it}{\normalfont\itshape}{\mathit}
  \DeclareOldFontCommand{\sl}{\normalfont\slshape}{\@nomath\sl}
  \DeclareOldFontCommand{\sc}{\normalfont\scshape}{\@nomath\sc}
\def\alpha{{\Greekmath 010B}}%
\def\beta{{\Greekmath 010C}}%
\def\gamma{{\Greekmath 010D}}%
\def\delta{{\Greekmath 010E}}%
\def\epsilon{{\Greekmath 010F}}%
\def\zeta{{\Greekmath 0110}}%
\def\eta{{\Greekmath 0111}}%
\def\theta{{\Greekmath 0112}}%
\def\iota{{\Greekmath 0113}}%
\def\kappa{{\Greekmath 0114}}%
\def\lambda{{\Greekmath 0115}}%
\def\mu{{\Greekmath 0116}}%
\def\nu{{\Greekmath 0117}}%
\def\xi{{\Greekmath 0118}}%
\def\pi{{\Greekmath 0119}}%
\def\rho{{\Greekmath 011A}}%
\def\sigma{{\Greekmath 011B}}%
\def\tau{{\Greekmath 011C}}%
\def\upsilon{{\Greekmath 011D}}%
\def\phi{{\Greekmath 011E}}%
\def\chi{{\Greekmath 011F}}%
\def\psi{{\Greekmath 0120}}%
\def\omega{{\Greekmath 0121}}%
\def\varepsilon{{\Greekmath 0122}}%
\def\vartheta{{\Greekmath 0123}}%
\def\varpi{{\Greekmath 0124}}%
\def\varrho{{\Greekmath 0125}}%
\def\varsigma{{\Greekmath 0126}}%
\def\varphi{{\Greekmath 0127}}%
\def\nabla{{\Greekmath 0272}}
\def\FindBoldGroup{%
   {\setbox0=\hbox{$\mathbf{x\global\edef\theboldgroup{\the\mathgroup}}$}}%
}
\def\Greekmath#1#2#3#4{%
    \if@compatibility
        \ifnum\mathgroup=\symbold
           \mathchoice{\mbox{\boldmath$\displaystyle\mathchar"#1#2#3#4$}}%
                      {\mbox{\boldmath$\textstyle\mathchar"#1#2#3#4$}}%
                      {\mbox{\boldmath$\scriptstyle\mathchar"#1#2#3#4$}}%
                      {\mbox{\boldmath$\scriptscriptstyle\mathchar"#1#2#3#4$}}%
        \else
           \mathchar"#1#2#3#4%
        \fi 
    \else 
        \FindBoldGroup
        \ifnum\mathgroup=\theboldgroup 
           \mathchoice{\mbox{\boldmath$\displaystyle\mathchar"#1#2#3#4$}}%
                      {\mbox{\boldmath$\textstyle\mathchar"#1#2#3#4$}}%
                      {\mbox{\boldmath$\scriptstyle\mathchar"#1#2#3#4$}}%
                      {\mbox{\boldmath$\scriptscriptstyle\mathchar"#1#2#3#4$}}%
        \else
           \mathchar"#1#2#3#4%
        \fi     	    
	  \fi}
\newif\ifGreekBold  \GreekBoldfalse
\let\SAVEPBF=\pbf
\def\pbf{\GreekBoldtrue\SAVEPBF}%
  \newcounter{equationnumber}  
  \def\mathletters{%
     \addtocounter{equation}{1}
     \edef\@currentlabel{\theequation}%
     \setcounter{equationnumber}{\c@equation}
     \setcounter{equation}{0}%
     \edef\theequation{\@currentlabel\noexpand\alph{equation}}%
  }
    \def\BibTeX{{\rm B\kern-.05em{\sc i\kern-.025em b}\kern-.08em
                 T\kern-.1667em\lower.7ex\hbox{E}\kern-.125emX}}}{}%
\def\AmS{{\protect\usefont{OMS}{cmsy}{m}{n}%
                A\kern-.1667em\lower.5ex\hbox{M}\kern-.125emS}}}{}%
\let\DOTSI\relax
\def\RIfM@{\relax\ifmmode}%
\def\FN@{\futurelet\next}%
\def\iint{\DOTSI\intno@\tw@\FN@\ints@}%
\def\iiint{\DOTSI\intno@\thr@@\FN@\ints@}%
\def\iiiint{\DOTSI\intno@4 \FN@\ints@}%
\def\idotsint{\DOTSI\intno@\z@\FN@\ints@}%
\def\ints@{\findlimits@\ints@@}%
\newif\iflimtoken@
\newif\iflimits@
\def\findlimits@{\limtoken@true\ifx\next\limits\limits@true
 \else\ifx\next\nolimits\limits@false\else
 \limtoken@false\ifx\ilimits@\nolimits\limits@false\else
 \ifinner\limits@false\else\limits@true\fi\fi\fi\fi}%
\def\multint@{\int\ifnum\intno@=\z@\intdots@                          
 \else\intkern@\fi                                                    
 \ifnum\intno@>\tw@\int\intkern@\fi                                   
 \ifnum\intno@>\thr@@\int\intkern@\fi                                 
 \int}
\def\multintlimits@{\intop\ifnum\intno@=\z@\intdots@\else\intkern@\fi
 \ifnum\intno@>\tw@\intop\intkern@\fi
 \ifnum\intno@>\thr@@\intop\intkern@\fi\intop}%
\def\intic@{%
    \mathchoice{\hskip.5em}{\hskip.4em}{\hskip.4em}{\hskip.4em}}%
\def\negintic@{\mathchoice
 {\hskip-.5em}{\hskip-.4em}{\hskip-.4em}{\hskip-.4em}}%
\def\ints@@{\iflimtoken@                                              
 \def\ints@@@{\iflimits@\negintic@
   \mathop{\intic@\multintlimits@}\limits                             
  \else\multint@\nolimits\fi                                          
  \eat@}
 \else                                                                
 \def\ints@@@{\iflimits@\negintic@
  \mathop{\intic@\multintlimits@}\limits\else
  \multint@\nolimits\fi}\fi\ints@@@}%
\def\intkern@{\mathchoice{\!\!\!}{\!\!}{\!\!}{\!\!}}%
\def\plaincdots@{\mathinner{\cdotp\cdotp\cdotp}}%
\def\intdots@{\mathchoice{\plaincdots@}%
 {{\cdotp}\mkern1.5mu{\cdotp}\mkern1.5mu{\cdotp}}%
 {{\cdotp}\mkern1mu{\cdotp}\mkern1mu{\cdotp}}%
 {{\cdotp}\mkern1mu{\cdotp}\mkern1mu{\cdotp}}}%
\def\RIfM@{\relax\protect\ifmmode}
\def\text{\RIfM@\expandafter\text@\else\expandafter\mbox\fi}
\let\nfss@text\text
\def\text@#1{\mathchoice
   {\textdef@\displaystyle\f@size{#1}}%
   {\textdef@\textstyle\tf@size{\firstchoice@false #1}}%
   {\textdef@\textstyle\sf@size{\firstchoice@false #1}}%
   {\textdef@\textstyle \ssf@size{\firstchoice@false #1}}%
   \glb@settings}
\def\textdef@#1#2#3{\hbox{{%
                    \everymath{#1}%
                    \let\f@size#2\selectfont
                    #3}}}
\newif\iffirstchoice@
\def\Let@{\relax\iffalse{\fi\let\\=\cr\iffalse}\fi}%
\def\vspace@{\def\vspace##1{\crcr\noalign{\vskip##1\relax}}}%
\def\multilimits@{\bgroup\vspace@\Let@
 \baselineskip\fontdimen10 \scriptfont\tw@
 \advance\baselineskip\fontdimen12 \scriptfont\tw@
 \lineskip\thr@@\fontdimen8 \scriptfont\thr@@
 \lineskiplimit\lineskip
 \vbox\bgroup\ialign\bgroup\hfil$\m@th\scriptstyle{##}$\hfil\crcr}%
\def\Sb{_\multilimits@}%
\def\endSb{\crcr\egroup\egroup\egroup}%
\def\Sp{^\multilimits@}%
\newdimen\ex@
\def\rightarrowfill@#1{$#1\m@th\mathord-\mkern-6mu\cleaders
 \hbox{$#1\mkern-2mu\mathord-\mkern-2mu$}\hfill
 \mkern-6mu\mathord\rightarrow$}%
\def\leftarrowfill@#1{$#1\m@th\mathord\leftarrow\mkern-6mu\cleaders
 \hbox{$#1\mkern-2mu\mathord-\mkern-2mu$}\hfill\mkern-6mu\mathord-$}%
\def\leftrightarrowfill@#1{$#1\m@th\mathord\leftarrow
\mkern-6mu\cleaders
 \hbox{$#1\mkern-2mu\mathord-\mkern-2mu$}\hfill
 \mkern-6mu\mathord\rightarrow$}%
\def\overrightarrow{\mathpalette\overrightarrow@}%
\def\overrightarrow@#1#2{\vbox{\ialign{##\crcr\rightarrowfill@#1\crcr
 \noalign{\kern-\ex@\nointerlineskip}$\m@th\hfil#1#2\hfil$\crcr}}}%
\def\overleftarrow{\mathpalette\overleftarrow@}%
\def\overleftarrow@#1#2{\vbox{\ialign{##\crcr\leftarrowfill@#1\crcr
 \noalign{\kern-\ex@\nointerlineskip}$\m@th\hfil#1#2\hfil$\crcr}}}%
\def\overleftrightarrow{\mathpalette\overleftrightarrow@}%
\def\overleftrightarrow@#1#2{\vbox{\ialign{##\crcr
   \leftrightarrowfill@#1\crcr
 \noalign{\kern-\ex@\nointerlineskip}$\m@th\hfil#1#2\hfil$\crcr}}}%
\def\underrightarrow{\mathpalette\underrightarrow@}%
\def\underrightarrow@#1#2{\vtop{\ialign{##\crcr$\m@th\hfil#1#2\hfil
  $\crcr\noalign{\nointerlineskip}\rightarrowfill@#1\crcr}}}%
\def\underleftarrow{\mathpalette\underleftarrow@}%
\def\underleftarrow@#1#2{\vtop{\ialign{##\crcr$\m@th\hfil#1#2\hfil
  $\crcr\noalign{\nointerlineskip}\leftarrowfill@#1\crcr}}}%
\def\underleftrightarrow{\mathpalette\underleftrightarrow@}%
\def\underleftrightarrow@#1#2{\vtop{\ialign{##\crcr$\m@th
  \hfil#1#2\hfil$\crcr
 \noalign{\nointerlineskip}\leftrightarrowfill@#1\crcr}}}%
\def\qopnamewl@#1{\mathop{\operator@font#1}\nlimits@}
\let\nlimits@\displaylimits
\def\setboxz@h{\setbox\z@\hbox}
\def\varlim@#1#2{\mathop{\vtop{\ialign{##\crcr
 \hfil$#1\m@th\operator@font lim$\hfil\crcr
 \noalign{\nointerlineskip}#2#1\crcr
 \noalign{\nointerlineskip\kern-\ex@}\crcr}}}}
 \def\rightarrowfill@#1{\m@th\setboxz@h{$#1-$}\ht\z@\z@
  $#1\copy\z@\mkern-6mu\cleaders
  \hbox{$#1\mkern-2mu\box\z@\mkern-2mu$}\hfill
  \mkern-6mu\mathord\rightarrow$}
\def\leftarrowfill@#1{\m@th\setboxz@h{$#1-$}\ht\z@\z@
  $#1\mathord\leftarrow\mkern-6mu\cleaders
  \hbox{$#1\mkern-2mu\copy\z@\mkern-2mu$}\hfill
  \mkern-6mu\box\z@$}
\def\projlim{\qopnamewl@{proj\,lim}}
\def\injlim{\qopnamewl@{inj\,lim}}
\def\varinjlim{\mathpalette\varlim@\rightarrowfill@}
\def\varprojlim{\mathpalette\varlim@\leftarrowfill@}
\def\varliminf{\mathpalette\varliminf@{}}
\def\varliminf@#1{\mathop{\underline{\vrule\@depth.2\ex@\@width\z@
   \hbox{$#1\m@th\operator@font lim$}}}}
\def\varlimsup{\mathpalette\varlimsup@{}}
\def\varlimsup@#1{\mathop{\overline
  {\hbox{$#1\m@th\operator@font lim$}}}}
\def\binom#1#2{{#1 \choose #2}}%
\def\align{\@verbatim \frenchspacing\@vobeyspaces \@alignverbatim
You are using the "align" environment in a style in which it is not defined.}
\let\csname endalign*\endcsname =\endtrivlist
\def\alignat{\@verbatim \frenchspacing\@vobeyspaces \@alignatverbatim
You are using the "alignat" environment in a style in which it is not defined.}
\let\csname endalignat*\endcsname =\endtrivlist
\def\xalignat{\@verbatim \frenchspacing\@vobeyspaces \@xalignatverbatim
You are using the "xalignat" environment in a style in which it is not defined.}
\let\csname endxalignat*\endcsname =\endtrivlist
\def\gather{\@verbatim \frenchspacing\@vobeyspaces \@gatherverbatim
You are using the "gather" environment in a style in which it is not defined.}
\let\csname endgather*\endcsname =\endtrivlist
\def\multiline{\@verbatim \frenchspacing\@vobeyspaces \@multilineverbatim
You are using the "multiline" environment in a style in which it is not defined.}
\let\csname endmultiline*\endcsname =\endtrivlist
\def\arrax{\@verbatim \frenchspacing\@vobeyspaces \@arraxverbatim
You are using a type of "array" construct that is only allowed in AmS-LaTeX.}
\def\tabulax{\@verbatim \frenchspacing\@vobeyspaces \@tabulaxverbatim
You are using a type of "tabular" construct that is only allowed in AmS-LaTeX.}
\let\csname endarrax*\endcsname =\endtrivlist
\let\csname endtabulax*\endcsname =\endtrivlist
\def\@@eqncr{\let\@tempa\relax
    \ifcase\@eqcnt \def\@tempa{& & &}\or \def\@tempa{& &}%
      \else \def\@tempa{&}\fi
     \@tempa
     \if@eqnsw
        \iftag@
           \@taggnum
        \else
           \@eqnnum\stepcounter{equation}%
        \fi
     \fi
     \global\tag@false
     \global\@eqnswtrue
     \global\@eqcnt\z@\cr}
 \def\endequation{%
     \ifmmode\ifinner 
      \iftag@
        \addtocounter{equation}{-1} 
        $\hfil
           \displaywidth\linewidth\@taggnum\egroup \endtrivlist
        \global\tag@false
        \global\@ignoretrue   
      \else
        $\hfil
           \displaywidth\linewidth\@eqnnum\egroup \endtrivlist
        \global\tag@false
        \global\@ignoretrue 
      \fi
     \else   
      \iftag@
        \addtocounter{equation}{-1} 
        \eqno \hbox{\@taggnum}
        \global\tag@false%
        $$\global\@ignoretrue
      \else
        \eqno \hbox{\@eqnnum}
        $$\global\@ignoretrue
      \fi
     \fi\fi
 } 
 \newif\iftag@ \tag@false
 \def\tag{\@ifnextchar*{\@tagstar}{\@tag}}
 \def\@tag#1{%
     \global\tag@true
     \global\def\@taggnum{(#1)}}
 \def\@tagstar*#1{%
     \global\tag@true
     \global\def\@taggnum{#1}%
}
\begin{document}
\title[optimal control of fractional in time semilinear PDEs]{{\textsf{A
unified framework for optimal control of fractional in time subdiffusive
semilinear PDEs}}}
\author{Harbir Antil}
\address{H. Antil, The Center for Mathematics and Artificial Intelligence
(CMAI) and Department of Mathematical Sciences, George Mason University,
Fairfax, VA 22030, USA.}
\email{hantil@gmu.edu}
\author{Ciprian G. Gal}
\address{C. G. Gal,Department of Mathematics \& Statistics, Florida
International University, Miami, FL 33199, USA.}
\email{cgal@fiu.edu}
\author{Mahamadi Warma}
\address{M. Warma, The Center for Mathematics and Artificial Intelligence
(CMAI) and Department of Mathematical Sciences, George Mason University,
Fairfax, VA 22030, USA. }
\email{mwarma@gmu.edu}

\begin{abstract}
We consider optimal control of fractional in time (subdiffusive, i.e., for $%
0<\gamma <1$) semilinear parabolic PDEs associated with various notions of
diffusion operators in an unifying fashion. Under general assumptions on the
nonlinearity we{~\textsf{first show}} the existence and regularity of
solutions to the forward and the associated {\textsf{backward (adjoint)}}
problems. In the second part, we prove existence of optimal {\textsf{controls%
}} and characterize the associated {\textsf{first order}} optimality
conditions. Several examples involving fractional in time (and some
fractional in space diffusion) equations are described in detail. The most
challenging obstacle we overcome is the failure of the\ semigroup property
for the semilinear problem in any scaling of (frequency-domain) Hilbert
spaces.
\end{abstract}

\maketitle
\tableofcontents

\section{Introduction}

Optimization problems constrained by partial differential equations (PDEs)
are ubiquitous in science and engineering. Without any specific mention, we
will refer to these problems as optimal control problems. See the monographs 
\cite{FTroeltzsch_2010a, MHinze_RPinnau_MUlbrich_SUlbrich_2009a,
HAntil_DPKouri_MDLacasse_DRidzal_2018a,KIto_KKunisch_2008a} and references
therein for many applications and general results for such problems. In
particular, such optimization problems arise in fluid dynamics,
superconductivity, phase field modeling, regularized variational
inequalities and contact problems, etc. These all are the examples of
optimization problems with parabolic semilinear PDEs as constraints.
Semilinear optimal control problems are known to be a key testbed for
developing new algorithms and/or analysis and there is a significant amount
of literature available on this topic \cite%
{ECasas_1993a,ECasas_1997a,AW-ESAIM, HAntil_DPKouri_DRidzal_2021a}. The
optimization variable (control variable) either acts in the interior
(distributed control), or on the boundary (boundary control), or in the
exterior (exterior control). The first two notions of controls are
well-known, the exterior control is new, see \cite%
{warma2018approximate,HAntil_RKhatri_MWarma_2018a,AVW}.

The goal of this paper is to develop a unified framework for optimal control
problems constrained by fractional in time semilinear PDEs 
\begin{equation}
\left\{ 
\begin{array}{ll}
\partial _{t}^{\gamma }u\left( x,t\right) +Au\left( x,t\right) =f\left(
u\left( x,t\right) \right) +\mathbb{B}z\left( x,t\right) , & \text{in }%
\Omega \times (0,\infty ), \\ 
u\left( \cdot ,0\right) =u_{0}\text{ in }\Omega . & 
\end{array}%
\right.  \label{abs-par-intro}
\end{equation}%
where $A$ is a given linear operator and $f$ is a given nonlinear map which
depends on the unknown PDE solution $u$. {Some examples of $f$ where our
theory directly applies are the following.}

\begin{itemize}
\item \textbf{Allen-Cahn (phase field) equation:} Here $f$ is a cubic type
of nonlinearity $f = -F^{\prime}$ associated with the double-well potential 
\begin{equation*}
F(u) = c_1 u^4 - c_2 u^2, \quad \mbox{for } c_2 > c_1 > 0 .
\end{equation*}

\item \textbf{Subdiffusive Fisher-KPP:} Here $f$ is a logistic term of the
form $r u (1-uK^{-1})$ with $r , K > 0$.

\item \textbf{Subdiffusive Burger's equation:} Here $f(u) := - u \mbox{div}%
(J*u)$, where 
\begin{equation*}
(J*u)(x) = \int_\Omega J(x-y) u(y) dy , \quad x \in \Omega .
\end{equation*}
\end{itemize}

We call $u$ the state variable. The control $z$ enters into the problem in a
linear fashion, but with the help of operator $\mathbb{B}$, it can be in the
interior, on the boundary, or in the exterior. Notice, that this framework
not only allows $A$ to be a standard local operator such as $-\mbox{div}(K
\nabla \cdot )$, but also a nonlocal operator such as a fractional Laplacian $%
(-\Delta)^s$ ($0<s<1$). In addition, the boundary control not only can, be of
Dirichlet, Neumann or Robin type, but also of Wentzell type \cite%
{MWarma_2006a}. Notice, that in the standard case of $\gamma = 1$, there are
several existing results on boundary control of Dirichlet, Neumann or Robin
problems, but there are no existing results on Wentzell type boundary
control under the weaker conditions provided in this paper.

A key novelty of this paper is the presence of fractional in time derivative 
$\partial _{t}^{\gamma }$ in \eqref{abs-par-intro} in the sense of Caputo
(Definition \ref{Caputo-der}). Recently, there has been a considerable
interest in optimal control of fractional PDEs and ODEs, but most of the
results are limited to linear problems or they consider very special
scenarios \cite{OPAgrawal_2008a,HAntil_EOtarola_AJSalgado_2015a,MR4050544}.
Besides, low regularity requirements, the interest in fractional time
derivative stems from its ability in capturing hereditary effects in
materials and anomalous random walks \cite%
{EBarkai_RMetzler_JKlafter_2000a,RMetzler_JKlafter_2004a}. This hereditary
property has also been recently used in designing new deep neural networks 
\cite{HAntil_HCElman_AOnwunta_DVerma_2021a,
HAntil_RKhatri_RLohner_DVerma_2020a}, and gradient based algorithms \cite%
{shin2021caputo}.

The amount of literature on fractional in time derivative is growing due to
many new emerging applications, however many of these results are empirical
observations, and the analytical results are still very limited. It is
well-known that the results from the classical models, i.e., $\gamma =1$, do
not directly extend to the fractional case $\gamma <1$, see the monograph 
\cite{GW} and references therein. However, the results in this monograph do
not apply to our problem as it focuses on the homogeneous case, i.e., $z=0$
and does not consider any optimal control problems. Furthermore, problem (%
\ref{abs-par-intro}) is \emph{ill-posed} (even when $f\equiv 0$) in the
sense that the solution flow is \textbf{not} strongly continuous (near $t=0$%
) in the scale of Hilbert spaces associated with \textbf{any} fractional
order of the (self-adjoint) operator $A$ (see Remark \ref{theta}). Another
technical difficulty is the fact that the formulation of the necessary
optimality conditions for the optimal control problem (\ref{theta}) requires
a rigorous passage of the Caputo-derivative from the forward to the backward
variables, a step which must be carefully analized (and, unfortunately,
omitted quite often in the current literature on optimal control for
subdiffusive parabolic problems). It turns out that the arguments, leading
to the neccesary optimality conditions, need an additional condition of the
behavior of the time derivative of the control variable near the origin. We
emphasize that this is a technical condition which is in fact necessary, and
which we believe, it cannot be discarded as it takes into account the
solution behavior of (\ref{abs-par-intro}) near the time $t=0$. Besides, the
control-to-state mapping for our problem appears to be (Fréchet)
differentiable \textbf{only} under this hypothesis.

The key novelties of this paper are the following.

\begin{itemize}
\item Well posedness of the fractional in time state equation under minimal (%
\textbf{generic})\ assumptions on $f$.

\item Well posedness of the linearized state (adjoint) equation.

\item Existence of solution to the optimal control problem.

\item Lipschitz continuity of the control to state map and its derivative.

\item Rigorous derivation of the first order necessary conditions using the
notion of \textbf{strong}\footnote{%
It remains unclear how this derivation can be performed with a notion of
generalized Caputo derivative (\ref{all-fract}), \textbf{without} the
validity of Proposition \ref{P1}.}\textbf{\ Caputo derivative }(\ref{frac-C}%
).

\item Applications to (subdiffusive) phase-transition phenomena,
(subdiffusive) Fisher-KPP equations and subdiffusive Burger's equation,
subject to nonlocal transport.
\end{itemize}

The remainder of the paper is organized as follows: In section~\ref{sec-2}
we introduce the basic definitions of fractional derivatives, properties of
abstract operator $A$ and collect various other tools that will be needed in
the remainder of the paper. Section~\ref{ss:sem} focuses on the
well-posedness of the forward semilinear problem \eqref{abs-par-intro} under
minimal conditions on $f$ and $z$. This is followed by section~\ref%
{ss:optimal} where we state the optimal control problem. Here we assume that
the final time $T$ is finite and $T<T_{\max },$ for a maximal time $T_{\max
}>0$ where either $T_{\max }=\infty $ (global solution) or $T_{\max }<\infty
,$ and finite time blow-up may occur in some $V_{\alpha }$-norm. Global
well-posedness is briefly touched upon in Section \ref{ss:global} for $V_{1}$%
-solutions. With respect to the optimal control problem, we first establish
the Lipschitz continuity of the control-to-state map and then existence of
solution to the optimal control problem. Next, we show the differentiability
of the control-to-state operator followed by Lipschitz continuity of the
derivative of the control-to-state map. Subsequently, we establish
well-posedness of the linearized state equation and rigorously derive the
first order necessary conditions.  We present several examples of
cost functionals and control problems in section~\ref{ss:csoe} where the
abstract theory can be applied.  Finally,  we provide in Section Appendix \ref{sec:ap} the proofs of most of teh technical results stated in Section \ref{ss:sem}.


\section{The basic functional framework}


\label{sec-2}

Let $Y,Z$ be two Banach spaces endowed with norms $\left\Vert \cdot
\right\Vert _{Y}$ and $\left\Vert \cdot \right\Vert _{Z}$, respectively. We
denote by $Y\hookrightarrow Z$ if $Y\subseteq Z$ and there exists a constant 
$C>0$ such that $\left\Vert u\right\Vert _{Z}\leq C\left\Vert u\right\Vert
_{Y},$ for $u\in Y\subseteq Z.$ In particular, this means that the injection
of $Y$ into $Z$ is continuous. In addition, if $Y$ is dense in $Z$, then we
denote by $Y\overset{d}{\hookrightarrow }Z$, and finally if the injection is
also compact we shall denote it by $Y\overset{c}{\hookrightarrow }Z$. We
denote by $\mathcal{L}(Y,Z)$ the space of all (bounded) linear operators
from $Y$ to $Z$. If $Y=Z$, we let $\mathcal{L}(Y,Z)=\mathcal{L}(Y)$. By the
dual $Y^{\ast }$\ of $Y$, we think of $Y^{\ast }$ as the set of all
(continuous) linear functionals on $Y$. When equipped with the operator norm 
$\left\Vert \cdot \right\Vert _{Y^{\ast }}$, $Y^{\ast }$ is also a Banach
space. We use throughout the notation $h\lesssim g$ to denote $h\leq Cg,$
for some constant $C>0$ when the dependance of the constant $C=C\left(
\gamma ,s,q,p,...\right) $ on some physical parameters is not relevant, and
so it is suppressed.

We give next the notion of fractional-in-time derivative in the sense of
Caputo and Riemann-Liouville. Let $\gamma \in \left( 0,1\right) $ and define%
\begin{equation*}
g_{\gamma }\left( t\right) :=%
\begin{cases}
\displaystyle\frac{t^{\gamma -1}}{\Gamma (\gamma )} & \text{if }t>0, \\ 
0 & \text{if }t\leq 0,%
\end{cases}%
\end{equation*}%
where $\Gamma $ is the usual Gamma function. Let $Y$ be a Banach space which
possesses the Radon-Nikodym property, and let $T>0.$

\begin{definition}[\textbf{{\textsf{Strong Caputo fractional derivative}}}]
\label{Caputo-der}Let $u\in W^{1,1}\left( (0,T);Y\right) .$ The (strong)
Caputo fractional derivative of order $\gamma \in \left( 0,1\right) $ is
given by%
\begin{equation}
_{C}\partial _{t}^{\gamma }u\left( t\right) :=\int_{0}^{t}g_{1-\gamma
}\left( t-\tau \right) u^{^{\prime }}\left( \tau \right) d\tau =(g_{1-\gamma
}\ast u^{^{\prime }})\left( t\right) ,  \label{frac-C}
\end{equation}%
for all $t\in (0,T].$
\end{definition}

\begin{definition}[\textbf{{\textsf{(Left) Riemann-Liouville fractional
derivative}}}]
Let $u\in C\left( [0,T];Y\right) $ be such that $g_{1-\gamma }\ast u\in
W^{1,1}\left( (0,T);Y\right) $. The (left)\ Riemann-Liouville fractional
derivative of order $\gamma \in \left( 0,1\right) $ is given by 
\begin{equation*}
D_{t}^{\gamma }u\left( t\right) :=\frac{d}{dt}\Big(g_{1-\gamma }\ast u\Big)%
(t)=\frac{d}{dt}\int_{0}^{t}g_{1-\gamma }\left( t-\tau \right) u\left( \tau
\right) d\tau ,
\end{equation*}%
for almost all $t\in (0,T).$
\end{definition}

We next set%
\begin{equation}
\partial _{t}^{\gamma }u:=D_{t}^{\gamma }\left( u-u\left( 0\right) \right) ,
\label{all-fract}
\end{equation}%
and recall that the right-hand side of (\ref{all-fract}) is usually dubbed
in the literature as a \textbf{generalized Caputo derivative }(see, e.g., 
\cite{GW})\textbf{. }We observe that the notions of fractional derivatives
in (\ref{all-fract}) and (\ref{frac-C}), respectively, are in fact
equivalent under the Radon-Nikodym property.

\begin{proposition}
\label{P1}Let the assumptions of Definition \ref{Caputo-der} be satisfied.
Then%
\begin{equation*}
_{C}\partial _{t}^{\gamma }u\left( t\right) =\partial _{t}(g_{1-\gamma }\ast
\left( u-u\left( 0\right) \right) )\left( t\right) =D_{t}^{\gamma }\left(
u-u\left( 0\right) \right) \left( t\right) ,
\end{equation*}%
for almost all $t\in (0,T],$ and $g_{1-\gamma }\ast (u-u\left( 0\right) )\in
W^{1,1}\left( (0,T);Y\right) .$
\end{proposition}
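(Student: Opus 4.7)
The plan is to reduce the claim to a vector-valued Fubini interchange between the scalar kernel $g_{1-\gamma}$ and the Bochner-integrable derivative $u^{\prime}$. First, I would invoke the Radon--Nikod\'ym property of $Y$ together with the hypothesis $u \in W^{1,1}((0,T);Y)$ to identify $u$, up to a null set, with its absolutely continuous representative, so that
\begin{equation*}
u(t) = u(0) + \int_0^t u^{\prime}(\tau)\, d\tau, \qquad \text{for a.e. } t \in (0,T).
\end{equation*}
Setting $v := u - u(0)$, one then has $v(0) = 0$, $v \in W^{1,1}((0,T);Y)$, and $v^{\prime} = u^{\prime}$ a.e.

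Next I would introduce the candidate derivative
\begin{equation*}
w(t) := (g_{1-\gamma} \ast u^{\prime})(t) = \int_0^t g_{1-\gamma}(t-\tau)\, u^{\prime}(\tau)\, d\tau, \qquad t \in (0,T],
\end{equation*}
which is exactly ${}_C\partial_t^{\gamma}u(t)$ by Definition \ref{Caputo-der}. Since $g_{1-\gamma} \in L^1(0,T)$ and $u^{\prime} \in L^1((0,T);Y)$, Young's convolution inequality in the Bochner setting gives $w \in L^1((0,T);Y)$.

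The heart of the argument is the identity
\begin{equation*}
\int_0^t w(s)\, ds = (g_{1-\gamma} \ast v)(t), \qquad t \in [0,T].
\end{equation*}
To prove it, I would apply the vector-valued Fubini theorem to both sides and show that each equals $(g_{2-\gamma} \ast u^{\prime})(t)$; the key scalar computation is
\begin{equation*}
\int_0^{\sigma} g_{1-\gamma}(r)\, dr = g_{2-\gamma}(\sigma), \qquad \sigma > 0,
\end{equation*}
combined with $v(\tau) = \int_0^{\tau} u^{\prime}(s)\, ds$. Once this identity is in hand, $g_{1-\gamma} \ast v$ is the primitive of the Bochner $L^1$ map $w$, hence lies in $W^{1,1}((0,T);Y)$ with weak derivative equal to $w$. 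Consequently $\partial_t(g_{1-\gamma} \ast v) = {}_C\partial_t^{\gamma}u$ a.e. on $(0,T]$, and by the very definition of $D_t^{\gamma}$ this same quantity coincides with $D_t^{\gamma}(u - u(0))$, closing the chain of equalities in the statement.

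The only genuinely delicate point is the vector-valued Fubini step: one must verify joint strong measurability and Bochner integrability of the maps $(s,\tau) \mapsto g_{1-\gamma}(s-\tau)\,u^{\prime}(\tau)$ and $(\tau,s) \mapsto g_{1-\gamma}(t-\tau)\,u^{\prime}(s)$ on the relevant triangles in $[0,T]^2$. Both follow from Bochner's characterization and the $L^1$ bounds above; it is precisely the Radon--Nikod\'ym hypothesis on $Y$ that legitimizes the fundamental theorem of calculus representation of $u$ and the application of the Bochner--Fubini theorem in this generality.
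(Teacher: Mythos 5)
Your proposal is correct, and it takes a genuinely different (and more self-contained) route than the paper. The paper's own proof simply records that the Radon--Nikod\'ym property turns $u\in W^{1,1}((0,T);Y)$ into an absolutely continuous function, and then defers the whole identity to a cited scalar result (\cite[Theorem 3.1]{D} for $Y=\mathbb{R}$), asserting that the Banach-valued case follows ``with obvious modifications.'' You instead prove the identity directly: after using the RNP to write $u(t)=u(0)+\int_0^t u'(\tau)\,d\tau$, you set $w:=g_{1-\gamma}\ast u'={}_C\partial_t^{\gamma}u$, note $w\in L^1((0,T);Y)$ by Young's inequality, and establish via the Bochner--Fubini theorem that both $\int_0^t w(s)\,ds$ and $(g_{1-\gamma}\ast(u-u(0)))(t)$ equal $(g_{2-\gamma}\ast u')(t)$, using the kernel identity $\int_0^{\sigma}g_{1-\gamma}(r)\,dr=g_{2-\gamma}(\sigma)$ (i.e., the semigroup property $g_1\ast g_{1-\gamma}=g_{2-\gamma}$). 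This exhibits $g_{1-\gamma}\ast(u-u(0))$ as the indefinite Bochner integral of $w$, which simultaneously yields the $W^{1,1}$ membership and the a.e.\ equality of all three expressions. What your approach buys is that the ``obvious modifications'' are actually carried out: the only vector-valued ingredients needed are Young's inequality, Fubini for Bochner integrals, and a.e.\ differentiability of indefinite Bochner integrals, all of which you correctly identify and which hold in the stated generality. What the paper's approach buys is brevity and a precise pointer to the scalar literature. One cosmetic remark: a.e.\ differentiability of $t\mapsto\int_0^t w(s)\,ds$ for $w\in L^1((0,T);Y)$ holds in any Banach space; the RNP is genuinely needed only for the fundamental-theorem-of-calculus representation of $u$ itself, exactly as you use it in your first step.
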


\begin{proof}
We note that $\left\Vert u\left( 0,\cdot\right) \right\Vert _{Y}<\infty $
since each $u\in W^{1,1}\left( (0,T);Y\right) $ is also continuous on $\left[
0,T\right] $ with values in $Y.$ As a consequence of the Radon-Nikodym
theorem (see \cite{DU77}), each function that belongs to $W^{1,1}\left(
[0,T];Y\right) $ is also absolutely continuous on $\left[ 0,T\right] $
(modulo a null set of Lebesgue measure) with values in $Y$. Then the
conclusion follows from a standard result in \cite[Theorem 3.1]{D} in the
case $Y=\mathbb{R}$ (the proof in the case of a general Banach space $Y$
follows with some, albeit, obvious modifications).
\end{proof}

\begin{definition}[\textbf{{\textsf{(Right) Riemann-Liouville fractional
derivative}}}]
The (right) Riemann-Liouville fractional derivative of order $\gamma \in
\left( 0,1\right) $ is defined by%
\begin{equation}
\partial _{t,T}^{\gamma }u(t)=-\frac{d}{dt}(I_{t,T}^{1-\gamma }u)(t),
\label{D-r}
\end{equation}%
where%
\begin{equation*}
I_{t,T}^{\gamma }u\left( t\right) :=\frac{1}{\Gamma (\gamma )}%
\int_{t}^{T}(\tau -t)^{\gamma -1}u(\tau )d\tau
\end{equation*}%
is the right Riemann-Liouville fractional integral of order $\gamma $. The
left Riemann-Liouville fractional integral of order $\gamma \in \left(
0,1\right) $ is given by 
\begin{equation*}
I_{0,t}^{\gamma }u(t)=\frac{1}{\Gamma (\gamma )}\int_{0}^{t}(t-\tau
)^{\gamma -1}u(\tau )d\tau .
\end{equation*}
\end{definition}

From (\ref{D-r}), we observe that if $u$ is differentiable, then $\partial
_{t,T}^{1}u=-\partial _{t}u$. We will employ the following well-known result
(see, e.g., \cite{Agr})\ to determine the corresponding adjoint problem
associated with the initial boundary value problem, that we have set up in
Section \ref{ss:optimal}.

\begin{proposition}
\label{ibpf}Under the assumptions of Proposition \ref{P1}, the following 
\emph{integration by parts formula} holds:%
\begin{equation}
\int_{0}^{T}v\left( t\right) _{C}\partial _{t}^{\gamma
}u(t)dt=\int_{0}^{T}\partial _{t,T}^{\gamma }v(t)u(t)dt+\left[
(I_{t,T}^{1-\gamma }v)\left( t\right) u(t)\right] _{t=0}^{t=T},
\label{bpf-eq}
\end{equation}%
provided that the left and right-hand sides expressions make sense (i.e.,
the products inside the integrals are well-defined, at least in a duality
sense $\left\langle \cdot ,\cdot \right\rangle _{Y,Y^{\ast }}$). Note that
if $u\left( 0\right) =(I_{t,T}^{1-\gamma }v)\left( T\right) =0,$ the bracket
term in (\ref{bpf-eq}) vanishes.
\end{proposition}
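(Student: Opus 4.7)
The plan is to combine Fubini's theorem for Bochner integrals with the classical integration-by-parts formula for absolutely continuous vector-valued functions. First, I would expand the left-hand side via the defining convolution (\ref{frac-C}):
\[
\int_{0}^{T} v(t)\,{}_C\partial_{t}^{\gamma} u(t)\,dt
= \int_{0}^{T}\!\!\int_{0}^{t} v(t)\,g_{1-\gamma}(t-\tau)\,u'(\tau)\,d\tau\,dt,
\]
where the domain of integration is the triangle $\{0\le \tau \le t \le T\}$. The assumption that the integrals in (\ref{bpf-eq}) make sense — together with $g_{1-\gamma}\in L^1(0,T)$, $u'\in L^1((0,T);Y)$ from Proposition \ref{P1}, and the implied duality-pairing integrability of $v$ — justifies swapping the order of integration. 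The inner integral is then recognized as the right Riemann--Liouville fractional integral of $v$:
\[
\int_{\tau}^{T} g_{1-\gamma}(t-\tau)\, v(t)\,dt = (I_{\tau,T}^{1-\gamma} v)(\tau),
\]
so that the left-hand side equals $\displaystyle\int_{0}^{T} u'(\tau)\,(I_{\tau,T}^{1-\gamma} v)(\tau)\,d\tau$.

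Next, I would apply the ordinary integration-by-parts formula for absolutely continuous $Y$-valued functions. By Proposition \ref{P1} (and the Radon--Nikodym property), $u$ is absolutely continuous on $[0,T]$; on the other side, the assumptions embedded in Definition \ref{D-r} give that $I_{\cdot,T}^{1-\gamma} v$ is sufficiently regular (differentiable a.e., with $\partial_{\tau,T}^{\gamma}v$ locally integrable as a duality-valued function) to make the classical formula
\[
\int_{0}^{T} u'(\tau)\,(I_{\tau,T}^{1-\gamma} v)(\tau)\,d\tau
= \Bigl[(I_{\tau,T}^{1-\gamma} v)(\tau)\,u(\tau)\Bigr]_{\tau=0}^{\tau=T}
- \int_{0}^{T} u(\tau)\,\frac{d}{d\tau}(I_{\tau,T}^{1-\gamma} v)(\tau)\,d\tau
\]
valid. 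Invoking (\ref{D-r}), $\frac{d}{d\tau}(I_{\tau,T}^{1-\gamma} v)(\tau) = -\partial_{\tau,T}^{\gamma}v(\tau)$, converts the last integral into $+\int_{0}^{T} u(\tau)\,\partial_{\tau,T}^{\gamma}v(\tau)\,d\tau$, giving (\ref{bpf-eq}). The final remark on the vanishing of the boundary term is then immediate from the continuity of $u$ at $\tau=0$ and the imposed condition at $\tau=T$.

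The main obstacle will be the rigorous justification of the Fubini swap: the kernel $g_{1-\gamma}(t-\tau)$ has an integrable but non-bounded singularity on the diagonal, and the integrand is a duality pairing $\langle v(t), u'(\tau)\rangle_{Y^{\ast},Y}$ rather than a scalar product. The standard route is to first establish the identity on a dense subset of smooth $u,v$ (where absolute integrability is automatic), then extend by density using Young's convolution inequality $\|g_{1-\gamma}\ast u'\|_{L^{1}(0,T;Y)}\lesssim \|g_{1-\gamma}\|_{L^{1}}\|u'\|_{L^{1}(0,T;Y)}$. For the integration by parts, the analogous density approximation (or direct invocation of the result cited in \cite{Agr}, which already handles this setting) reduces everything to the scalar case, so beyond the Fubini step, the argument follows well-trodden lines.
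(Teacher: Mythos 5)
Your argument is correct and is exactly the standard derivation of the fractional integration-by-parts formula: the paper does not prove Proposition \ref{ibpf} itself but simply cites it from the literature (\cite{Agr}), and the proof behind that citation is precisely your Fubini swap over the triangle $\{0\le\tau\le t\le T\}$, identification of the inner integral with $(I_{\tau,T}^{1-\gamma}v)(\tau)$, and classical integration by parts combined with $\frac{d}{d\tau}(I_{\tau,T}^{1-\gamma}v)(\tau)=-\partial_{\tau,T}^{\gamma}v(\tau)$. Your closing caveats about justifying Fubini for the singular kernel in the duality pairing, and about the regularity of $I_{\cdot,T}^{1-\gamma}v$ needed for the classical formula, are exactly the points the paper addresses in the remark following the proposition via the sufficient condition $u\in W^{1,p}((0,T);Y^{\ast})$, $v\in W^{1,q}((0,T);Y)$ with $p^{-1}+q^{-1}\le 2-\gamma$.
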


\begin{remark}
\emph{The integral expressions in (\ref{bpf-eq}) are well defined (cf. \cite[%
pg. 76]{SKM}, owing to $\partial _{t,T}^{\gamma }v=I_{t,T}^{1-\gamma
}(\partial _{t}v)$ and $_{C}\partial _{t}^{\gamma }u=I_{0,t}^{1-\gamma
}\left( \partial _{t}u\right) $), for instance, when $u\in W^{1,p}\left(
(0,T);Y^{\ast }\right) $ and $v\in W^{1,q}\left(( 0,T);Y\right) ,$ with $%
p,q\geq 1$ and $p^{-1}+q^{-1}\leq 2-\gamma $ ($p\neq 1$ and $q\neq 1$ when $%
p^{-1}+q^{-1}=2-\gamma $). }
\end{remark}

We recall next the following Gronwall type inequality from \cite[Lemma 6.3]%
{EL}.

\begin{lemma}
\label{A2}Let the function $\varphi (t)\geq 0$ be continuous for $0<t\leq T$%
. If 
\begin{equation*}
\varphi \left( t\right) \leq C_{1}t^{\alpha -1}+C_{2}\int_{0}^{t}\left(
t-s\right) ^{\beta -1}\varphi \left( s\right) ds,\text{ }0<t\leq T,
\end{equation*}%
for some constants $C_{1},C_{2}\geq 0$ and $\alpha ,\beta >0,$ then there is
a positive constant $C_{\ast }=C_{\ast }\left( \alpha ,\beta ,T,C_{2}\right) 
$ such that\footnote{%
A crucial part of this estimate that we shall exploit repeteadly is that the
final exponent in (\ref{gl}) is independent of $\beta .$}%
\begin{equation}
\varphi \left( t\right) \leq C_{\ast }C_{1}t^{\alpha -1},\text{ }0<t\leq T.
\label{gl}
\end{equation}
\end{lemma}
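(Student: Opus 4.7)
The plan is to iterate the integral inequality Picard-style and to sum the resulting series, which turns out to be a two-parameter Mittag--Leffler function, then to conclude by the boundedness of that entire function on compact sets. The argument is classical for Volterra equations with a weakly singular kernel, and the bookkeeping task amounts to controlling the Gamma-function coefficients produced on each iteration.

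First I would introduce the positive linear operator $(L\psi)(t):=C_{2}\int_{0}^{t}(t-s)^{\beta-1}\psi(s)\,ds$, so that the hypothesis reads $\varphi\le h+L\varphi$ with $h(t):=C_{1}t^{\alpha-1}$. Applying $L$ repeatedly and using monotonicity (the kernel is nonnegative and $\varphi\ge 0$) gives, for every $n\in\mathbb{N}$,
\begin{equation*}
\varphi(t)\;\le\;\sum_{k=0}^{n-1}(L^{k}h)(t)\;+\;(L^{n}\varphi)(t).
\end{equation*}

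Next, by induction on $k$ using the Beta-integral identity $\int_{0}^{t}(t-s)^{\beta-1}s^{\mu-1}\,ds=\mathrm{B}(\beta,\mu)\,t^{\beta+\mu-1}$, one obtains in closed form
\begin{equation*}
(L^{k}h)(t)\;=\;C_{1}\,\Gamma(\alpha)\,\frac{\bigl(C_{2}\Gamma(\beta)\bigr)^{k}}{\Gamma(\alpha+k\beta)}\,t^{\alpha+k\beta-1}.
\end{equation*}
By Stirling's formula, the prefactors $\bigl(C_{2}\Gamma(\beta)T^{\beta}\bigr)^{k}/\Gamma(\alpha+k\beta)$ decay super-exponentially in $k$, so the series $\sum_{k\ge 0}(L^{k}h)(t)$ converges uniformly on $(0,T]$. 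A parallel induction, feeding the pointwise bound $\varphi\le h+L\varphi$ back into each further application of $L$, shows that $(L^{n}\varphi)(t)$ is dominated by the tail of the same series and hence tends to $0$ as $n\to\infty$. Summing then yields
\begin{equation*}
\varphi(t)\;\le\;C_{1}\,\Gamma(\alpha)\,t^{\alpha-1}\,E_{\beta,\alpha}\!\bigl(C_{2}\Gamma(\beta)\,t^{\beta}\bigr),
\end{equation*}
where $E_{\beta,\alpha}(z):=\sum_{k\ge 0}z^{k}/\Gamma(\alpha+k\beta)$ is the two-parameter Mittag--Leffler function, which is entire and thus bounded on $[0,C_{2}\Gamma(\beta)T^{\beta}]$. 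Absorbing that bound into a single constant $C_{\ast}=C_{\ast}(\alpha,\beta,T,C_{2})$ yields the claim \eqref{gl}.

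The delicate step will be justifying the vanishing of the remainder $(L^{n}\varphi)(t)$, since $\varphi$ is only assumed continuous on the open interval $(0,T]$ and is a priori allowed to blow up like $t^{\alpha-1}$ at the origin; one has to re-insert the hypothesized bound on $\varphi$ at each step of the induction so that the tail of the Mittag--Leffler series controls the remainder uniformly. It is worth noting that the exponent $\alpha-1$ in the leading term is preserved throughout the iteration, which is exactly the footnote's observation that the final exponent in \eqref{gl} is independent of $\beta$.
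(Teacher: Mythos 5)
Your proposal is correct and is essentially the classical iteration proof of the weakly singular (Henry-type) Gronwall inequality; the paper does not prove this lemma itself but simply recalls it from \cite[Lemma 6.3]{EL}, whose argument is the same Picard iteration summing to the two-parameter Mittag--Leffler function. The only point to make fully precise is the vanishing of the remainder $(L^{n}\varphi)(t)$: once one observes that the hypothesis forces $\varphi\in L^{1}(0,T)$ (the convolution integral must be finite for the inequality to carry information), the crude bound $(L^{n}\varphi)(t)\le C_{2}^{n}\Gamma(\beta)^{n}\Gamma(n\beta)^{-1}\max(1,T^{n\beta-1})\|\varphi\|_{L^{1}(0,T)}\to 0$ closes the argument, which is a cleaner route than re-inserting the hypothesis at each step.
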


We shall now assume $\left( \Omega ,g\right) $ is a ($n$-dimensional)
Riemannian manifold and $g$ a complete Riemannian metric on $\Omega $, which
is at least of Lipschitz class (i.e., $\left\Vert g\right\Vert _{\text{Lip}%
}<\infty $).

\begin{enumerate}
\item[(HA)] In that case, let $A$ be a strictly positive\footnote{%
The strict positivity of the operator is generally not required, and one can
assume instead that $A\geq 0.$ Indeed, one can replace $A$ by $I+A$ by
adding the identity on both sides of equation (\ref{abs-par}). The
assumptions on $f$ hold for the modified nonlinearity as well.} self-adjoint
(unbounded) operator in $L^{2}(\Omega )$ (i.e., $0\in \rho \left( A\right) $%
) whose resolvent $\left( I+A\right) ^{-1}$ is compact\footnote{$L^{2}\left(
\Omega \right) $ is a locally compact since $\Omega $ is complete.} in $%
L^{2}\left( \Omega \right) $.
\end{enumerate}

By the spectral theory, $A$ has its eigenvalues forming a non-decreasing
sequence of real numbers $0<\lambda _{1}\leq \lambda _{2}\leq \cdots \leq
\lambda _{n}\leq \cdots $ satisfying $\lim_{n\rightarrow \infty }\lambda
_{n}=\infty $. In addition, the eigenvalues are of finite multiplicity. Let $%
(\varphi _{n})_{n\in {\mathbb{N}}}$ be the orthonormal basis of
eigenfunctions associated with $(\lambda _{n})_{n\in {\mathbb{N}}}$. Then $%
\varphi _{n}\in D(A)$ for every $n\in {\mathbb{N}}$, $(\varphi _{n})_{n\in {%
\mathbb{N}}}$ satisfies $A\varphi _{n}=\lambda _{n}\varphi _{n}$. We shall
denote by $D(A)^{\star }$ the dual of $D(A)$ with respect to the pivot space 
$L^{2}(\Omega )$ so that we have the continuous embedding $%
D(A)\overset{c}{\hookrightarrow } L^{2}(\Omega )\overset{c}{\hookrightarrow }D(A)^{\star }$. Spectral
theory also allows us to define the spaces $V_{\alpha }=D(A^{\alpha /2})$
with norms 
\begin{equation}
\left\vert u\right\vert _{\alpha }:=||A^{\alpha /2}u||_{L^{2}\left( \Omega
\right) },\text{ for any real }\alpha .  \label{fractional-p0}
\end{equation}%
A simple argument shows that \thinspace $D(A^{-\alpha /2})=V_{-\alpha },$
for $\alpha \geq 0$. Indeed, by the standard spectral theory, fractional
powers of $A$ can be defined by means of%
\begin{equation}  \label{fractional-p}
\begin{cases}
\displaystyle A^{\alpha /2}u & =\displaystyle\sum_{n=1}^{\infty }\lambda
_{n}^{\alpha /2}\left( u,\varphi _{n}\right) _{L^{2}(\Omega)}\varphi _{n},%
\text{ } \\ 
\displaystyle D(A^{\alpha /2}) & =\left\{ u\in L^{2}\left( \Omega \right)
:\left\Vert A^{\alpha /2}u\right\Vert _{L^{2}\left( \Omega \right) }^{2}=%
\displaystyle\sum_{n=1}^{\infty }\lambda _{n}^{\alpha }\left\vert \left(
u,\varphi _{n}\right) _{L^{2}(\Omega)}\right\vert ^{2}<\infty \right\} .%
\end{cases}%
\end{equation}%
In particular, there holds $V_{1}=D(A^{1/2})$ and $V_{-1}=V_{1}^{\ast }$
such that $\left\vert u\right\vert _{1}\simeq \left(
A^{1/2}u,A^{1/2}u\right) _{L^{2}(\Omega)}$ and $\left\vert u\right\vert
_{-1}\simeq \left( A^{-1}u,u\right) _{L^{2}}$ (in the sense of equivalent
norms), respectively. A complete characterization of the spaces $V_{\alpha }$
and their embedding properties into $L^{p}\left( \Omega \right) $-spaces ($%
p\geq 1$) can be found in \cite{AGW} (see also \cite{GM}).

Next, we recall the definition of the Wright type (also sometimes called the
Mainardi)\ function (see \cite{GW} and the references therein),%
\begin{equation}
\Phi _{\gamma }(z):=\sum_{n=0}^{\infty }\frac{(-z)^{n}}{n!\Gamma (-\gamma
n+1-\gamma )},\;\;0<\gamma <1,\;\;z\in \mathbb{C}.  \label{wright}
\end{equation}%
It is well known that $\Phi _{\gamma }(t)$ is a probability density
function, namely,%
\begin{equation*}
\Phi _{\gamma }(t)\geq 0,\quad t>0;\quad \int_{0}^{\infty }\Phi _{\gamma
}(t)dt=1.
\end{equation*}%
Furthermore, $\Phi _{\gamma }(0)=1/\Gamma \left( 1-\gamma \right) $ and%
\begin{equation}
\int_{0}^{\infty }t^{p}\Phi _{\gamma }(t)dt=\frac{\Gamma (p+1)}{\Gamma
(\gamma p+1)},\;\;p>-1,\;\;0<\gamma <1.  \label{moment}
\end{equation}

We let $\left( T\left( t\right) \right) _{t\geq 0}$ denote the analytic
semigroup on $L^{2}(\Omega )$ generated by the operator $-A$, and consider
an extension of $T$ (which we still denote by $T$, for the simplicity of
notation) on all scales of negative fractional order spaces $V_{\alpha },$ $%
\alpha <0$. Next, we define two additional operators%
\begin{equation*}
S_{\gamma }(t):V_{\alpha }\rightarrow V_{\alpha },\text{ }P_{\gamma
}(t):V_{\alpha }\rightarrow V_{\alpha },\text{ }\alpha \in \left[ -2,2\right]
,
\end{equation*}%
by%
\begin{equation}
\begin{cases}
\displaystyle S_{\gamma }(t)v:=\int_{0}^{\infty }\Phi _{\gamma }(\tau
)T(\tau t^{\gamma })vd\tau , \\ 
\displaystyle P_{\gamma }(t)v:=\gamma t^{\gamma -1}\int_{0}^{\infty }\tau
\Phi _{\gamma }(\tau )T(\tau t^{\gamma })vd\tau .%
\end{cases}
\label{op-SP}
\end{equation}

We next recall the definition of the Mittag-Leffler function%
\begin{equation*}
E_{\alpha ,\beta }(z):=\sum_{n=0}^{\infty }\frac{z^{n}}{\Gamma (\alpha
n+\beta )},\;\;\alpha >0,\;\beta \in {\mathbb{C}},\quad z\in {\mathbb{C}}.
\end{equation*}%
A particular estimate for $E_{\alpha ,\beta }$ that we shall often use in
the paper (and whenever necessary) is%
\begin{equation}
\left\vert E_{\alpha ,\beta }\left( x\right) \right\vert \leq C_{\alpha
,\beta }\left( 1-x\right) ^{-1},\text{ \ for }x\leq 0\text{ and }\alpha \in
\left( 0,1\right) ,\text{ }\beta >0.  \label{ML-bound}
\end{equation}%
It is also well-known that $E_{\alpha ,\beta }(z)$ is an entire function,
and that both operators in (\ref{op-SP}) can be also cast in terms of these
functions (see, for instance, \cite[Theorem 4.2]{MN}). In particular, by the
spectral theory we further have%
\begin{equation}
\left\{ 
\begin{array}{ll}
\displaystyle S_{\gamma }(t)v=\sum_{n=0}^{\infty }\left( v,\varphi
_{n}\right) _{L^{2}(\Omega)}E_{\gamma ,1}\left( -\lambda _{n}t^{\gamma
}\right) \varphi _{n}, &  \\ 
&  \\ 
\displaystyle P_{\gamma }(t)v=\sum_{n=0}^{\infty }\left( v,\varphi
_{n}\right) _{L^{2}(\Omega)}t^{\gamma -1}E_{\gamma ,\gamma }\left( -\lambda
_{n}t^{\gamma }\right) \varphi _{n}. & 
\end{array}%
\right.  \label{op-SP2}
\end{equation}

\begin{proposition}
\label{est-SP}The operator families $\left\{ S_{\gamma }\left( t\right)
\right\} ,$ $\left\{ P_{\gamma }\left( t\right) \right\} $ are analytic for $%
t>0,$ and satisfy the following estimates:%
\begin{equation}
\left\vert S_{\gamma }\left( t\right) v\right\vert _{\beta }\leq C_{\alpha
,\beta ,\gamma }t^{-\gamma \left( \frac{\beta -\alpha }{2}\right)
}\left\vert v\right\vert _{\alpha },\text{ }-2\leq \alpha \leq \beta \leq 2,
\label{s-1}
\end{equation}%
with $\beta -\alpha \in \lbrack 0,2],$ and%
\begin{equation}
\left\vert P_{\gamma }\left( t\right) v\right\vert _{\widetilde{\beta }}\leq
C_{\widetilde{\alpha },\widetilde{\beta },\gamma }t^{\gamma -1-\gamma \left( 
\frac{\widetilde{\beta }-\widetilde{\alpha }}{2}\right) }\left\vert
v\right\vert _{\widetilde{\alpha }},\text{ }-2\leq \widetilde{\alpha }\leq 
\widetilde{\beta }\leq 2.  \label{s-2}
\end{equation}%
The positive constants $C_{\alpha ,\beta ,\gamma },C_{\widetilde{\alpha },%
\widetilde{\beta },\gamma }$ are independent of $t$ and $v,$ and are bounded
as $\gamma \rightarrow 1^{-}.$ Finally, the operator $S_{\gamma }$ is also a
contraction (strongly continuous)\ mapping from $V_{\alpha }\rightarrow
V_{\alpha }.$
\end{proposition}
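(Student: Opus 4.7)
The proof proceeds on the basis of the spectral representation (\ref{op-SP2}), which converts operator norm bounds in the $V_\alpha$ scale into scalar bounds on the Mittag--Leffler multipliers $E_{\gamma,1}(-\lambda_n t^\gamma)$ and $t^{\gamma-1} E_{\gamma,\gamma}(-\lambda_n t^\gamma)$. This is more robust than trying to use the integral representation (\ref{op-SP}) together with the Wright--moment formula (\ref{moment}), because the latter forces the condition $\beta-\alpha<2$ via the divergence of $\int_0^\infty \tau^{-(\beta-\alpha)/2}\Phi_\gamma(\tau)d\tau$ at the endpoint, whereas our statement permits $\beta-\alpha=2$.

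For (\ref{s-1}), I would write
\begin{equation*}
|S_\gamma(t)v|_\beta^2 \;=\; \sum_{n=1}^\infty \lambda_n^\beta \,|(v,\varphi_n)_{L^2}|^2 \,\bigl|E_{\gamma,1}(-\lambda_n t^\gamma)\bigr|^2
\end{equation*}
and apply (\ref{ML-bound}) with $\alpha=\gamma$, $\beta=1$ to get $|E_{\gamma,1}(-\lambda_n t^\gamma)|\leq C_\gamma(1+\lambda_n t^\gamma)^{-1}$. Substituting $x_n:=\lambda_n t^\gamma$, one factors
\begin{equation*}
\lambda_n^{\beta-\alpha}\,\bigl(1+\lambda_n t^\gamma\bigr)^{-2}
\;=\; t^{-\gamma(\beta-\alpha)}\,\frac{x_n^{\beta-\alpha}}{(1+x_n)^2},
\end{equation*}
and the sup $\sup_{x>0} x^{\beta-\alpha}(1+x)^{-2}$ is finite precisely when $0\leq \beta-\alpha\leq 2$. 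Pulling this uniform bound out yields
\begin{equation*}
|S_\gamma(t)v|_\beta^2 \;\lesssim\; t^{-\gamma(\beta-\alpha)}\sum_{n=1}^\infty \lambda_n^\alpha |(v,\varphi_n)_{L^2}|^2 \;=\; t^{-\gamma(\beta-\alpha)}|v|_\alpha^2,
\end{equation*}
which is (\ref{s-1}). Estimate (\ref{s-2}) is analogous: the extra factor $t^{\gamma-1}$ pulls through, and (\ref{ML-bound}) is applied with parameters $(\gamma,\gamma)$ instead, giving the advertised exponent $\gamma-1-\gamma(\widetilde\beta-\widetilde\alpha)/2$.

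The contraction property of $S_\gamma$ on $V_\alpha$ is the special case $\beta=\alpha$: the elementary bound $0\leq E_{\gamma,1}(-\lambda_n t^\gamma)\leq 1$ (valid for all $\lambda_n>0$, $t\geq 0$, $\gamma\in(0,1)$) gives $|S_\gamma(t)v|_\alpha\leq |v|_\alpha$ termwise. Strong continuity at every $t\geq 0$ then follows from the dominated convergence theorem in the spectral sum, using the same pointwise bound as majorant and the continuity $t\mapsto E_{\gamma,1}(-\lambda_n t^\gamma)$ with $E_{\gamma,1}(0)=1$. Analyticity for $t>0$ is inherited from the analyticity of the semigroup $T(\cdot)$ through the integral representation (\ref{op-SP}) (one differentiates under the integral sign, using the Wright density to control the tails), or equivalently from termwise differentiation of (\ref{op-SP2}) combined with the uniform bounds just obtained to justify interchanging sum and derivative on compact subsets of $(0,\infty)$.

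The main technical obstacle is the endpoint $\beta-\alpha=2$, which is exactly where the integral/Wright-function approach would break down, and which forces the use of the spectral representation route above. A minor but important point is to track that the constants $C_{\alpha,\beta,\gamma},C_{\widetilde\alpha,\widetilde\beta,\gamma}$ stay bounded as $\gamma\to 1^-$: this reduces to uniformity of the prefactor in (\ref{ML-bound}) as $\gamma\to 1^-$ (which is classical, since $E_{1,1}(x)=e^x$ and $E_{1,1}(x)(1-x)$ is bounded for $x\leq 0$), together with the $\gamma$-independence of the supremum $\sup_{x>0} x^{\beta-\alpha}(1+x)^{-2}$ that governs the spectral calculation.
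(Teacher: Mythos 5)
Your proof is correct and follows essentially the same route the paper indicates: the paper's own (very terse) proof explicitly offers the combination of \eqref{ML-bound} with the spectral representation \eqref{op-SP2} as one of two ways to obtain \eqref{s-1}--\eqref{s-2}, and you have simply carried out that computation in full, together with the standard subordination/dominated-convergence arguments for contractivity, strong continuity, and analyticity. Your observation that the Wright-moment route via \eqref{moment} only covers $\beta-\alpha<2$ for $S_{\gamma}$, so that the endpoint $\beta-\alpha=2$ genuinely requires the Mittag--Leffler bound, is a correct and worthwhile refinement of what the paper leaves implicit.
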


\begin{proof}
The analyticity of the semigroup $T\left( t\right) =\exp \left( -tA\right) $
together with the representation \eqref{op-SP} implies the analyticity of $%
S_\gamma(t)$ and $P_\gamma(t)$ for $t>0$. The analyticity of $T\left(
t\right)$ is also reflected in the inequality%
\begin{equation}
\left\vert T\left( t\right) v\right\vert _{\beta }\leq C_{\alpha ,\beta
}t^{-\left( \beta -\alpha \right) /2}\left\vert v\right\vert _{\alpha },%
\text{ with }-2\leq \alpha \leq \beta \leq 2.  \label{s-sp}
\end{equation}%
Combining (\ref{s-sp}) with the norm $\left\vert v\right\vert _{\alpha }$
via (\ref{fractional-p0})-(\ref{fractional-p}) for all real $\alpha $, and
exploiting the identities (\ref{moment})-(\ref{op-SP}) (or, respectively (%
\ref{ML-bound})-(\ref{op-SP2})), we easily obtain the estimates (\ref{s-1})-(%
\ref{s-2}).
\end{proof}

\section{Well-posedness results for the forward problem}

\label{ss:sem}

In the above framework, we can conveniently rewrite the semilinear problem
as follows:%
\begin{equation}
\left\{ 
\begin{array}{ll}
\partial _{t}^{\gamma }u\left( x,t\right) +Au\left( x,t\right) =f\left(
u\left( x,t\right) \right) +\mathbb{B}z\left( x,t\right) , & \text{in }%
\Omega \times (0,\infty ), \\ 
u\left( \cdot ,0\right) =u_{0}\text{ in }\Omega . & 
\end{array}%
\right.  \label{abs-par}
\end{equation}%
Our main goal in this section is to state sufficiently general conditions on
the data $\left( f,z,u_{0}\right) $ for which we can infer the existence of
properly-defined solutions\footnote{%
We will also investigate in which sense (\ref{abs-par}) is satisfied by
non-regular solutions.} for (\ref{abs-par}). Let $T\in \left( 0,\infty
\right) $ and denote by $J$ a time interval of the form $\left[ 0,T\right] ,$
$[0,T)$ or $[0,\infty )$.

\begin{definition}
\label{mild-sol}By a mild solution of (\ref{abs-par}) on the interval $J$,
we mean that the measurable function $u$ has the following properties:

\begin{enumerate}
\item $u\in C\left( J;V_{\alpha }\right) ,$ for some $\alpha \in \mathbb{R}$.

\item $f\left( u\left( \cdot ,t\right) \right) \in V_{\beta },$ for all $%
t\in J,$ for some $\alpha \geq \beta ,$ $\beta \in \mathbb{R}$.

\item $\displaystyle u\left( \cdot ,t\right) =S_{\gamma }\left( t\right)
u_{0}+\int_{0}^{t}P_{\gamma }\left( t-\tau \right) f\left( u\left( \cdot
,\tau \right) \right) d\tau +\int_{0}^{t}P_{\gamma }\left( t-\tau \right) 
\mathbb{B}z\left( \cdot ,\tau \right) d\tau ,$ for all $t\in J\backslash
\left\{ 0\right\} ,$ where the integral is an absolutely converging Bochner
integral in the space $V_{\alpha }.$

\item The initial datum $u_{0}$ is assumed in the following sense:%
\begin{equation}
\lim_{t\downarrow 0^{+}}\left\vert u\left( \cdot ,t\right) -u_{0}\right\vert
_{\alpha }=0,  \label{ini-cont}
\end{equation}%
for $u_{0}\in V_{\alpha }.$
\end{enumerate}
\end{definition}

Our first goal is to establish the existence of maximally-defined mild
solutions under some suitable assumptions on the parameters of the problem.
These assumptions are as follows.

\begin{enumerate}
\item[(H1)] Let $\widetilde{\alpha }\in \lbrack -1,0]$ be such that $\mathbb{%
B}\in \mathcal{L}\left( L^{2}(D);V_{\widetilde{\alpha }}\right) $, for an
arbitrary\footnote{%
The control region $D$ depends upon the choice of $A,$ the geometry of $%
\Omega $ and the control operator $\mathbb{B}$, as some examples will show
in Section \ref{ss:csoe}.} (Hausdorff, at least) space $D$. Set $I_{\beta
}:=[\beta ,\beta +2),$ for $\beta \in \mathbb{R}$ and assume that $u_{0}\in
V_{\alpha },$ for some $\alpha \in I_{\widetilde{\alpha }}\cap I_{\beta
}\neq \varnothing .$

\item[(H2)] The 'control' function $z\in L_{\text{loc}}^{q}(\mathbb{R}%
_{+};L^{2}\mathbb{(}D\mathbb{)}),$ for some $q\in (\frac{2}{\gamma \left(
2-\alpha +\widetilde{\alpha }\right) },\infty ].$

\item[(H3)] The nonlinear function $f\in C_{\text{loc}}^{0,1}\left( \mathbb{R%
}\right) ,$ $f\left( 0\right) =0,$ induces a locally Lipschitzian map 
\begin{equation*}
f:V_{\alpha }\rightarrow V_{\beta };
\end{equation*}%
namely, for every $R>0$, with $\left\vert u\right\vert _{\alpha },\left\vert
v\right\vert _{\alpha }\leq R,$ there exists $C_{R}>0$ such that%
\begin{equation*}
\left\vert f\left( u\right) -f\left( v\right) \right\vert _{\beta }\leq
C_{R}\left\vert u-v\right\vert _{\alpha },\text{ for all }u,v\in V_{\alpha
}\hookrightarrow V_{\beta }.
\end{equation*}
\end{enumerate}

Our first result is concerned with the existence and uniqueness of
locally-defined mild solutions (see Appendix \ref{sec:ap} for the proof).

\begin{lemma}
\label{T1-integral-H1}(\textbf{Local existence}). Assume (HA) and (H1)-(H3).
Then there exists a time $T_{\ast }>0$ (depending on $u_{0}$) such that the
problem (\ref{abs-par}) possesses a unique mild solution in the sense of
Definition \ref{mild-sol} on the interval $J=\left[ 0,T_{\ast }\right] .$
\end{lemma}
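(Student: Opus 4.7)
The plan is to set up a Banach fixed-point argument for the integral map
\[
\Phi(u)(t) := S_\gamma(t)u_0 + \int_0^t P_\gamma(t-\tau) f(u(\tau))\,d\tau + \int_0^t P_\gamma(t-\tau)\mathbb{B}z(\tau)\,d\tau
\]
on a closed ball of $X_{T_*} := C([0,T_*]; V_\alpha)$ (sup-norm), for $T_* > 0$ to be chosen small. Every estimate below is a single invocation of Proposition~\ref{est-SP}, with the choice of indices dictated by (H1)--(H3); once the correct range is verified, the contraction falls out without any Gronwall argument at this local stage.

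For the linear term, the strong continuity/contraction property of $S_\gamma$ on $V_\alpha$ (Proposition~\ref{est-SP}) gives $|S_\gamma(t)u_0|_\alpha \leq C|u_0|_\alpha$ for all $t \in [0,T_*]$. For the semilinear term, I apply (\ref{s-2}) with $\widetilde{\alpha} = \beta$ and $\widetilde{\beta} = \alpha$, and combine with (H3) (together with $f(0)=0$); for $u$ in a ball $B_R$ of radius $R$ in $X_{T_*}$, this yields
\[
\left|\int_0^t P_\gamma(t-\tau)f(u(\tau))\,d\tau\right|_\alpha \leq C_R \int_0^t (t-\tau)^{\gamma - 1 - \gamma(\alpha-\beta)/2}\,|u(\tau)|_\alpha\,d\tau \leq C_R R\, t^{\gamma(2-\alpha+\beta)/2},
\]
the kernel being integrable precisely because (H1) forces $\alpha - \beta \in [0,2)$. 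For the control term I again invoke (\ref{s-2}), this time with $\widetilde{\alpha}$ of (H1) as the lower endpoint and $\widetilde{\beta} = \alpha$, followed by H\"older's inequality in time against $z \in L^q(0,T_*;L^2(D))$: the integrability of the kernel raised to the power $q' = q/(q-1)$ is exactly equivalent to the strict inequality $q > 2/(\gamma(2-\alpha+\widetilde{\alpha}))$ in (H2), producing a bound $\lesssim \|z\|_{L^q}\, t^{\sigma}$ with some $\sigma > 0$.

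The contraction estimate runs through the same computation with $f(u)-f(v)$ in place of $f(u)$ and the local Lipschitz constant from (H3):
\[
\|\Phi(u) - \Phi(v)\|_{X_{T_*}} \leq C_R\, T_*^{\gamma(2-\alpha+\beta)/2}\,\|u-v\|_{X_{T_*}}.
\]
Choosing $R$ larger than $C|u_0|_\alpha$ plus the control contribution and then $T_*$ small enough so that $C_R T_*^{\gamma(2-\alpha+\beta)/2} < 1/2$ and $\Phi(B_R) \subset B_R$ simultaneously hold, Banach's fixed-point theorem delivers a unique $u \in X_{T_*}$ with $\Phi(u) = u$. Properties (1)--(3) of Definition~\ref{mild-sol} are built into the construction; for the initial value condition (\ref{ini-cont}), I combine the strong continuity of $S_\gamma$ at $t=0$ with the fact that both Duhamel integrals are controlled in $V_\alpha$ by a strictly positive power of $t$, hence vanish as $t\downarrow 0^+$.

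The main obstacle is the careful bookkeeping of the five exponents $(\alpha, \beta, \widetilde{\alpha}, \gamma, q)$ so that every singularity in Proposition~\ref{est-SP} is integrable \emph{and} the resulting power of $T_*$ is strictly positive: (H1) guarantees that both Duhamel kernels have admissible indices with matching positive-power behavior in $T_*$, while (H2) is precisely the sharp threshold under which the H\"older estimate for the control integral converges. The use of the generalized space $V_{\widetilde{\alpha}}$ for $\widetilde{\alpha} \in [-1,0]$ (to accommodate boundary/exterior controls) is what makes this bookkeeping nontrivial compared to the purely interior case.
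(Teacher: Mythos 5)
Your proposal is correct and follows essentially the same route as the paper's proof in the Appendix: a contraction mapping argument for the same integral operator on a ball of $C([0,T_*];V_\alpha)$, with the $f$-term bounded via (\ref{s-2}) and (H3) to give a factor $T_*^{\gamma(2-\alpha+\beta)/2}$, the control term handled by H\"older against $z\in L^q$ with the exponent threshold of (H2), and the initial condition recovered from the strong continuity of $S_\gamma$ together with the positive powers of $t$ in the Duhamel terms. Nothing further is needed.
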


Our second statement shows that locally-defined mild solutions in $V_{\alpha
}$ can be (uniquely) extended to a larger interval (see Appendix \ref{sec:ap}
for the proof).

\begin{lemma}
\label{extension} (\textbf{Unique continuation}) Let the assumptions of
Lemma \ref{T1-integral-H1} be satisfied. Then the unique integral solution
on $J=[0,T^{\star }]$ of (\ref{abs-par}) can be extended to the interval $%
[0,T^{\star }+\tau ]$, for some $\tau >0$, so that, the extended function is
the unique mild solution of (\ref{abs-par}) on $[0,T^{\star }+\tau ]$ in the
sense of Definition \ref{mild-sol}.
\end{lemma}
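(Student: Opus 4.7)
My plan is to reduce the extension problem to a second application of the local existence result (Lemma \ref{T1-integral-H1}), but with a twist: because the fractional semigroup $S_\gamma$ does not satisfy the semigroup identity $S_\gamma(t+s)\neq S_\gamma(t)S_\gamma(s)$, I cannot simply restart the Cauchy problem at time $T^\star$ with fresh datum $u(T^\star)$. Instead, I will retain the memory of the solution on $[0,T^\star]$ as a forcing term (the ``free part'') and solve a new Volterra integral equation on $[T^\star,T^\star+\tau]$ via a contraction argument.

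Concretely, set $u_1:=u(\cdot,T^\star)\in V_\alpha$, which is well-defined by Definition \ref{mild-sol}(1), and define the free term for $t\ge T^\star$ by
\begin{equation*}
w(t):=S_\gamma(t)u_0+\int_0^{T^\star}P_\gamma(t-s)\bigl[f(u(s))+\mathbb{B}z(s)\bigr]\,ds.
\end{equation*}
First I would verify that $w\in C([T^\star,T^\star+\tau];V_\alpha)$: since $f(u(\cdot))\in C([0,T^\star];V_\beta)$ by (H3), and $\mathbb{B}z\in L^q(0,T^\star;V_{\widetilde\alpha})$ by (H1)--(H2), the smoothing bound \eqref{s-2} of Proposition \ref{est-SP} together with dominated convergence shows that $w$ is continuous in $V_\alpha$ at every $t>T^\star$ (the kernel $P_\gamma(t-s)$ is bounded on $s\in[0,T^\star]$ and jointly continuous in $(t,s)$ for $t>s$) and, by direct substitution, $w(T^\star)=u(T^\star)=u_1$.

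Next I would set up the fixed-point problem on $[T^\star,T^\star+\tau]$,
\begin{equation*}
(\mathcal{F}\tilde u)(t):=w(t)+\int_{T^\star}^{t}P_\gamma(t-s)\bigl[f(\tilde u(s))+\mathbb{B}z(s)\bigr]\,ds,
\end{equation*}
inside a closed ball $\overline{B}_R(u_1)\subset C([T^\star,T^\star+\tau];V_\alpha)$ for $R>0$ fixed. The verification that $\mathcal{F}$ is a self-map and a contraction for $\tau$ sufficiently small is essentially the same calculation used in Lemma \ref{T1-integral-H1}: applying \eqref{s-2} with the pair $(\beta,\alpha)$ of (H3) controls the $f$-term, while H\"older's inequality together with (H1)--(H2) and the condition $q>\tfrac{2}{\gamma(2-\alpha+\widetilde\alpha)}$ controls the forcing term. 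Note that because the lower limit of integration has been shifted to $T^\star$, the new integrals involve only $s\ge T^\star$, so the exponent arithmetic is identical to that in the original local-existence proof and yields a vanishing Lipschitz constant as $\tau\downarrow 0$. Banach's fixed point theorem then produces a unique $\tilde u\in C([T^\star,T^\star+\tau];V_\alpha)$.

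Finally I would glue the solutions: define $\widetilde U(t):=u(t)$ for $t\in[0,T^\star]$ and $\widetilde U(t):=\tilde u(t)$ for $t\in[T^\star,T^\star+\tau]$. Continuity at $T^\star$ is automatic from $\tilde u(T^\star)=w(T^\star)=u(T^\star)$, and splitting $\int_0^t=\int_0^{T^\star}+\int_{T^\star}^t$ in the mild formula shows at once that $\widetilde U$ satisfies Definition \ref{mild-sol} on $[0,T^\star+\tau]$. Uniqueness on the extended interval follows by combining the given uniqueness on $[0,T^\star]$ with uniqueness of the fixed point on $[T^\star,T^\star+\tau]$. The main obstacle I anticipate is purely technical rather than conceptual: showing right-continuity of $w$ at $t=T^\star$ in the $V_\alpha$-norm, since one must pass a limit through a Bochner integral whose kernel $P_\gamma(t-s)$ degenerates as $t\downarrow T^\star=s$; this is however handled by the integrable-singularity estimate $\|P_\gamma(t-s)\|_{\mathcal{L}(V_\beta,V_\alpha)}\lesssim (t-s)^{\gamma-1-\gamma(\alpha-\beta)/2}$ guaranteed by Proposition \ref{est-SP}, combined with dominated convergence.
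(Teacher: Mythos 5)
Your proof is correct and follows essentially the same route as the paper: the paper runs its contraction on the space $\mathbb{K}$ of continuous extensions of $u$ to $[0,T^{\star}+\tau]$ that stay within $R$ of $u(T^{\star})$, which, after restricting to $[T^{\star},T^{\star}+\tau]$, is exactly your fixed-point problem with the memory term $w$ kept as a forcing. The delicate point you single out --- right-continuity of the free part at $t=T^{\star}$ via dominated convergence against the integrable singularity of $P_{\gamma}$ --- is precisely what the paper handles in its $Q_{1}$ and $Q_{2}$ estimates.
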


The following statement is then straightforward on account of the above
lemmas.

\begin{theorem}
\label{prelim}Assume (HA) and (H1)-(H3). Problem (\ref{abs-par}) has a
unique mild solution on $J=[0,T_{\max })$ in the sense of Definition \ref%
{mild-sol}, where either $T_{\max }=\infty $ or $T_{\max }<\infty $, and in
that case, 
\begin{equation*}
\underset{t\rightarrow T_{\max }^{-}}{\lim \sup }\left\vert u\left( t\right)
\right\vert _{\alpha }=\infty .
\end{equation*}
\end{theorem}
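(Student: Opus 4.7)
The plan is standard: Lemmas \ref{T1-integral-H1} and \ref{extension} furnish the two building blocks, and it remains to assemble them into a maximal continuation statement with a blow-up alternative. First I would define
\[
T_{\max} := \sup\{T > 0 : \text{(\ref{abs-par}) has a unique mild solution on } [0,T] \text{ in the sense of Definition \ref{mild-sol}}\},
\]
which is strictly positive by Lemma \ref{T1-integral-H1}. Given two mild solutions on $[0,T_1]$ and $[0,T_2]$ with $T_1 \leq T_2$, the uniqueness claim of Lemma \ref{T1-integral-H1} applied at any common sub-interval shows they coincide on $[0,T_1]$; consequently, one obtains a well-defined mild solution $u \in C([0,T_{\max}); V_\alpha)$ by gluing. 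If $T_{\max} = \infty$ the theorem is proved, so from this point on I would assume $T_{\max} < \infty$.

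The heart of the argument is then a contradiction. Suppose that
\[
M := \limsup_{t \to T_{\max}^-} |u(t)|_\alpha < \infty,
\]
pick $t_\ast \in (0, T_{\max})$ with $|u(t_\ast)|_\alpha \leq M+1$, and consider the problem (\ref{abs-par}) restarted at time $t_\ast$ with datum $u(t_\ast) \in V_\alpha$, the control being the translated function $z(\cdot + t_\ast)$ which still satisfies (H2). The key observation I would extract from the proof in Appendix \ref{sec:ap} is that the existence time furnished by Lemma \ref{T1-integral-H1} depends on the initial data only through its $V_\alpha$-norm and through the $L^q$-norm of the control on a fixed interval; indeed, the radius of the invariant ball and the contraction constant of the associated fixed-point operator are determined solely by $|u_0|_\alpha$, together with the estimates of Proposition \ref{est-SP} and the local Lipschitz constant $C_R$ from (H3). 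Hence there exists $\tau_0 = \tau_0(M+1) > 0$, uniform across all initial data of norm at most $M+1$, on which a mild solution exists. Choosing $t_\ast$ so close to $T_{\max}$ that $T_{\max} - t_\ast < \tau_0$ and gluing the restarted solution to $u$ on $[0,t_\ast]$ produces a mild solution on $[0, t_\ast + \tau_0]$ with $t_\ast + \tau_0 > T_{\max}$, contradicting the definition of $T_{\max}$.

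The main obstacle is precisely this uniformity of the local existence time on bounded subsets of $V_\alpha$, together with the compatibility of the restarted and original solutions. The former forces a careful re-reading of the contraction mapping in the appendix to confirm that $\tau_0$ depends on $|u_0|_\alpha$ only (the contribution of the source $\mathbb{B}z$ being controlled by absolute continuity of $\int_{t_\ast}^{t_\ast + \tau_0} \|z(\sigma)\|_{L^2(D)}^q\,d\sigma$ as $\tau_0 \to 0$, in view of (H2)). The latter requires verifying the initial-data condition (\ref{ini-cont}) at the restart time $t_\ast$, which follows from the strong continuity of $S_\gamma$ on $V_\alpha$ asserted in Proposition \ref{est-SP}, and from the fact that the Duhamel integrals vanish as the upper limit tends to $t_\ast$; one then invokes uniqueness on the overlap to confirm that the glued function is indeed a mild solution on the full extended interval.
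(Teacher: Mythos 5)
Your overall architecture (define $T_{\max}$ as a supremum, glue by uniqueness, argue the blow-up alternative by contradiction) matches the paper's intent, and the paper's own proof is only a one-line appeal to Lemmas \ref{T1-integral-H1} and \ref{extension} plus "a contradiction argument." But the specific contradiction argument you propose — restart the equation at $t_\ast$ with datum $u(t_\ast)$ and the translated control $z(\cdot+t_\ast)$, then glue — contains a genuine gap that is fatal precisely in the time-fractional setting. The solution operator here has \emph{no semigroup (cocycle) property}: the paper emphasizes this in Remark \ref{theta}(c) and in the abstract. Concretely, the restarted mild solution satisfies
\begin{equation*}
v(t)=S_{\gamma }(t-t_{\ast })u(t_{\ast })+\int_{t_{\ast }}^{t}P_{\gamma
}(t-\tau )\bigl[f(v(\tau ))+\mathbb{B}z(\tau )\bigr]d\tau ,
\end{equation*}
whereas Definition \ref{mild-sol}(c) requires the glued function to satisfy
\begin{equation*}
u(t)=S_{\gamma }(t)u_{0}+\int_{0}^{t}P_{\gamma }(t-\tau )\bigl[f(u(\tau ))+
\mathbb{B}z(\tau )\bigr]d\tau
\end{equation*}
for $t>t_{\ast }$ as well. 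Since $S_{\gamma }(t-t_{\ast })u(t_{\ast })$ does not equal $S_{\gamma }(t)u_{0}+\int_{0}^{t_{\ast }}P_{\gamma }(t-\tau )[\cdots ]d\tau$ in general, the glued function is \emph{not} a mild solution of (\ref{abs-par}) on $[0,t_{\ast }+\tau _{0}]$; there is also no overlap interval on which to "invoke uniqueness," so that step cannot rescue the construction. The fractional derivative at $t>t_{\ast }$ sees the entire history on $[0,t]$, and your restarted problem discards it.

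The correct continuation device is exactly Lemma \ref{extension}: one runs the fixed-point argument on the space $\mathbb{K}$ of functions on the \emph{whole} interval $[0,T^{\star }+\tau ]$ that coincide with $u$ on $[0,T^{\star }]$, so the memory terms are retained. For the blow-up alternative you then need one of two substitutes for your "uniform $\tau _{0}(M+1)$" claim: either (a) show that under $\limsup_{t\rightarrow T_{\max }^{-}}|u(t)|_{\alpha }<\infty$ the solution extends continuously to $t=T_{\max }$ (a Cauchy argument in $V_{\alpha }$ using the integral representation and the uniform bound $R$), after which Lemma \ref{extension} applied with $T^{\star }=T_{\max }$ yields the contradiction; or (b) track the proof of Lemma \ref{extension} to see that $\tau$ can be chosen uniformly for $T^{\star }$ in a neighborhood of $T_{\max }$ given the uniform bound $R$. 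Note that even this uniformity is more delicate than in the semigroup case, since the terms $Q_{1}$, $Q_{21}$, $Q_{22}$ in that proof involve the continuity of $t\mapsto S_{\gamma }(t)u_{0}$ and of the kernel $P_{\gamma }$ over the full history $[0,T^{\star }]$, not merely the $V_{\alpha }$-norm of the data at the restart time. Your instinct that "the existence time depends on the data only through its $V_{\alpha }$-norm" is the right quantitative target, but it must be established for the memory-preserving extension map of Lemma \ref{extension}, not for a restarted initial-value problem.
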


\begin{proof}
The proof is standard owing to Lemma \ref{T1-integral-H1}\ and Lemma \ref%
{extension}, respectively, and a contradiction argument (see, e.g., \cite%
{AGW,MN}).
\end{proof}

We derive a sufficient condition in order to conclude additional (temporal)
regularity for the mild solution (under the same assumptions of Theorem \ref%
{prelim}; see Appendix \ref{sec:ap}, for a proof).

\begin{theorem}
\label{reg-thm}Let $u:[0,T]\rightarrow V_{\alpha }$ be a mild solution in
the sense of Theorem \ref{prelim} for any $T<T_{\max }$. In addition, assume
that $u_{0}\in V_{\beta +2}\hookrightarrow V_{\alpha }$ and $z\in
W^{1,1}(\left(0,T\right) ;L^{2}\left( D\right) ),$ provided that%
\begin{equation}
\left\Vert \partial _{t}z\left( t\right) \right\Vert _{L^{2}\left( D\right)
}\leq Ct^{\rho -1},\text{ \ for all }0<t\leq T,  \label{z-reg}
\end{equation}%
for some $\rho >0,$ and $C>0$ independent of $t$ and $z$. Then, for the
above mild solution, we have 
\begin{equation}
u\in W^{1,1+\xi }\left( \left( 0,T\right) ;V_{\alpha }\right) ,\text{ for
some }\xi >0.  \label{u-reg}
\end{equation}%
Furthermore, the identity $_{C}\partial _{t}^{\gamma }u\left( t\right)
=\partial _{t}^{\gamma }u\left( t\right) $ is satisfied for almost all $t\in
\left( 0,T\right) .$
\end{theorem}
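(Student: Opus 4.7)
The plan is to differentiate the mild-solution formula of Definition \ref{mild-sol} in $t$, derive a Volterra integral equation for $v(t):=\partial_t u(t)$ in $V_\alpha$, close it with the singular Gronwall lemma (Lemma \ref{A2}), and then read off the Caputo-derivative identity from Proposition \ref{P1}. Using the substitution $\sigma=t-\tau$ together with the spectral identity $\partial_t S_\gamma(t)v=-AP_\gamma(t)v$ (which follows from (\ref{op-SP2}) and $\tfrac{d}{dt}E_{\gamma,1}(-\lambda t^\gamma)=-\lambda t^{\gamma-1}E_{\gamma,\gamma}(-\lambda t^\gamma)$), a formal differentiation produces
\begin{equation*}
v(t)=F_0(t)+\int_0^t P_\gamma(t-\tau)\,f'(u(\tau))\,v(\tau)\,d\tau,
\end{equation*}
where
\begin{equation*}
F_0(t):=-P_\gamma(t)Au_0+P_\gamma(t)\bigl[f(u_0)+\mathbb{B}z(0)\bigr]+\int_0^t P_\gamma(t-\tau)\,\mathbb{B}\,\partial_\tau z(\tau)\,d\tau.
\end{equation*}
Observe that (\ref{z-reg}) yields $\partial_t z\in L^1((0,T);L^2(D))$, hence $z\in C([0,T];L^2(D))$ and $z(0)$ is well defined.

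The next step is the pointwise bound $|F_0(t)|_\alpha\leq C\,t^{\delta-1}$ on $(0,T]$, where $\delta:=\gamma\min\{1-(\alpha-\beta)/2,\;1-(\alpha-\widetilde\alpha)/2\}>0$, strict positivity being guaranteed by (H1) (which forces $\alpha-\beta,\alpha-\widetilde\alpha\in[0,2)$). This follows termwise from (\ref{s-2}) after noting that $u_0\in V_{\beta+2}$ gives $Au_0\in V_\beta$, that (H3) with $f(0)=0$ gives $f(u_0)\in V_\beta$, and that (H1) gives $\mathbb{B}z(0)\in V_{\widetilde\alpha}$, together with a beta-function evaluation for the last integral relying on (\ref{z-reg}). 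The self-referential term is controlled using the Lipschitz property in (H3): along the bounded trajectory $u\in C([0,T];V_\alpha)$ one has $|f'(u(\tau))v(\tau)|_\beta\leq C_R|v(\tau)|_\alpha$, yielding the singular Volterra inequality
\begin{equation*}
|v(t)|_\alpha\leq C_1 t^{\delta-1}+C_2\int_0^t (t-\tau)^{\gamma(1-(\alpha-\beta)/2)-1}|v(\tau)|_\alpha\,d\tau,
\end{equation*}
so that Lemma \ref{A2} delivers the sharp bound $|v(t)|_\alpha\leq C_\ast\,t^{\delta-1}$. Choosing any $\xi>0$ small enough that $(\delta-1)(1+\xi)>-1$ (any $\xi>0$ works when $\delta\geq 1$) then establishes (\ref{u-reg}); since $V_\alpha$ has the Radon--Nikodym property and $u\in W^{1,1+\xi}\hookrightarrow W^{1,1}$, Proposition \ref{P1} supplies $_C\partial_t^\gamma u=\partial_t^\gamma u$ almost everywhere.

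The principal obstacle is the \emph{rigorous} justification of the formal differentiation, since the kernel $P_\gamma(t-\tau)$ is singular at $\tau=t$ and $v$ reappears inside the right-hand integral, so pointwise difference quotients cannot be closed by dominated convergence alone. I would circumvent this by decoupling the construction of $v$ from that of $u$: first solve the linear Volterra equation above for $v$ independently, as a fixed-point problem in the weighted Banach space $\mathcal{X}:=\{w\in C((0,T];V_\alpha):\sup_{0<t\leq T} t^{1-\delta}|w(t)|_\alpha<\infty\}$, where contraction on short time intervals is inherited from the super-linear smoothing of the beta kernel, and extension to $[0,T]$ follows by concatenation as in Lemma \ref{extension}. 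Then I would verify the identity $u(t)-u_0=\int_0^t v(\tau)\,d\tau$ in $V_\alpha$ by plugging this candidate $v$ back into the mild equation and appealing to the uniqueness in Lemma \ref{T1-integral-H1}, thereby confirming that $v$ is indeed the genuine strong $V_\alpha$-derivative of $u$.
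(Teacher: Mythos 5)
Your overall skeleton --- a singular Volterra inequality for the time derivative, closed by Lemma \ref{A2} to get $|\partial_t u(t)|_\alpha\lesssim t^{\theta-1}$, then Proposition \ref{P1} for the Caputo identity --- is exactly the paper's, and you correctly identified the real difficulty, namely that the formal differentiation of the mild formula cannot be justified by dominated convergence. But your proposed fix has a genuine gap: the linear Volterra equation you want to solve for $v$ contains the term $\int_0^t P_\gamma(t-\tau)f'(u(\tau))v(\tau)\,d\tau$, and under the hypotheses of Theorem \ref{reg-thm} the object $f'(u(\tau))$ need not exist. The theorem assumes only (H1)--(H3), i.e.\ $f$ is a \emph{locally Lipschitz} map $V_\alpha\to V_\beta$; differentiability of the Nemytskii map, the bound $|f'(u)v|_\beta\lesssim |v|_\alpha$, and the chain rule $\tfrac{d}{d\tau}f(u(\tau))=f'(u(\tau))v(\tau)$ needed both to write your equation and to carry out the ``plug back and use uniqueness'' verification are precisely hypothesis (H4), which is only introduced later (for Theorem \ref{state-map} and the linearized system). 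So your construction of $v$ cannot even be set up at the stated level of generality, and the identification step $u(t)-u_0=\int_0^t v$ (which additionally requires the kernel identity $\int_0^r P_\gamma(\sigma)\,d\sigma\,=A^{-1}(I-S_\gamma(r))$ and a Fubini/integration-by-parts argument you leave unexamined) would inherit the same obstruction.

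The paper's proof avoids all of this by working directly with the difference quotients $Z(t,h)=h^{-1}(u(t+h)-u(t))$: the nonlinear term is estimated using only $|f(u(s+h))-f(u(s))|_\beta\le C_R\,h\,|Z(s,h)|_\alpha$, Lemma \ref{A2} gives $|Z(t,h)|_\alpha\le Ct^{\theta-1}$ \emph{uniformly in} $h$, and a.e.\ differentiability is then extracted not by identifying a candidate derivative but by observing that all four Dini derivatives are finite and invoking the Denjoy--Young--Saks theorem. That last device is the idea your proposal is missing: it converts a uniform bound on difference quotients into a.e.\ differentiability without ever needing $f'$ or a chain rule. If you are willing to assume (H4), your route is viable (and is essentially what the paper does later for $\partial_t u_1-\partial_t u_2$ and for the linearized system), but it proves a strictly weaker statement than Theorem \ref{reg-thm} as written.
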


In what follows it is more convenient to set $\widetilde{\alpha }=\beta \in
\lbrack -1,0]$ and recall that $\alpha \in \lbrack \beta ,\beta +2)$ (so
that assumption (H1) is satisfied). The previous theorems imply the
following two (major) statements which conclude the section.

\begin{theorem}
\label{main1}(\textbf{The problem for the generalized Caputo derivative}) If
(HA), (H1)-(H3) hold, then problem (\ref{abs-par}) has a unique mild
solution $u\in C\left( [0,T_{\max });V_{\alpha }\right) ,$ for which%
\begin{equation}
\partial _{t}^{\gamma }u:=\partial _{t}\left( g_{1-\gamma }\ast \left(
u-u_{0}\right) \right) \in C([0,T_{\max });V_{\widetilde{\alpha }-\delta }),%
\text{ for any }\delta \in \left(\frac{2}{\gamma q},2\right].
\label{time-reg1}
\end{equation}%
Moreover, the variational identity%
\begin{equation}
\langle \partial _{t}^{\gamma }u\left( t\right) +Au\left( t\right) ,v\rangle
_{V_{\widetilde{\alpha }-\delta },V_{-\widetilde{\alpha }+\delta }}=\langle
f\left( u\left( t\right) \right) +\mathbb{B}z\left( t\right) ,v\rangle _{V_{%
\widetilde{\alpha }},V_{-\widetilde{\alpha }}},  \label{var}
\end{equation}%
holds for any $v\in V_{-\widetilde{\alpha }+\delta }\subset V_{-\widetilde{%
\alpha }}$, and for almost all $t\in (0,T_{\max }).$
\end{theorem}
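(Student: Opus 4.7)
The existence and uniqueness of a mild solution $u \in C([0, T_{\max}); V_\alpha)$ follow immediately from Theorem \ref{prelim}, so the remaining content is the regularity (\ref{time-reg1}) and the variational identity (\ref{var}). The plan is to start from the mild representation in Definition \ref{mild-sol}(3), apply the left Riemann--Liouville fractional integral $g_{1-\gamma} \ast \cdot$, differentiate in $t$, and then test against an arbitrary $v \in V_{-\widetilde{\alpha} + \delta}$.

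The key technical ingredients are the two identities
\[
(g_{1-\gamma} \ast P_\gamma)(t) = S_\gamma(t), \qquad \partial_t S_\gamma(t) = -A\, P_\gamma(t),
\]
both of which follow from the spectral representation (\ref{op-SP2}) together with the standard Mittag--Leffler calculus (using $I_0^{1-\gamma}[t^{\gamma-1} E_{\gamma, \gamma}(-\lambda t^\gamma)] = E_{\gamma, 1}(-\lambda t^\gamma)$ and $\partial_t E_{\gamma, 1}(-\lambda t^\gamma) = -\lambda t^{\gamma-1} E_{\gamma, \gamma}(-\lambda t^\gamma)$). Applying $g_{1-\gamma} \ast$ to the mild formula and using Fubini's theorem yields
\[
(g_{1-\gamma} \ast (u - u_0))(t) = (g_{1-\gamma} \ast S_\gamma u_0)(t) - g_{2-\gamma}(t)\, u_0 + \int_0^t S_\gamma(t-s)\bigl[f(u(s)) + \mathbb{B} z(s)\bigr]\, ds.
\]
Differentiating in $t$, the boundary contribution $g_{1-\gamma}(t) u_0$ coming from $\partial_t(g_{1-\gamma} \ast S_\gamma u_0)$ cancels exactly against $\partial_t g_{2-\gamma}(t) u_0$, so one arrives at $\partial_t^\gamma u(t) = -A u(t) + f(u(t)) + \mathbb{B} z(t)$ for a.e.\ $t\in(0,T_{\max})$. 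Pairing against any $v \in V_{-\widetilde{\alpha} + \delta} \subset V_{-\widetilde{\alpha}}$ and interpreting the $A$-term in the natural $V_{\widetilde{\alpha} - \delta}, V_{-\widetilde{\alpha} + \delta}$ duality then delivers (\ref{var}).

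For (\ref{time-reg1}), each term on the right-hand side needs to be controlled in $V_{\widetilde{\alpha} - \delta}$. Since $u \in C(V_\alpha)$ with $\alpha \in [\widetilde{\alpha}, \widetilde{\alpha} + 2)$, one has $Au \in C([0, T_{\max}); V_{\alpha - 2}) \hookrightarrow C([0, T_{\max}); V_{\widetilde{\alpha} - \delta})$ for every $\delta \in (0, 2]$, and (H3) combined with continuity of $u$ yields $f(u) \in C([0, T_{\max}); V_{\widetilde{\alpha}})$. The hard part will be the $\mathbb{B} z$ contribution, because $z$ is only $L^q$ in time and so a naive pointwise reading cannot give continuity. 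The argument must therefore be run on the smoothed quantity $\int_0^t S_\gamma(t-s) \mathbb{B} z(s)\, ds$, using Proposition \ref{est-SP} in the form $|S_\gamma(t-s) \mathbb{B} z(s)|_{\widetilde{\alpha} - \delta} \lesssim (t-s)^{-\gamma \delta/2}\|z(s)\|_{L^2(D)}$ combined with H\"older's inequality in $s$: the integrability condition on the kernel $(t-s)^{-\gamma\delta/2}$ tested against $L^q$ forcing is precisely what produces the threshold $\delta > 2/(\gamma q)$, and under it a standard dominated-convergence argument yields continuity in $V_{\widetilde{\alpha} - \delta}$. Threading this H\"older/Young balance between the space-regularity loss $\delta$ and the time integrability $q$, while keeping the mild formula consistent with the strong definition of $\partial_t^\gamma u$, is the main technical obstacle.
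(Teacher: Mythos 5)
Your formal derivation of the identity $\partial _{t}^{\gamma }u=-Au+f(u)+\mathbb{B}z$ from the mild representation --- via $g_{1-\gamma }\ast P_{\gamma }=S_{\gamma }$, $\partial _{t}S_{\gamma }=-AP_{\gamma }$, and the exact cancellation of the $g_{1-\gamma }(t)u_{0}$ terms --- is correct and is precisely the mechanism behind the paper's one-line claim that (\ref{var}) ``follows from the solution representation''; likewise, existence and uniqueness are indeed just Theorem \ref{prelim}. The gap is in the regularity step (\ref{time-reg1}), exactly where you flag ``the main technical obstacle.'' The embedding $C([0,T_{\max });V_{\alpha -2})\hookrightarrow C([0,T_{\max });V_{\widetilde{\alpha }-\delta })$ requires $\alpha -2\geq \widetilde{\alpha }-\delta $, i.e. $\delta \geq 2-(\alpha -\widetilde{\alpha })$; it does \emph{not} hold for every $\delta \in (0,2]$. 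Since (H2) forces $\frac{2}{\gamma q}<2-(\alpha -\widetilde{\alpha })$, your crude bound $Au\in C(V_{\alpha -2})$ leaves the entire subinterval $\delta \in (\frac{2}{\gamma q},\,2-(\alpha -\widetilde{\alpha }))$ of the asserted range uncovered. To reach those $\delta $ one must not discard the smoothing of the convolution: the paper estimates $\left\vert A(P_{\gamma }\ast F)(t)\right\vert _{\widetilde{\alpha }-\delta }=\left\vert (P_{\gamma }\ast F)(t)\right\vert _{\widetilde{\alpha }+2-\delta }$ directly via Proposition \ref{est-SP}, which produces the kernel $(t-\tau )^{\frac{\gamma \delta }{2}-1}$ against $\left\vert F(\tau )\right\vert _{\widetilde{\alpha }}$ and hence, by H\"older against $F\in L^{q}((0,T);V_{\widetilde{\alpha }})$, the bound $t^{\frac{\gamma \delta }{2}-\frac{1}{q}}\Vert F\Vert _{L^{q}(V_{\widetilde{\alpha }})}$ --- finite exactly when $\delta >\frac{2}{\gamma q}$. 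This is applied to both $F=f(u)$ and $F=\mathbb{B}z$, with $AS_{\gamma }(t)u_{0}$ handled separately.

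Your kernel bookkeeping for the control term is also off in a way that would not produce the threshold. Mapping $V_{\widetilde{\alpha }}$ into $V_{\widetilde{\alpha }-\delta }$ with $S_{\gamma }$ is a move \emph{down} the scale and costs no singular factor, so $\left\vert S_{\gamma }(t-s)\mathbb{B}z(s)\right\vert _{\widetilde{\alpha }-\delta }\lesssim (t-s)^{-\gamma \delta /2}\Vert z(s)\Vert _{L^{2}(D)}$ is not where the condition on $\delta $ originates; moreover, running H\"older with a kernel $(t-s)^{-\gamma \delta /2}$ against $L^{q}$ forcing yields an integrability condition that is an \emph{upper} bound on $\delta $, the opposite of what is needed. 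The correct singular kernel $(t-s)^{\frac{\gamma \delta }{2}-1}$ only appears after differentiating, from $AP_{\gamma }(t-s)$ acting on $V_{\widetilde{\alpha }}$ with a gain of $2-\delta $ derivatives. Finally, note that $\partial _{t}(S_{\gamma }\ast \mathbb{B}z)(t)=\mathbb{B}z(t)-A(P_{\gamma }\ast \mathbb{B}z)(t)$ retains the raw summand $\mathbb{B}z(t)$, so the continuity of $\partial _{t}^{\gamma }u$ cannot be extracted from the smoothed quantity $\int_{0}^{t}S_{\gamma }(t-s)\mathbb{B}z(s)\,ds$ alone; one has to pass through the identity $\partial _{t}^{\gamma }u=-Au+f(u)+\mathbb{B}z$ and control $Au$ as above.
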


\begin{proof}
The statement (\ref{time-reg1}) is a consequence of the proof of Theorem \ref%
{prelim}, since $f\left( u\right) \in C([0,T_{\max });V_{\widetilde{\alpha }%
})\subset L^{q}\left( 0,T_{\max };V_{\widetilde{\alpha }}\right) $, 
\begin{align*}
& \left[ \left\vert A\left( P_{\gamma }\ast \mathbb{B}z\right) \left(
t\right) \right\vert _{\widetilde{\alpha }-\delta }+\left\vert A\left(
P_{\gamma }\ast f\left( u\right) \right) \left( t\right) \right\vert _{%
\widetilde{\alpha }-\delta }\right] \\
& \lesssim t^{\frac{\gamma \delta }{2}-\frac{1}{q}}\left( \left\Vert \mathbb{%
B}z\right\Vert _{L^{q}\left( 0,T;V_{\widetilde{\alpha }}\right) }+\left\Vert
f\left( u\right) \right\Vert _{L^{q}\left( 0,T;V_{\widetilde{\alpha }%
}\right) }\right) ,
\end{align*}%
and%
\begin{equation*}
\left\vert AS_{\gamma }\left( t\right) u_{0}\right\vert _{\widetilde{\alpha }%
-\delta }\leq \left\vert AS_{\gamma }\left( t\right) u_{0}\right\vert _{%
\widetilde{\alpha }}\lesssim t^{\frac{\gamma }{2}\left( 2+\widetilde{\alpha }%
-\alpha \right) }\left\vert u_{0}\right\vert _{\alpha },
\end{equation*}%
for all $T_{\max }>t>0$. The identity (\ref{var}) then follows from the
solution representation in Definition \ref{mild-sol} and from (\ref%
{time-reg1}).
\end{proof}

\begin{corollary}
\label{main2}(\textbf{The problem for the strong Caputo derivative}) Let the
assumptions of Theorem \ref{main1} be satisfied, and in addition, assume $%
u_{0}\in V_{\beta +2}\subset V_{\alpha }$ and (\ref{z-reg}) hold. Then the
mild solution of problem (\ref{abs-par}) satisfies $\left\vert \partial
_{t}u\left( t\right) \right\vert _{\alpha }\lesssim t^{\theta -1}$, and
therefore,%
\begin{equation}
u\in W^{1,1+\xi }\left( \left(0,T\right) ;V_{\alpha }\right) \cap L^{\sigma
}\left( (0,T);V_{2+\widetilde{\alpha }}\right) ,\text{ }_{C}\partial
_{t}^{\gamma }u\in L^{1+\xi }\left((0,T);V_{\widetilde{\alpha }}\right) ,
\label{u-reg2}
\end{equation}%
for $\xi =\xi \left( \theta \right) \in (0,\frac{\theta }{1-\theta })$, $%
\sigma :=\min \left( 1+\xi ,q\right) >1$. Moreover, the variational identity%
\begin{equation}
\langle \partial _{t}^{\gamma }u\left( t\right) +Au\left( t\right) ,v\rangle
_{V_{\widetilde{\alpha }},V_{-\widetilde{\alpha }}}=\langle f\left( u\left(
t\right) \right) +\mathbb{B}z\left( t\right) ,v\rangle _{V_{\widetilde{%
\alpha }},V_{-\widetilde{\alpha }}},  \label{var2}
\end{equation}%
holds for any $v\in V_{-\widetilde{\alpha }}$, and for almost all $t\in
(0,T_{\max }).$
\end{corollary}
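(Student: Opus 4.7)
The plan is to refine the analysis underlying Theorem~\ref{reg-thm}. That theorem already yields $u \in W^{1,1+\xi}((0,T); V_{\alpha})$ and, via Proposition~\ref{P1}, identifies $_{C}\partial _{t}^{\gamma }u$ with $\partial _{t}^{\gamma }u$ almost everywhere. The first and crucial claim I will establish is the pointwise bound $\left\vert \partial _{t}u(t)\right\vert _{\alpha } \lesssim t^{\theta -1}$ for some $\theta \in (0,1]$; this is precisely what drives the $W^{1,1+\xi }$-membership (for any $\xi < \theta /(1-\theta )$) via H\"older's inequality. I would obtain the bound by differentiating the mild formula from Definition~\ref{mild-sol} term by term. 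For $\partial _{t}S_{\gamma }(t)u_{0}$ one uses the identity $\partial _{t}S_{\gamma }(t)v = -A P_{\gamma }(t)v$, which is immediate from the spectral representation~(\ref{op-SP2}) and the derivative rule $\frac{d}{dt} E_{\gamma ,1}(-\lambda t^{\gamma }) = -\lambda t^{\gamma -1}E_{\gamma ,\gamma }(-\lambda t^{\gamma })$, together with the upgraded regularity $u_{0}\in V_{\beta +2}$ and Proposition~\ref{est-SP}. For the control convolution one integrates by parts to move the time derivative onto $z$, then exploits the assumption~(\ref{z-reg}). For the nonlinear convolution one uses the Lipschitz property (H3) and closes the estimate via a Gronwall-type bootstrap with Lemma~\ref{A2}, whose output is precisely the singular $t^{\theta -1}$-type bound.

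Given that pointwise bound, the remaining claims follow cleanly. The identity $_{C}\partial _{t}^{\gamma }u = g_{1-\gamma } \ast \partial _{t}u$ combined with the Beta-function convolution $\int_{0}^{t}(t-\tau )^{-\gamma }\tau ^{\theta -1}\, d\tau = B(1-\gamma ,\theta )\, t^{\theta -\gamma }$ yields $\left\vert {_{C}\partial _{t}^{\gamma }u(t)}\right\vert _{\alpha } \lesssim t^{\theta -\gamma }$; since $V_{\alpha }\hookrightarrow V_{\widetilde{\alpha }}$, this places $_{C}\partial _{t}^{\gamma }u$ in $L^{1+\xi }((0,T); V_{\widetilde{\alpha }})$ for any $\xi \in (0,\theta /(1-\theta ))$ chosen small enough that $(1+\xi )(\gamma -\theta ) < 1$. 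For the $L^{\sigma }((0,T); V_{2+\widetilde{\alpha }})$-regularity, I rearrange the equation as $Au = -{_{C}\partial _{t}^{\gamma }u} + f(u) + \mathbb{B}z$ and estimate the three terms on the right in the $V_{\widetilde{\alpha }}$-norm: the first by the preceding step, the second via (H3) together with $u \in C([0,T]; V_{\alpha })$, the third via (H1)-(H2). Invoking the norm equivalence $\left\vert u\right\vert _{2+\widetilde{\alpha }} \simeq \left\vert Au\right\vert _{\widetilde{\alpha }}$ then yields the claim with $\sigma = \min (1+\xi , q)$. The variational identity~(\ref{var2}) is an immediate upgrade of~(\ref{var}) from Theorem~\ref{main1}: both sides of the equation now lie in $V_{\widetilde{\alpha }}$ a.e., so the $\delta $-shift in the duality bracket is no longer necessary and the pairing extends to arbitrary $v \in V_{-\widetilde{\alpha }}$.

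The main obstacle is the pointwise bound $\left\vert \partial _{t}u(t)\right\vert _{\alpha } \lesssim t^{\theta -1}$, and in particular closing it for the nonlinear convolution: the natural estimate reinserts a singular weight $t^{\theta -1}$ on the right-hand side, and one must recover the same singular exponent on the left without any loss, which is precisely what Lemma~\ref{A2} provides (the final exponent on the right of~(\ref{gl}) is independent of the convolution exponent $\beta $, as stressed in the footnote there). A secondary delicacy is coordinating the exponents $\theta ,\xi ,\gamma ,q,\sigma $ so that all integrability assertions are simultaneously satisfied; this amounts to choosing $\xi $ small enough inside $(0,\theta /(1-\theta ))$ as indicated in the statement.
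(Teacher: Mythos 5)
Your overall architecture coincides with the paper's: the corollary rests on the pointwise bound $\left\vert \partial _{t}u\left( t\right) \right\vert _{\alpha }\lesssim t^{\theta -1}$ (which is the content of the proof of Theorem \ref{reg-thm}), followed by H\"older's inequality for the $W^{1,1+\xi }$-membership, a convolution estimate for $_{C}\partial _{t}^{\gamma }u=g_{1-\gamma }\ast \partial _{t}u$, a comparison argument in the equation to place $Au$ in $L^{\sigma }\left( (0,T);V_{\widetilde{\alpha }}\right) $ (hence $u\in L^{\sigma }\left( (0,T);V_{2+\widetilde{\alpha }}\right) $), and the observation that once every term of the equation lies in $V_{\widetilde{\alpha }}$ a.e., the $\delta $-shift in the duality pairing of \eqref{var} can be dropped. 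These downstream steps are correct and match the paper's (very short) proof, which simply invokes Theorem \ref{reg-thm} and argues by comparison in \eqref{var}.

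The gap is in your proposed re-derivation of the pointwise bound itself. You cannot ``differentiate the mild formula term by term'': for the nonlinear convolution $\int_{0}^{t}P_{\gamma }\left( t-\tau \right) f\left( u\left( \tau \right) \right) d\tau $, term-by-term differentiation produces $\frac{d}{dt}f\left( u\left( t-s\right) \right) $, which requires both the differentiability of $f$ (only local Lipschitz continuity is available under (H3) --- note that (H4) is \emph{not} a hypothesis of this corollary) and the a.e. differentiability of $u$, which is precisely what is being proven; as stated the argument is circular. The paper's proof of Theorem \ref{reg-thm} (in the Appendix) resolves this by never differentiating: it estimates the difference quotients $Z\left( t,h\right) =h^{-1}\left( u\left( t+h\right) -u\left( t\right) \right) $ uniformly in $h$, using the Lipschitz property only to bound $h^{-1}\left\vert f\left( u\left( s+h\right) \right) -f\left( u\left( s\right) \right) \right\vert _{\beta }$ by $C_{R}\left\vert Z\left( s,h\right) \right\vert _{\alpha }$ and closing the loop with Lemma \ref{A2}, and only afterwards converts the uniform bound $\left\vert Z\left( t,h\right) \right\vert _{\alpha }\leq Ct^{\theta -1}$ into a.e. differentiability via the Denjoy--Young--Saks theorem (equivalently, via the Radon--Nikodym property of the Hilbert space $V_{\alpha }$, since the bound makes $u$ Lipschitz on each $[\delta ,T]$). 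Your proposal is silent on this passage from bounded difference quotients to an actual derivative, and you mislocate the difficulty: the Gronwall closure that you single out is handled painlessly by Lemma \ref{A2}, whereas the real issue is that there is nothing to differentiate until the difference-quotient estimate has been established. A minor further remark: your side condition $(1+\xi )(\gamma -\theta )<1$ is superfluous, since Young's inequality $\left\Vert g_{1-\gamma }\ast \partial _{t}u\right\Vert _{L^{1+\xi }}\leq \left\Vert g_{1-\gamma }\right\Vert _{L^{1}(0,T)}\left\Vert \partial _{t}u\right\Vert _{L^{1+\xi }}$ already gives the membership for every $\xi <\theta /(1-\theta )$.
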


\begin{proof}
The proof is a consequence of Theorem \ref{reg-thm}, in view of the
assumptions of Theorem \ref{main1}. Indeed, owing to the fact that $\mathbb{B%
}z\in L^{q}\left( (0,T);V_{\widetilde{\alpha }}\right) $, $f\left( u\right)
\in C\left( \left[ 0,T\right] ;V_{\widetilde{\alpha }}\right) $ and $%
_{C}\partial _{t}^{\gamma }u=\partial _{t}^{\gamma }u\in L^{1+\xi }\left(
(0,T);V_{\widetilde{\alpha }}\right) $, one may argue, by comparison in (\ref%
{var}), that $Au\in L^{\sigma }\left( (0,T);V_{\widetilde{\alpha }}\right) ,$
which implies that $u\in L^{\sigma }\left( (0,T);V_{2+\widetilde{\alpha }%
}\right) $.
\end{proof}


\begin{remark}
\label{theta}\emph{We observe the following facts. }

\begin{enumerate}
\item \emph{When $\widetilde{\alpha }=\beta \in \lbrack -1,0]$ and $\alpha
\in \lbrack \beta ,\beta +2),$ we have in Corollary \ref{main2}, $\theta =%
\frac{\gamma \left( 2-\alpha +\beta \right) }{2}-\frac{1}{q}>0$ whenever $%
q\in (\frac{2}{\gamma \left( 2-\alpha +\beta \right) },\infty ].$ Moreover, $%
\sigma =\min \left( 1+\xi ,q\right) =1+\xi \in \left( 1,\eta \left( \theta
\right) \right) $ since $\eta \left( \theta \right) :=\frac{1}{1-\theta }\in
(1,q].$ }

\item \emph{When $\widetilde{\alpha }\neq \beta $, the explicit value of $%
\theta >0,$ in terms of $\alpha ,\beta ,\gamma $ and $\widetilde{\alpha },$
can be found in (\ref{thetabis}), namely\footnote{\emph{The value of $\theta 
$ is independent of $\rho >0.$}},%
\begin{equation*}
\theta =\min \left\{ \frac{\gamma }{2}\left( 2-\alpha +\widetilde{\alpha }%
\right) -\frac{1}{q},\frac{\gamma }{2}\left( 2-\alpha +\beta \right)
\right\} .
\end{equation*}%
In this case, the analogue of Corollary \ref{main2} is%
\begin{equation}
\partial _{t}^{\gamma }u+Au=f\left( u\right) +\mathbb{B}z,\text{ holds in }%
V_{\min \left\{ \widetilde{\alpha },\beta \right\} },\text{ for almost all }%
t\in \left( 0,T\right) ,  \label{weaky}
\end{equation}%
and each solution of (\ref{weaky}) belongs to 
\begin{equation*}
W^{1,1+\xi }\left( \left(0,T\right) ;V_{\alpha }\right) \cap L^{\sigma
}\left( (0,T);V_{2+\min \left\{ \widetilde{\alpha },\beta \right\} }\right)
,_{C}\partial _{t}^{\gamma }u\in L^{1+\xi }\left( (0,T);V_{\min \left\{ 
\widetilde{\alpha },\beta \right\} }\right) .
\end{equation*}%
For additional regularity theory of problem (\ref{abs-par}), the restriction 
$\widetilde{\alpha }\in \left[ -1,0\right] $ does not appear to be a
necessary condition. More precisely, in the statement of Corollary \ref%
{main2}, one can assume instead that $\widetilde{\alpha }\in \mathbb{R}_{+}$%
, for as long as $\alpha \in I_{\widetilde{\alpha }}\cap I_{\beta }\neq
\varnothing $ (see, e.g., Theorem \ref{thm-reg} in Section \ref{ss:global}).
However, the restriction that $\widetilde{\alpha }>0$ appears necessary for
the rigourous justification of the $V_{1}$-energy equality for the
corresponding subdiffusive problem. }

\item \emph{The gap in regularity between the initial datum $u_{0}\in
V_{\beta +2}\subsetneqq V_{\alpha }$ and the solution $u\in C(\left[ 0,T%
\right] ;V_{\alpha })$ (but\footnote{\emph{Strictly speaking, this is a
consequence of the singular behavior of $P_{\gamma }\left( t\right) $ near $%
t=0.$}} with $u\notin C(\left[ 0,T\right] ;V_{\beta +2})$) is not of
technical nature. It is due to a complete \textbf{failure} of the semigroup
property for the solution operator associated with (\ref{abs-par}) (see,
e.g., \cite{GW, E2016}, and the references therein). In particular, this
means that the fractional in time problem is not well-posed in the classical
sense formulated by Hadamard; namely, there \textbf{does not exist a
(strongly) continuous} flow map $\Phi :u_{0}\mapsto u\left( t\right) $ in
any scale of the operator spaces $V_{\alpha }$. This is in contrast to the
strong continuity of the flow map for the classical problem when $\gamma =1.$
}
\end{enumerate}
\end{remark}

\section{The optimal control problem}

\label{ss:optimal}

In view of Corollary \ref{main2}, we set the control space to be%
\begin{equation*}
Z_{\rho ,\infty }:=\left\{ z\in C(\left[ 0,T\right] ;L^{2}(D)):\left\Vert
\partial _{t}z\left( t\right) \right\Vert _{L^{2}\left( D\right) }\lesssim
t^{\rho -1},\text{ \ a.e. }0<t\leq T\right\} ,
\end{equation*}%
for some $T<T_{\max }$ (with a value $T$ which we will fix from now on) and $%
0<\rho \leq 1$. Notice\footnote{%
Due to the embedding, we may immediately take $q=\infty $ in Remark \ref%
{theta}. It follows that $\sigma =1+\xi \left( \theta \right) >1$ and $%
\theta =\frac{\gamma \left( 2-\alpha +\beta \right) }{2}$.} that $Z_{\rho
,\infty }$ is a closed (bounded)\ subset of 
\begin{equation*}
W^{1,1+\lambda }(\left(0,T\right) ;L^{2}\left( D\right) )\subset C(\left[ 0,T%
\right] ;L^{2}\left( D\right) ),
\end{equation*}%
for some $\lambda >0$ depending on $\rho $ (i.e, $\left( 1+\lambda \right)
\left( \rho -1\right) >-1$), when we endow it with the norm%
\begin{equation}
\left\Vert z\right\Vert _{Z_{\rho ,\infty }}:=\left\Vert z\right\Vert
_{C([0,T];L^{2}\left( D\right) )}+\sup_{t\in \left[ 0,T\right] }t^{1-\rho
}\left\Vert \partial _{t}z\left( t\right) \right\Vert _{L^{2}\left( D\right)
}.  \label{norm-c}
\end{equation}%
Thus, the control-to-state operator%
\begin{equation*}
\mathcal{S}:Z_{\rho ,\infty }\rightarrow Y_{\theta ,\alpha },\text{ }%
z\longmapsto u=:\mathcal{S}\left( z\right)
\end{equation*}%
is well-defined as a mapping from $Z_{\rho ,\infty }$ into the Banach space%
\begin{equation}
Y_{\theta ,\alpha }=\left\{ u\in W^{1,1+\xi \left( \theta \right) }\left(
\left( 0,T\right) ;V_{\alpha }\right) :\left\vert \partial _{t}u\left(
t\right) \right\vert _{\alpha }\lesssim t^{\theta -1},\text{ a.e. }0<t\leq
T\right\} ;  \label{Y-space}
\end{equation}%
$Y_{\theta ,\alpha }$ is endowed with the natural norm (for some $\xi =\xi
\left( \theta \right) >0,$ depending\footnote{%
Namely, $\xi >0$ is such that $\left( 1+\xi \right) \left( \theta -1\right)
>-1.$} on $\theta >0$)%
\begin{equation*}
\left\Vert u\right\Vert _{Y_{\theta ,\alpha }}:=\sup_{t\in \left[ 0,T\right]
}\left\vert u\left( t\right) \right\vert _{\alpha }+\sup_{t\in \left[ 0,T%
\right] }t^{1-\theta }\left\vert \partial _{t}u\left( t\right) \right\vert
_{\alpha }.
\end{equation*}

We notice first that $\mathcal{S}$ is Lipschitz continuous from $Z_{\rho
,\infty }$ into $C\left( \left[ 0,T\right] ;V_{\alpha }\right) $ (see (\ref%
{LC1})). Secondly, if we impose additional assumptions on $f,$ the state
mapping $\mathcal{S}$ satisfies an improved stability estimate (\ref{LC2}).
To this end, let us denote by $\mathcal{U}$ a nonempty, open and bounded
subset of $Z_{\rho ,\infty }.$

\begin{enumerate}
\item[(H4)] The nonlinearity $f\in C^{1,1}$ induces a bounded\footnote{%
Note that boundedness is a consequence of (\ref{g-map}).} mapping%
\begin{equation}
g_{u}\left( v\right) :=\left( \partial _{u}f\left( u\right) \right)
v:V_{\alpha }\rightarrow V_{\beta },  \label{g-map2}
\end{equation}%
such that, for every $u_{1},u_{2}\in V_{\alpha }$ satisfying $\left\vert
u_{i}\right\vert _{\alpha }\leq R$ ($i=1,2$), there is a constant $C_{R}>0$
such that, 
\begin{equation}
\left\vert g_{u_{1}}\left( v\right) -g_{u_{2}}\left( v\right) \right\vert
_{\beta }\leq C_{R}\left\vert v\right\vert _{\alpha }\left\vert
u_{1}-u_{2}\right\vert _{\alpha }.  \label{g-map}
\end{equation}
\end{enumerate}

\begin{theorem}
\label{state-map}Let the assumptions of Corollary \ref{main2} be satisfied.

\begin{enumerate}
\item[(i)] Then for each $T<T_{\max },$ there exists a constant $K_{1}>0$,
depending on $T,R,f$, such that whenver $z_{1},z_{2}\in \mathcal{U}$ are
given and $u_{1},u_{2}\in Y_{\theta ,\alpha }$ denote the associated
solutions of the state system, we have%
\begin{equation}
\left\Vert u_{1}-u_{2}\right\Vert _{C\left( \left[ 0,T\right] ;V_{\alpha
}\right) }\leq K_{1}\left\Vert z_{1}-z_{2}\right\Vert _{C([0,T];L^{2}\left(
D\right) )}.  \label{LC1}
\end{equation}

\item[(ii)] If in addition, (H4) holds, then there is a constant $K_{2}>0$
such that%
\begin{equation}
\sup_{t\in \left[ 0,T\right] }t^{1-\theta }\left\vert \left( \partial
_{t}u_{1}-\partial _{t}u_{2}\right) \left( t\right) \right\vert _{\alpha
}\leq K_{2}\left\Vert z_{1}-z_{2}\right\Vert _{Z_{\rho ,\infty }}.
\label{LC2}
\end{equation}
\end{enumerate}
\end{theorem}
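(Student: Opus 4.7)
The plan is to work from the mild solution formula of Definition \ref{mild-sol}, combine the smoothing bounds of Proposition \ref{est-SP} with the Lipschitz/bilinear hypotheses (H3)--(H4), and close each inequality via the Gronwall lemma (Lemma \ref{A2}). Throughout we set $\theta:=\gamma(2-\alpha+\beta)/2>0$, which is the natural temporal exponent associated with the kernel $P_\gamma$ acting from $V_\beta$ into $V_\alpha$ in the present setting $\widetilde{\alpha}=\beta$.

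For (i), since $u_1$ and $u_2$ have the common initial datum $u_0$, subtracting the two mild formulas cancels the free term $S_\gamma(t)u_0$ and leaves
\[
u_1(t)-u_2(t)=\int_0^t P_\gamma(t-\tau)[f(u_1)-f(u_2)](\tau)\,d\tau+\int_0^t P_\gamma(t-\tau)\mathbb{B}(z_1-z_2)(\tau)\,d\tau.
\]
Measuring this in $V_\alpha$, I would apply (\ref{s-2}) with $\widetilde{\beta}=\alpha$ and $\widetilde{\alpha}=\beta$ to produce the kernel $(t-\tau)^{\theta-1}$, and then dominate the nonlinear integrand by (H3) and the control integrand by the boundedness of $\mathbb{B}$. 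The contribution of the control then appears as a constant (in $t$) multiple of $\|z_1-z_2\|_{C([0,T];L^2(D))}$ (up to a harmless factor $T^\theta$), and Lemma \ref{A2} closes the resulting Volterra inequality and yields (\ref{LC1}).

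For (ii), we differentiate the mild formula in $t$. Writing the convolution as $\int_0^t P_\gamma(s)h(t-s)\,ds$ and using the chain rule $\partial_\tau f(u)=g_u(\partial_\tau u)$, which is legitimate under (H4) together with the $W^{1,1+\xi}$-regularity from Corollary \ref{main2}, produces
\[
\partial_t u(t)=\partial_t S_\gamma(t)u_0+P_\gamma(t)[f(u_0)+\mathbb{B}z(0)]+\int_0^t P_\gamma(t-\tau)[g_u(\partial_\tau u)+\mathbb{B}\partial_\tau z]\,d\tau.
\]
Subtracting the analogous identity for $u_2$ cancels $\partial_t S_\gamma(t)u_0$ and $P_\gamma(t)f(u_0)$, and leaves a boundary term $P_\gamma(t)\mathbb{B}[z_1(0)-z_2(0)]$, the forced convolution against $\mathbb{B}\partial_\tau(z_1-z_2)$, and a nonlinear convolution. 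For the nonlinear piece I would use the split
\[
g_{u_1}(\partial_\tau u_1)-g_{u_2}(\partial_\tau u_2)=g_{u_1}(\partial_\tau u_1-\partial_\tau u_2)+[g_{u_1}-g_{u_2}](\partial_\tau u_2),
\]
bounding the two summands in $V_\beta$ via (H4), part (i), and the $Y_{\theta,\alpha}$-estimate $|\partial_\tau u_2|_\alpha\lesssim\tau^{\theta-1}$. Collecting the resulting Beta-function identities then yields the Volterra inequality
\[
|\partial_t u_1(t)-\partial_t u_2(t)|_\alpha\leq C\,t^{\theta-1}\|z_1-z_2\|_{Z_{\rho,\infty}}+C\int_0^t(t-\tau)^{\theta-1}|\partial_\tau u_1-\partial_\tau u_2|_\alpha\,d\tau,
\]
to which Lemma \ref{A2}, with both Gronwall exponents equal to $\theta$, delivers the pointwise bound $|\partial_t u_1(t)-\partial_t u_2(t)|_\alpha\lesssim t^{\theta-1}\|z_1-z_2\|_{Z_{\rho,\infty}}$; multiplication by $t^{1-\theta}$ is then exactly (\ref{LC2}).

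The hard part will be the endpoint term $P_\gamma(t)\mathbb{B}[z_1(0)-z_2(0)]$ produced by differentiating the convolution: its size is precisely $t^{\theta-1}$, matching exactly the $t^{1-\theta}$ weight of the $Y_{\theta,\alpha}$ norm, so the $Z_{\rho,\infty}$ control (which bounds $z(0)$ in $L^2(D)$ through the $C([0,T];L^2(D))$-component of (\ref{norm-c})) is essential and cannot be weakened to, say, mere $L^q$-in-time control of $z$. A related subtlety is that the singular convolution $\int_0^t(t-\tau)^{\theta-1}\tau^{\rho-1}\,d\tau=B(\theta,\rho)\,t^{\theta+\rho-1}$ is absorbable into the $t^{\theta-1}$ forcing precisely because $\rho>0$; this is the structural role played by (\ref{z-reg}) in the definition of $Z_{\rho,\infty}$. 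Finally, verifying that $\tau\mapsto f(u(\tau))$ is strongly differentiable in $V_\beta$ with derivative $g_u(\partial_\tau u)$ is a consequence of (H4) combined with the $W^{1,1+\xi}$-regularity of Corollary \ref{main2}, but this is the step that makes (H4), rather than just (H3), indispensable for part (ii).
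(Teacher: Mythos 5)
Your argument is correct and follows essentially the same route as the paper: subtract the mild formulas, use Proposition \ref{est-SP} together with (H3)/(H4) to produce the $(t-\tau)^{\theta-1}$ Volterra kernel, and close with Lemma \ref{A2}; for (ii) the paper likewise differentiates the Duhamel formula (via Theorem \ref{reg-thm}), isolates the endpoint term $P_\gamma(t)\mathbb{B}z(0)$ and the $\mathbb{B}\partial_t z$ convolution, and uses the same product-rule split of $g_{u_1}(\partial_t u_1)-g_{u_2}(\partial_t u_2)$ (merely the symmetric twin of yours, placing $v$ with $g_{u_2}$ instead of $g_{u_1}$). The only cosmetic difference is that for (i) the paper appeals to the extension argument of Lemmas \ref{T1-integral-H1}--\ref{extension} rather than a single global Gronwall step, which changes nothing of substance.
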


\begin{proof}
Set $u:=u_{1}-u_{2}$ and $z:=z_{1}-z_{2}.$ Then, every mild/weak solution $u$
satisfies on $\left( 0,T\right) \subset \left( 0,T_{\max }\right) ,$ 
\begin{equation}
u\left( t\right) =\int_{0}^{t}P_{\gamma }\left( t-\tau \right) \left(
f\left( u_{1}\left( \tau \right) \right) -f\left( u_{2}\left( \tau \right)
\right) \right) d\tau +\int_{0}^{t}P_{\gamma }\left( t-\tau \right) \mathbb{B%
}z\left( \tau \right) d\tau  \label{u-dif1}
\end{equation}%
since $u\left( 0\right) =0$, for $i=1,2,$ whenever $u_{i}\left( 0\right)
=u_{0}$. By Theorem \ref{reg-thm},%
\begin{align}
\partial _{t}u_{i}\left( t\right) & =AP_{\gamma }\left( t\right)
u_{0}+P_{\gamma }\left( t\right) f\left( u_{0}\right) +P_{\gamma }\left(
t\right) \mathbb{B}z_{i}\left( 0\right)  \label{u-dif2} \\
& +\int_{0}^{t}P_{\gamma }\left( t-\tau \right) \mathbb{B\partial }%
_{t}z_{i}\left( \tau \right) d\tau +\int_{0}^{t}P_{\gamma }\left( t-\tau
\right) \partial _{u_{i}}f\left( u_{i}\left( \tau \right) \right) \partial
_{t}u_{i}\left( \tau \right) d\tau ,  \notag
\end{align}%
for almost all $t\in (0,T)\subset \left( 0,T_{\max }\right) .$ Then (\ref%
{LC1}) follows easily employing once again an extension argument for (\ref%
{u-dif1}) to the whole interval $\left( 0,T\right) $, via the proofs of
Lemma \ref{T1-integral-H1} and Lemma \ref{extension}. Let us now set $%
v:=\partial _{t}u_{1}-\partial _{t}u_{2},$ and notice that%
\begin{align}
v\left( t\right) & =\int_{0}^{t}P_{\gamma }\left( t-\tau \right) \left(
g_{u_{1}}\left( \partial _{t}u_{1}\left( \tau \right) \right)
-g_{u_{2}}\left( \partial _{t}u_{1}\left( \tau \right) \right) \right) d\tau
+P_{\gamma }\left( t\right) \mathbb{B}z\left( 0\right)  \label{u-diff3} \\
& +\int_{0}^{t}P_{\gamma }\left( t-\tau \right) \partial _{u_{2}}f\left(
u_{2}\left( \tau \right) \right) v\left( \tau \right) d\tau
+\int_{0}^{t}P_{\gamma }\left( t-\tau \right) \mathbb{B\partial }_{t}z\left(
\tau \right) d\tau .  \notag
\end{align}%
The argument in (\ref{Z4bis}), given $\widetilde{\alpha }=\beta $ and $%
\alpha \in \lbrack \beta ,\beta +2),$ easily yields%
\begin{align*}
\left\vert \int_{0}^{t}P_{\gamma }\left( t-\tau \right) \mathbb{B\partial }%
_{t}z\left( \tau \right) d\tau \right\vert _{\alpha }& \lesssim t^{\frac{%
\gamma \left( 2-\alpha +\beta \right) }{2}+\rho -1}\sup_{t\in \left[ 0,T%
\right] }t^{1-\rho }\left\Vert \partial _{t}z\left( t\right) \right\Vert
_{L^{2}\left( D\right) } \\
& \lesssim t^{\frac{\gamma \left( 2-\alpha +\beta \right) }{2}+\rho
-1}\left\Vert z\right\Vert _{Z_{\rho ,\infty }} \\
& \lesssim t^{\theta -1}T^{\rho }\left\Vert z\right\Vert _{Z_{\rho ,\infty }}
\\
& \lesssim t^{\theta -1}\left\Vert z\right\Vert _{Z_{\rho ,\infty }}
\end{align*}%
while, in light of the boundedness of $g_{u}$ (by (\ref{g-map}), $\left\vert
g_{u}\left( v\right) \right\vert _{\beta }\lesssim \left\vert v\right\vert
_{\alpha }\left( \left\vert u\right\vert _{\alpha }+1\right) ,$ since $0\in
V_{\alpha }$), it follows that%
\begin{equation*}
\left\vert \int_{0}^{t}P_{\gamma }\left( t-\tau \right) \partial
_{u_{2}}f\left( u_{2}\left( \tau \right) \right) v\left( \tau \right) d\tau
\right\vert _{\alpha }\lesssim \int_{0}^{t}\left( t-\tau \right) ^{\frac{%
\gamma \left( 2-\alpha +\beta \right) }{2}-1}\left\vert v\left( \tau \right)
\right\vert _{\alpha }d\tau .
\end{equation*}%
Moreover, in view of (\ref{g-map}) and (\ref{LC1}), we can deduce that 
\begin{align*}
& \left\vert \int_{0}^{t}P_{\gamma }\left( t-\tau \right) \left( \partial
_{u_{1}}f\left( u_{1}\left( \tau \right) \right) -\partial _{u_{2}}f\left(
u_{2}\left( \tau \right) \right) \right) \partial _{t}u_{1}\left( \tau
\right) d\tau \right\vert _{\alpha } \\
& \lesssim \int_{0}^{t}\left( t-\tau \right) ^{\frac{\gamma \left( 2-\alpha
+\beta \right) }{2}-1}\left\vert \partial _{t}u_{1}\left( \tau \right)
\right\vert _{\alpha }d\tau \left\Vert u\right\Vert _{C\left( \left[ 0,T%
\right] ;V_{\alpha }\right) } \\
& \lesssim \int_{0}^{t}\left( t-\tau \right) ^{\frac{\gamma \left( 2-\alpha
+\beta \right) }{2}-1}\tau ^{\theta -1}d\tau \left\Vert u_{1}\right\Vert
_{Y_{\theta ,\alpha }}\left\Vert u\right\Vert _{C\left( \left[ 0,T\right]
;V_{\alpha }\right) } \\
& \lesssim t^{\frac{\gamma \left( 2-\alpha +\beta \right) }{2}+\theta
-1}\left\Vert z\right\Vert _{C([0,T];L^{2}\left( D\right) )} \\
& \lesssim t^{\theta -1}\left\Vert z\right\Vert _{C([0,T];L^{2}\left(
D\right) )}.
\end{align*}%
Finally, we have%
\begin{equation*}
\left\vert P_{\gamma }\left( t\right) \mathbb{B}z\left( 0\right) \right\vert
_{\alpha }\lesssim t^{\frac{\gamma \left( 2-\alpha +\beta \right) }{2}%
-1}\left\vert \mathbb{B}z\left( 0\right) \right\vert _{\beta =\widetilde{%
\alpha }}\lesssim t^{\frac{\gamma \left( 2-\alpha +\beta \right) }{2}%
-1}\left\Vert z\left( 0\right) \right\Vert _{L^{2}\left( D\right) }\lesssim
t^{\theta -1}\left\Vert z\right\Vert _{Z_{\rho ,\infty }}.
\end{equation*}%
Collecting the previous estimates, from (\ref{u-diff3}) we find that%
\begin{equation}
\left\vert v\left( t\right) \right\vert _{\alpha }\lesssim t^{\theta
-1}\left\Vert z\right\Vert _{Z_{\rho ,\infty }}+\int_{0}^{t}\left( t-\tau
\right) ^{\frac{\gamma \left( 2-\alpha +\beta \right) }{2}-1}\left\vert
v\left( \tau \right) \right\vert _{\alpha }d\tau .  \label{u-diff4}
\end{equation}%
Application of the Gronwall inequality (see Lemma \ref{A2}) then yields%
\begin{equation*}
\left\vert v\left( t\right) \right\vert _{\alpha }\lesssim t^{\theta
-1}\left\Vert z\right\Vert _{Z_{\rho ,\infty }},
\end{equation*}%
from which we can immediately infer (\ref{LC2}). The proof is complete.
\end{proof}

\begin{remark}
\emph{Part (ii) of Theorem \ref{state-map} implies that the solution
operator $\mathcal{S}$ is (Lipschitz) continuous when viewed as a mapping
from $Z_{\rho ,\infty }$ into $Y_{\theta ,\alpha }$ (with $\theta =\gamma
\left( 2-\alpha +\beta \right) /2\in \left( 0,1\right) $). Indeed, (\ref{LC2}%
) yields%
\begin{equation}
\left\Vert u_{1}-u_{2}\right\Vert _{Y_{\theta ,\alpha }}\leq K_{3}\left\Vert
z_{1}-z_{2}\right\Vert _{Z_{\rho ,\infty }},  \label{LC3}
\end{equation}%
for some $K_{3}>0$, depending only on $K_{1},K_{2}.$ }
\end{remark}

Now we define the cost functional, i.e., $J(u,z):=J_{1}(u)+J_{2}(z)$.
Consider $D_{1}$ and $D_{2}$ to be the effective domains of the (proper)
functionals $J_{1}$ and $J_{2}$, respectively. We let $J_{1}:X_{1}%
\rightarrow (-\infty ,+\infty ]$ and $J_{2}:X_{2}:=Z_{\rho ,\infty
}\rightarrow (-\infty ,+\infty ]$ (with the convention that $J_{i}\left(
u\right) =+\infty ,$ for $u\in X_{i}\backslash D_{i},$ $i=1,2$), and as a
result we can write the reduced minimization problem 
\begin{equation}
\min_{z\in Z_{ad}}\mathcal{J}(z):=J_{1}(\mathcal{S}(z))+J_{2}(z)=J\left( 
\mathcal{S}\left( z\right) ,z\right) .  \label{RPS}
\end{equation}%
First, we assume there is an admissible set $Z_{ad}$ which is a convex and
closed subset of $Z_{\rho ,\infty }$. We impose the following (specific)
assumptions on $J_{1}$ and $J_{2}$.

\begin{enumerate}
\item[(H5)] $J_{1}:D_{1}:=Y_{\rho ,\widetilde{\alpha }}\rightarrow \mathbb{R}
$ is weakly lower semicontinuous (for some $\rho >0$); $J_{2}:D_{2}:=Z_{ad}%
\rightarrow \mathbb{R}$ is convex, lower-semicontinuous and the level set $%
\left\{ z\in Z_{ad}:J_{2}\left( z\right) \leq \kappa \right\} $ is bounded
for some $\kappa \in \mathbb{R}$.
\end{enumerate}

\begin{theorem}[\textbf{Existence of optimal controls}]
\label{thm:exist} Let the assumptions of Corollary~\ref{main2} hold. Assume
in addition that (H5) holds. Then the optimal control problem \eqref{RPS}
admits a solution.
\end{theorem}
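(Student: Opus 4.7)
The plan is to apply the direct method of the calculus of variations. Let $\{z_n\}\subset Z_{ad}$ be a minimizing sequence for $\mathcal{J}$, and set $u_n:=\mathcal{S}(z_n)$. By (H5), $J_2$ has bounded sublevel sets, so along the minimizing sequence $J_2(z_n)$ is eventually bounded from above, which combined with the weak lower semicontinuity of $J_1$ on the image $\mathcal{S}(\{z_n\})$ (which remains bounded in $Y_{\theta,\alpha}\hookrightarrow Y_{\rho,\widetilde{\alpha}}$ by Theorem \ref{state-map}) forces $\inf\mathcal{J}>-\infty$ and the boundedness of $\{z_n\}$ in $Z_{\rho,\infty}$. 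Since $Z_{\rho,\infty}$ is a bounded subset of the reflexive space $W^{1,1+\lambda}((0,T);L^2(D))$, I extract a subsequence (not relabeled) with $z_n\rightharpoonup z^*$ weakly in that space. Because $Z_{ad}$ is convex and closed in $Z_{\rho,\infty}$, it is weakly closed, so $z^*\in Z_{ad}$; lower semicontinuity of the pointwise norm $t\mapsto\left\Vert\partial_t z(t)\right\Vert_{L^2(D)}$ under the weak limit keeps $z^*\in Z_{\rho,\infty}$ as well.

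The heart of the proof is to identify $u^*:=\mathcal{S}(z^*)$ as the limit, along a further subsequence, of $\{u_n\}$ in a topology adapted to the lower semicontinuity of $J_1$. From the definition of $Y_{\theta,\alpha}$ one has a uniform bound on $u_n$ in $C([0,T];V_\alpha)$ together with uniform equicontinuity in $t$ on $[\delta,T]$ for any $\delta>0$ via $\left\vert\partial_t u_n(t)\right\vert_\alpha\lesssim t^{\theta-1}$. Combined with the compactness of $(I+A)^{-1}$ on $L^2(\Omega)$ (assumption (HA)), which makes the embedding $V_\alpha\hookrightarrow V_{\alpha-\varepsilon}$ compact, an Aubin--Lions type argument yields a further subsequence with $u_n\to u^*$ strongly in $C([\delta,T];V_{\alpha-\varepsilon})$ for any small $\varepsilon,\delta>0$. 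Using the local Lipschitz property (H3) of $f$, one passes to the limit in the nonlinear term, while the linear forcing $P_\gamma\ast\mathbb{B}z_n$ converges in $C([0,T];V_\alpha)$ thanks to the weak convergence of $\mathbb{B}z_n$ against the smoothing kernel $P_\gamma$ (whose decay near the origin is controlled by Proposition \ref{est-SP}). One concludes that $u^*$ satisfies the mild formulation (Definition \ref{mild-sol}) with control $z^*$, and by uniqueness (Theorem \ref{prelim}), $u^*=\mathcal{S}(z^*)$. The weak lower semicontinuity of $J_1$ on $Y_{\rho,\widetilde{\alpha}}$ and of $J_2$ on $Z_{ad}$ (convex and lsc, hence weakly lsc) then yields $\mathcal{J}(z^*)\leq\liminf_n\mathcal{J}(z_n)=\inf\mathcal{J}$, so $z^*$ is an optimal control.

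The main obstacle I anticipate is the passage to the limit in the nonlinear term $f(u_n)$: the Lipschitz estimates of Theorem \ref{state-map} would give $u_n\to u^*$ only from strong convergence of $z_n$ in $C([0,T];L^2(D))$, which weak compactness alone does not deliver since the control space $D$ carries no intrinsic compactness. My plan is therefore to extract compactness entirely from the state side, via the compact-resolvent hypothesis (HA) together with the singular temporal weight $t^{\theta-1}$ built into the space $Y_{\theta,\alpha}$. A secondary technicality is the uniform integrability of $P_\gamma(t-\cdot)\mathbb{B}z_n(\cdot)$ near $t=0$, which is handled by the choice of $q$ in (H2) matched to $\widetilde{\alpha}$ via Proposition \ref{est-SP}.
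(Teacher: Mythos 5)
Your overall strategy (direct method: weak compactness of the minimizing controls in the reflexive space $W^{1,1+\lambda}((0,T);L^{2}(D))$, compactness extracted from the state side rather than the control side, passage to the limit in the mild formulation, then weak lower semicontinuity of $J_{1}$ and $J_{2}$) is the same as the paper's, and you correctly identify the central obstacle --- that weak convergence of $z_{n}$ alone cannot drive the convergence of $u_{n}$ through the Lipschitz estimate of Theorem \ref{state-map}. However, your compactness step has a genuine gap. You obtain, via Arzel\`a--Ascoli from the uniform $Y_{\theta,\alpha}$ bound and the compact embedding $V_{\alpha}\overset{c}{\hookrightarrow}V_{\alpha-\varepsilon}$, convergence $u_{n}\to u_{\ast}$ only in $C([\delta,T];V_{\alpha-\varepsilon})$. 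But hypothesis (H3) is a local Lipschitz property of $f$ from $V_{\alpha}$ into $V_{\beta}$ at the level $\alpha$, not at $\alpha-\varepsilon$; from boundedness in $V_{\alpha}$ plus strong convergence in $V_{\alpha-\varepsilon}$ you only get weak convergence of $u_{n}(t)$ in $V_{\alpha}$, and (H3) gives no weak-to-weak (demi)continuity of $f$, so the claim $f(u_{n})\to f(u_{\ast})$ does not follow as written. The convergence must land in $V_{\alpha}$ itself.

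The paper closes exactly this gap by invoking the extra \emph{spatial} regularity from Corollary \ref{main2}: the states satisfy a uniform bound in $W^{1,1+\xi}((0,T);V_{\alpha})\cap L^{\sigma}((0,T);V_{\widetilde{\alpha}+2})$, and since $V_{\widetilde{\alpha}+2}\overset{c}{\hookrightarrow}V_{\alpha}$ (by (HA) and $\alpha<\widetilde{\alpha}+2$), the Aubin--Lions--Simon lemma yields $u_{n}\to u_{\ast}$ strongly in $L^{\sigma}((0,T);V_{\alpha})$ --- i.e., a.e.\ convergence in the space where (H3) applies --- whence $f(u_{n})\to f(u_{\ast})$ in $L^{\sigma}((0,T);V_{\beta})$ and the convolution term passes to the limit via the $P_{\gamma}$ estimates. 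Your argument can be repaired by adding the $L^{\sigma}((0,T);V_{\widetilde{\alpha}+2})$ bound to your compactness package (either via Aubin--Lions directly, or by interpolating your $V_{\alpha-\varepsilon}$ convergence against it to upgrade to $V_{\alpha}$), but without it the passage to the limit in the nonlinearity is not justified. A minor secondary point: your claim that $P_{\gamma}\ast\mathbb{B}z_{n}$ converges in $C([0,T];V_{\alpha})$ is stronger than what weak convergence of $z_{n}$ delivers directly; weak convergence of the integral for each $t$ (or strong convergence after exploiting the smoothing of $P_{\gamma}$ into $V_{\alpha'}\overset{c}{\hookrightarrow}V_{\alpha}$ for some $\alpha'>\alpha$) is what is available, and is all that is needed to identify the limit equation.
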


\begin{proof}
We begin by noticing an {\textsf{infimizing sequence}} $\{z_{n}\}_{n\in {%
\mathbb{N}}}$ always exists (see \cite[pg. 84]%
{HAttouch_GButtazzo_GMichaille_2014a}). Let $\{z_{n}\}_{n\in {\mathbb{N}}}$
be a{~\textsf{infimizing}} sequence, that is, $z_{n}\in Z_{ad}$ and $u_{n}=%
\mathcal{S}(z_{n})$, for $n\in {\mathbb{N}}$, are such that $\mathcal{J}%
(z_{n})\rightarrow j$ as $n\rightarrow \infty $. Moreover, $u_{n}\left(
0\right) =u_{0n}\in V_{\alpha }$ are such that $u_{0n}\rightharpoonup u_{0}$
weakly in $V_{\alpha }$. Notice that 
\begin{equation*}
Z_{ad}\subseteq \mathcal{X}:=W^{1,1+\lambda }\left( \left( 0,T\right)
;L^{2}\left( D\right) \right) ,
\end{equation*}%
where $\mathcal{X}$ is reflexive. Since $Z_{ad}$ is a closed and convex
subspace of $\mathcal{X}$, $Z_{ad}$ is also reflexive, and therefore, by
taking a subsequence if necessary, we may assume that $\{z_{n}\}_{n\in {%
\mathbb{N}}}$ converges weakly in the space $Z_{\rho ,\infty },$ to some $%
z_{\ast }\in Z_{ad}\subset Z_{\rho ,\infty }$ (since $Z_{ad}$ is weakly
compact in the topology of $Z_{\rho ,\infty };$ see \cite[Proposition 3.2.8
and Theorem 3.2.1]{HAttouch_GButtazzo_GMichaille_2014a}).

Next, we aim to show that the state $\{u_{n}\}_{n\in {\mathbb{N}}}$
converges, as $n\rightarrow \infty $, to some $u_{\ast }$ in a suitable
sense, and that $(u_{\ast },z_{\ast })$ satisfies the state equation $%
u_{\ast }=\mathcal{S}(z_{\ast })$. More precisely, $z_{\ast }$ becomes the
desired optimal control for the problem, owing to the (weak)
lower-sequential semicontinuity of the cost functional $\mathcal{J}$. By
virtue of the proof of Corollary \ref{main2}, we observe that $u_{n}$ is
bounded uniformly (with respect to $n\in {\mathbb{N}}$),%
\begin{equation}
u_{n}\in C\left( \left[ 0,T\right] ;V_{\alpha }\right)  \label{mbed2}
\end{equation}%
and%
\begin{equation}
u_{n}\in W^{1,1+\xi }\left( \left( 0,T\right) ;V_{\alpha }\right) \cap
L^{\sigma =1+\xi }\left( (0,T);V_{\widetilde{\alpha }+2}\right) \overset{c}{%
\hookrightarrow }L^{\sigma }\left( (0,T);V_{\alpha }\right) ,  \label{mbed}
\end{equation}%
since $V_{\widetilde{\alpha }+2}\overset{c}{\hookrightarrow }V_{\alpha
}\hookrightarrow V_{\widetilde{\alpha }}=V_{\beta }$,  and $\widetilde{%
\alpha }\leq \alpha <\widetilde{\alpha }+2$ (owing to $\left( I+A\right)
^{-1}$ being compact in $L^{2}\left( \Omega \right) $). We recall that the
embedding in (\ref{mbed}) is also compact due to the (standard)
Aubin-Lions-Simon compactness lemma. It follows that, as $n\rightarrow
\infty ,$%
\begin{equation*}
u_{n}\rightarrow u_{\ast }\text{ strongly in }L^{\sigma }\left(
(0,T);V_{\alpha }\right) .
\end{equation*}%
Together with (H3), this strong convergence implies that%
\begin{equation*}
f\left( u_{n}\right) \rightarrow f\left( u_{\ast }\right) \text{ strongly in 
}L^{\sigma }((0,T);V_{\beta }),
\end{equation*}%
which is enough to pass to the limit in a standard way, in the sequence of
mild solutions%
\begin{equation}
u_{n}=S_{\gamma }\left( t\right) u_{0n}+\int_{0}^{t}P_{\gamma }\left( t-\tau
\right) f\left( u_{n}\left( \tau \right) \right) d\tau
+\int_{0}^{t}P_{\gamma }\left( t-\tau \right) \mathbb{B}z_{n}\left( \tau
\right) d\tau .  \label{seq-un}
\end{equation}%
Indeed, while it is easy to pass to the limit in the first and last summands
on the right-hand side of (\ref{seq-un}), for the second convolution term we
have%
\begin{equation*}
\left\vert P_{\gamma }\ast \left( f\left( u_{n}\right) -f\left( u_{\ast
}\right) \right) \right\vert _{V_{\beta }}\leq t^{\left( \gamma -1\right) 
\frac{\sigma }{\xi }+1}\left\Vert f\left( u_{n}\right) -f\left( u_{\ast
}\right) \right\Vert _{L^{\sigma }\left( 0,T;V_{\beta }\right) },
\end{equation*}%
for all $0<\delta \leq t\leq T$. In particular, we have established the
strong convergence in $C((0,T];V_{\beta }),$ of 
\begin{equation*}
\int_{0}^{t}P_{\gamma }\left( t-\tau \right) f\left( u_{n}\left( \tau
\right) \right) d\tau \rightarrow \int_{0}^{t}P_{\gamma }\left( t-\tau
\right) f\left( u_{\ast }\left( \tau \right) \right) d\tau ,\mbox{ as }
n\to\infty
\end{equation*}%
for any $T<T_{\max }$. Therefore, there holds in $V_{\beta },$ for almost
all $t\in \left( 0,T\right) ,$%
\begin{equation*}
u_{\ast }=S_{\gamma }\left( t\right) u_{0}+\int_{0}^{t}P_{\gamma }\left(
t-\tau \right) f\left( u_{\ast }\left( \tau \right) \right) d\tau
+\int_{0}^{t}P_{\gamma }\left( t-\tau \right) \mathbb{B}z_{\ast }\left( \tau
\right) d\tau .
\end{equation*}%
Due to (\ref{mbed2}), clearly $u_{\ast }\in C\left( \left[ 0,T\right]
;V_{\alpha }\right) $; in fact, one may conclude as in the proof of
Corollary \ref{main2} that $u_{\ast }\in Y_{\theta ,\alpha }\subseteq
Y_{\theta ,\widetilde{\alpha }}$ (for each $\alpha \geq \widetilde{\alpha }$%
) is a solution in the sense of Theorem \ref{main1}$.$

The element $z_{\ast }$ is the right candidate for the optimal control.
Indeed, $z_{\ast }$ is the minimizer. We first notice that since $J_{2}$ is
convex, proper, and lower-semicontinuous, therefore it is weakly
lower-semicontinuous with respect to the $X_{2}$-norm\ topology (see \cite[%
Theorem~3.3.3]{HAttouch_GButtazzo_GMichaille_2014a}). We have 
\begin{align*}
\inf_{z\in Z_{ad}}\mathcal{J}(z)& =\liminf_{n\rightarrow \infty }\mathcal{J}%
(z_{n})\geq \liminf_{n\rightarrow \infty }\mathcal{J}_{1}(\mathcal{S}%
(z_{n}))+\liminf_{n\rightarrow \infty }\mathcal{J}_{2}(z_{n}) \\
& \geq \mathcal{J}_{1}(\mathcal{S}(z_{\ast }))+\mathcal{J}_{2}(z_{\ast
})=j(z_{\ast }).
\end{align*}%
The proof is complete.
\end{proof}

Our next goal is to show differentiability of the control-to-state operator.
We begin with another assumption on $f.$

\begin{enumerate}
\item[(H4bis)] The nonlinearity $f\in C^{2,1}\left( \mathbb{R}\right) $
induces (for a fixed $u\in V_{\alpha }$) a bounded\footnote{%
Note that boundedness of $b_{u}$ is a consequence of (\ref{b-map}).}
nonlinear form%
\begin{equation*}
b_{u}\left( v,w\right) :=\left( \partial _{u}^{2}f\left( u\right) \right)
vw:V_{\alpha }\times V_{\alpha }\rightarrow V_{\beta },
\end{equation*}%
such that, for every $u_{1},u_{2}\in V_{\alpha },$ satisfying $\left\vert
u_{i}\right\vert _{\alpha }\leq R$ ($i=1,2$), there is a constant $C_{R}>0$
such that, 
\begin{equation}
\left\vert b_{u_{1}}\left( v,w\right) -b_{u_{2}}\left( v,w\right)
\right\vert _{\beta }\leq C_{R}\left\vert v\right\vert _{\alpha }\left\vert
w\right\vert _{\alpha }\left\vert u_{1}-u_{2}\right\vert _{\alpha }.
\label{b-map}
\end{equation}
\end{enumerate}

Suppose now that $z_{\ast }\in Z_{ad}$ is a local minimizer for the control
problem, and let $u_{\ast }=\mathcal{S}\left( z_{\ast }\right) $ be the
associated state. We consider, for a fixed $h\in \mathcal{U}$, the
linearized system:%
\begin{equation}
\partial _{t}^{\gamma }\eta \left( t\right) =-A\eta \left( t\right)
+f^{^{\prime }}\left( u_{\ast }\left( t\right) \right) \eta \left( t\right) +%
\mathbb{B}h\text{, }\;\;\eta \left( 0\right) =0,\text{ in }\Omega .
\label{lin-sys2}
\end{equation}%
We now show that problem (\ref{lin-sys2}) admits for every $h\in Z_{\rho
,\infty },$ a unique solution $\eta \in Y_{\theta ,\alpha }$ (in the sense
of Corollary \ref{main2}), and that the linear mapping 
\begin{equation}
\Phi :Z_{\rho ,\infty }\rightarrow Y_{\theta ,\alpha },\text{ }h\mapsto \eta
:=\eta ^{h}  \label{linear-map}
\end{equation}%
is continuous from $Z_{\rho ,\infty }$ into $Y_{\theta ,\alpha }$. Namely,
there is a constant $K_{4}>0$ such that%
\begin{equation}
\left\Vert \eta \right\Vert _{Y_{\theta ,\alpha }}\leq K_{4}\left\Vert
h\right\Vert _{Z_{\rho ,\infty }}.  \label{LC4}
\end{equation}

\begin{lemma}
\label{lem-lc}Let the assumptions of Corollary~\ref{main2} hold, and in
addition, assume (H4) and (H4bis). Then the above statement for %
\eqref{linear-map} holds. Moreover, $\eta \in L^{1+\xi }\left( (0,T);V_{2+%
\widetilde{\alpha }}\right) $ and $\partial _{t}^{\gamma }\eta \in L^{1+\xi
}\left( (0,T);V_{\widetilde{\alpha }}\right) ,$ for the same value $\xi >0$
defined previously.
\end{lemma}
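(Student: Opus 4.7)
The plan is to adapt the machinery of Lemma~\ref{T1-integral-H1} and the stability analysis of Theorem~\ref{state-map} to the \emph{linear} system~\eqref{lin-sys2}. Linearity in $\eta$ (with fixed reference state $u_*\in Y_{\theta,\alpha}$) removes the blow-up alternative: local existence extends directly to the prescribed $T<T_{\max}$, and the $Y_{\theta,\alpha}$-estimate~\eqref{LC4} is obtained by a singular Gronwall bound via Lemma~\ref{A2}.

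Concretely, rewrite~\eqref{lin-sys2} in mild form,
\begin{equation*}
\eta(t)=\int_0^t P_\gamma(t-\tau)\,g_{u_*(\tau)}(\eta(\tau))\,d\tau+\int_0^t P_\gamma(t-\tau)\,\mathbb{B}h(\tau)\,d\tau,
\end{equation*}
using $\eta(0)=0$ and $g_u(v):=f'(u)v$ from (H4). Since $u_*\in C([0,T];V_\alpha)$ is fixed, (H4) yields $|g_{u_*(\tau)}(v)|_\beta\lesssim(1+\|u_*\|_{C([0,T];V_\alpha)})|v|_\alpha$ uniformly in $\tau$. Combined with~\eqref{s-2}, this reduces existence and uniqueness of $\eta\in C([0,T];V_\alpha)$ to a contraction argument identical to that of Lemma~\ref{T1-integral-H1}, extended to $[0,T]$ via uniform step size, yielding $\|\eta\|_{C([0,T];V_\alpha)}\lesssim\|h\|_{Z_{\rho,\infty}}$. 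Formally differentiating the mild equation (rigorous a posteriori via Theorem~\ref{reg-thm}) and using $\eta(0)=0$ give
\begin{align*}
\partial_t\eta(t)&=P_\gamma(t)\,\mathbb{B}h(0)+\int_0^t P_\gamma(t-\tau)\,\mathbb{B}\partial_\tau h(\tau)\,d\tau\\
&\quad+\int_0^t P_\gamma(t-\tau)\,b_{u_*(\tau)}(\partial_tu_*(\tau),\eta(\tau))\,d\tau+\int_0^t P_\gamma(t-\tau)\,g_{u_*(\tau)}(\partial_t\eta(\tau))\,d\tau.
\end{align*}
The first two terms are bounded by $t^{\theta-1}\|h\|_{Z_{\rho,\infty}}$ exactly as in the proof of Theorem~\ref{state-map} (recall $\theta=\gamma(2-\alpha+\beta)/2$ and the definition of the $Z_{\rho,\infty}$-norm). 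The third term uses (H4bis) to estimate $|b_{u_*(\tau)}(\partial_tu_*(\tau),\eta(\tau))|_\beta\lesssim|\partial_tu_*(\tau)|_\alpha|\eta(\tau)|_\alpha\lesssim\tau^{\theta-1}\|h\|_{Z_{\rho,\infty}}$, and the beta-function identity $\int_0^t(t-\tau)^{\theta-1}\tau^{\theta-1}\,d\tau\lesssim t^{2\theta-1}\leq T^\theta t^{\theta-1}$ closes it. Absorbing the fourth term through Lemma~\ref{A2} then yields $|\partial_t\eta(t)|_\alpha\lesssim t^{\theta-1}\|h\|_{Z_{\rho,\infty}}$, i.e.,~\eqref{LC4}.

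Finally, once $\partial_t\eta\in L^{1+\xi}((0,T);V_\alpha)$ is in hand, Proposition~\ref{P1} identifies $\partial_t^\gamma\eta$ with $g_{1-\gamma}\ast\partial_t\eta$, and Young's convolution inequality places $\partial_t^\gamma\eta\in L^{1+\xi}((0,T);V_\alpha)\hookrightarrow L^{1+\xi}((0,T);V_{\widetilde{\alpha}})$. Comparison in~\eqref{lin-sys2}, together with $g_{u_*}(\eta)\in C([0,T];V_{\widetilde{\alpha}})$ and $\mathbb{B}h\in L^{1+\xi}((0,T);V_{\widetilde{\alpha}})$, then delivers $A\eta\in L^{1+\xi}((0,T);V_{\widetilde{\alpha}})$, hence $\eta\in L^{1+\xi}((0,T);V_{2+\widetilde{\alpha}})$, exactly as in Corollary~\ref{main2}. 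The main obstacle is the third term in the derivative expression, which convolves the singular factor $\partial_tu_*\sim\tau^{\theta-1}$ with $\eta$: it is here that assumption (H4bis) is essential, and closure of the resulting Volterra-type inequality relies on the key property (highlighted in the footnote to Lemma~\ref{A2}) that the final exponent in~\eqref{gl} is independent of $\beta$.
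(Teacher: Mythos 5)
Your proof is correct and follows essentially the same route as the paper's: mild formulation plus a contraction argument for existence, the differentiated Duhamel formula containing the $P_{\gamma }(t)\mathbb{B}h(0)$ and $b_{u_{\ast }}(\partial _{t}u_{\ast },\eta )$ terms, the singular Gronwall inequality of Lemma \ref{A2} to absorb the $g_{u_{\ast }}(\partial _{t}\eta )$ convolution and obtain $|\partial _{t}\eta (t)|_{\alpha }\lesssim t^{\theta -1}\Vert h\Vert _{Z_{\rho ,\infty }}$, and comparison in the equation for the final $L^{1+\xi }((0,T);V_{2+\widetilde{\alpha }})$ regularity. Your displayed derivative formula is in fact the corrected version of the one printed in the paper, whose second summand is a duplicate of the first and should read $\int_{0}^{t}P_{\gamma }(t-\tau )\,b_{u_{\ast }(\tau )}(\partial _{t}u_{\ast }(\tau ),\eta (\tau ))\,d\tau $.
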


\begin{proof}
The existence of a (unique) mild solution follows exactly along the lines of
Theorem \ref{prelim}, and is based on the formula%
\begin{equation*}
\eta \left( t\right) =\int_{0}^{t}P_{\gamma }\left( t-\tau \right)
f^{^{\prime }}\left( u_{\ast }\left( \tau \right) \right) \eta \left( \tau
\right) d\tau +\int_{0}^{t}P_{\gamma }\left( t-\tau \right) \mathbb{B}%
h\left( \tau \right) d\tau .
\end{equation*}%
We skip the (basic) details for the sake of brevity, and focus mainly on the
stability estimate (\ref{LC4}). Since $\eta $ is also differentiable for
almost all $t\in \left( 0,T\right) ,$ we have%
\begin{align*}
\partial _{t}\eta \left( t\right) & =\int_{0}^{t}P_{\gamma }\left( t-\tau
\right) f^{^{\prime }}\left( u_{\ast }\left( \tau \right) \right) \partial
_{t}\eta \left( \tau \right) d\tau \\
& +\int_{0}^{t}P_{\gamma }\left( t-\tau \right) f^{^{\prime }}\left( u_{\ast
}\left( \tau \right) \right) \partial _{t}\eta \left( \tau \right) d\tau \\
& +\int_{0}^{t}P_{\gamma }\left( t-\tau \right) \mathbb{B\partial }%
_{t}h\left( \tau \right) d\tau +P_{\gamma }\left( t\right) \mathbb{B}h\left(
0\right).
\end{align*}%
This is due to the fact that if $L$ is continuous at $t=0$ and $L$ is of
bounded variation on $\left( 0,T\right) $, we have that 
\begin{equation*}
\frac{d}{dt}\left( P_{\gamma }\ast L\right) \left( t\right) =P_{\gamma
}\left( t\right) L\left( 0\right) +\left( P_{\gamma }\ast \partial
_{t}L\right) \left( t\right) ,\text{ for }t>0.
\end{equation*}%
Following the basic argument developed in the proof of (\ref{LC1}) (see
Theorem \ref{state-map}), it is first easy to see that%
\begin{equation}
\left\Vert \eta \right\Vert _{C\left( \left[ 0,T\right] ;V_{\alpha }\right)
}\leq K_{5}\left\Vert h\right\Vert _{C([0,T];L^{2}\left( D\right) )},
\label{est-10}
\end{equation}%
for some $K_{5}>0$ independent of $h$. Similarly, arguing as in the proof of
(\ref{u-diff3})-(\ref{u-diff4}), owing to the boundedness of the form $%
b_{u}:V_{\alpha }\times V_{\alpha }\rightarrow V_{\beta }$, one has%
\begin{equation}
\left\vert \partial _{t}\eta \left( t\right) \right\vert _{\alpha }\lesssim
t^{\theta -1}\left\Vert h\right\Vert _{Z_{\rho ,\infty }}+\int_{0}^{t}\left(
t-\tau \right) ^{\frac{\gamma \left( 2-\alpha +\beta \right) }{2}%
-1}\left\vert \partial _{t}\eta \left( \tau \right) \right\vert _{\alpha
}d\tau .  \label{est-11}
\end{equation}%
The application of Gronwall lemma to (\ref{est-11}), together with (\ref%
{est-10}), implies the desired conclusion (\ref{LC4}). The final regularity
on $\eta $ can be deduced as in the proof of Corollary \ref{main2} (see also
the proof of Proposition \ref{reg-lin} below).
\end{proof}

\begin{lemma}
\label{lem-lc2}Let the assumptions of Lemma~\ref{lem-lc} hold. Then the
following statements hold:

\begin{enumerate}
\item[(i)] Let $z_{\ast }(=z)\in \mathcal{U}$ be arbitrary. Then the
control-to-state mapping $\mathcal{S}:Z_{\rho ,\infty }\rightarrow Y_{\theta
,\alpha }$ is (Frechet) differentiable at $z_{\ast }$, and the frechet
derivative $d\mathcal{S}\left( z_{\ast }\right) $ is given by $d\mathcal{S}%
\left( z_{\ast }\right) \left( h\right) =\eta ,$ where for any given $h\in
Z_{\rho ,\infty }$, the function $\eta $ denotes the solution of the
linearized system (\ref{lin-sys2}).

\item[(ii)] The mapping $d\mathcal{S}\left( z_{\ast }\right) :\mathcal{%
U\rightarrow L}\left( Z_{\rho ,\infty },Y_{\theta ,\alpha }\right) ,$ $%
z_{\ast }\mapsto d\mathcal{S}\left( z_{\ast }\right) $ is Lipschitz
continuous on $\mathcal{U}$ in the following sense: there exists a constant $%
K_{6}>0$ such that for all $z_{1},z_{2}\in \mathcal{U}$ and all $h\in 
\mathcal{U}$ the following estimate holds: 
\begin{equation*}
\left\Vert d\mathcal{S}\left( z_{1}\right) h-d\mathcal{S}\left( z_{2}\right)
h\right\Vert _{Y_{\theta ,\alpha }}\leq K_{6}\left\Vert
z_{1}-z_{2}\right\Vert _{Z_{\rho ,\infty }}\left\Vert h\right\Vert _{Z_{\rho
,\infty }}.
\end{equation*}
\end{enumerate}
\end{lemma}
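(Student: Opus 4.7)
\emph{Strategy.} For (i), I would identify the Fréchet derivative of $\mathcal{S}$ at $z_\ast$ with the linear map $h\mapsto \eta^h$ furnished by Lemma \ref{lem-lc}, and control the remainder via a second-order Taylor expansion that uses (H4bis). For (ii), I would subtract the two linearized systems built around $u_1=\mathcal{S}(z_1)$ and $u_2=\mathcal{S}(z_2)$ and apply the Lipschitz bound (H4) on $\partial_u f$. Both estimates rely critically on Lemma \ref{A2} and on Proposition \ref{est-SP}, and the proof structure closely mirrors that of Theorem \ref{state-map}.

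\emph{Proof of (i).} Fix $h\in Z_{\rho,\infty}$ small enough that $z_\ast+h\in\mathcal{U}$ (possible since $\mathcal{U}$ is open). With $u_h:=\mathcal{S}(z_\ast+h)$, $u_\ast:=\mathcal{S}(z_\ast)$, and $\eta:=\eta^h$ solving (\ref{lin-sys2}), the residual $r:=u_h-u_\ast-\eta$ satisfies $r(0)=0$ and
\[
\partial_t^\gamma r + A r \;=\; \mathcal{R} \;+\; f'(u_\ast)\,r,
\]
where $\mathcal{R}:=f(u_h)-f(u_\ast)-f'(u_\ast)(u_h-u_\ast)$; note that the control contribution $\mathbb{B}h$ cancels in the source. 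Writing $\mathcal{R}(t)$ via the integral form of Taylor's theorem and invoking (H4bis) gives $|\mathcal{R}(t)|_\beta \lesssim |u_h(t)-u_\ast(t)|_\alpha^{2}$, so Theorem \ref{state-map}(ii) yields $\|\mathcal{R}\|_{C([0,T];V_\beta)} \lesssim \|h\|_{Z_{\rho,\infty}}^{2}$. Substituting into the mild formula for $r$, estimating $P_\gamma(t-\tau)$ via (\ref{s-2}), and applying Lemma \ref{A2} produces $\|r\|_{C([0,T];V_\alpha)}\lesssim \|h\|_{Z_{\rho,\infty}}^{2}$. The weighted bound on $t^{1-\theta}|\partial_t r(t)|_\alpha$ then follows by differentiating the mild expression for $r$ and repeating the chain of estimates from (\ref{u-diff3})--(\ref{u-diff4}); crucially, no boundary term $P_\gamma(t)\mathbb{B}(\cdot)(0)$ appears here (the control already cancelled), and the $\tau^{\theta-1}$ weight on $\partial_t u_h,\partial_t u_\ast$ is absorbed into the $t^{\theta-1}$ profile exactly as in Theorem \ref{state-map}(ii). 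Altogether $\|r\|_{Y_{\theta,\alpha}}=O(\|h\|_{Z_{\rho,\infty}}^{2})$, which establishes Fréchet differentiability with $d\mathcal{S}(z_\ast)h=\eta^h$.

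\emph{Proof of (ii).} For $z_1,z_2\in\mathcal{U}$ and $h\in Z_{\rho,\infty}$, set $u_i:=\mathcal{S}(z_i)$, $\eta_i:=d\mathcal{S}(z_i)h$, and $w:=\eta_1-\eta_2$. Then $w(0)=0$ and, since both linearized problems share the same forcing $\mathbb{B}h$,
\[
\partial_t^\gamma w + A w \;=\; \bigl[g_{u_1}(\eta_1) - g_{u_2}(\eta_1)\bigr] \;+\; g_{u_2}(w).
\]
Assumption (H4) gives $|g_{u_1}(\eta_1)-g_{u_2}(\eta_1)|_\beta \lesssim |\eta_1|_\alpha\,|u_1-u_2|_\alpha$; combining with (\ref{LC1}) and (\ref{LC4}) this is $\lesssim \|h\|_{Z_{\rho,\infty}}\|z_1-z_2\|_{Z_{\rho,\infty}}$. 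Plugging into the mild formula for $w$ and applying Lemma \ref{A2} yields the $C([0,T];V_\alpha)$-estimate with the desired Lipschitz constant. Differentiating the mild expression for $w$ (the induced boundary term vanishes because $\eta_1(0)=w(0)=0$ and $u_1(0)=u_2(0)=u_0$) and using $|\partial_t \eta_1(\tau)|_\alpha\lesssim \tau^{\theta-1}\|h\|_{Z_{\rho,\infty}}$ from Lemma \ref{lem-lc} together with (\ref{g-map}), one obtains the weighted bound on $t^{1-\theta}|\partial_t w(t)|_\alpha$ exactly as in Theorem \ref{state-map}(ii), completing the estimate in the $Y_{\theta,\alpha}$-norm.

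\emph{Main obstacle.} The delicate point in both parts is the upgrade from the $C([0,T];V_\alpha)$-bound to the full $Y_{\theta,\alpha}$-bound, because the differentiated mild formulas combine two singular weights --- the $\tau^{\theta-1}$ coming from time derivatives of states and adjoints, and the $(t-\tau)^{\gamma(2-\alpha+\beta)/2-1}$ coming from $P_\gamma$ --- so the final profile $t^{\theta-1}$ must be extracted cleanly before Lemma \ref{A2} can close the Gronwall loop. The quadratic (resp.\ bilinear) structure furnished by (H4bis) and (H4) is precisely what permits the correct powers of $\|h\|_{Z_{\rho,\infty}}$ (resp.\ $\|h\|\|z_1-z_2\|$) to emerge while preserving the $t^{\theta-1}$ profile; absent these assumptions one would only get first-order dependence and lose Fréchet differentiability.
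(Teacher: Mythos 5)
Your proposal is correct and follows essentially the same route as the paper: the same residual $v^h=u^h-u-\eta^h$, the same integral-form Taylor expansion controlled by (H4)/(H4bis) to get the quadratic bound $O(\|h\|_{Z_{\rho,\infty}}^2)$ in $C([0,T];V_\alpha)$, and the same differentiation of the mild formula plus the Gronwall Lemma \ref{A2} to upgrade to the weighted $t^{1-\theta}$ derivative bound (the paper spells this out as the terms $Q_1,\dots,Q_5$, which your "repeat the chain of estimates from (\ref{u-diff3})--(\ref{u-diff4})" summarizes). Your sketch of (ii) is likewise consistent with the paper, which itself leaves that part to the reader; the only cosmetic slip is attributing the sup-norm bound on $u^h-u$ to Theorem \ref{state-map}(ii) rather than (i)/(\ref{LC1}).
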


\begin{proof}
We begin with (i). Let $z\in \mathcal{U}$ be arbitrarily chosen and let $u=%
\mathcal{S}\left( z\right) $ be the associated solution to the state system.
Since $\mathcal{U}$ is open in $Z_{\rho ,\infty },$ there is $\zeta >0$ such
that for any $h\in Z_{\rho ,\infty }$ with $\left\Vert h\right\Vert
_{Z_{\rho ,\infty }}\leq \zeta $ there holds $z+h\in \mathcal{U}$. In what
follow, we also consider solutions $u^{h}=\mathcal{S}\left( z+h\right) ,$
for such variations $h\in \mathcal{U}$. Next, we let $v^{h}:=u^{h}-u-\eta
^{h},$ where $\eta =\eta ^{h}$ denotes the unique solution to the linearized
system associated with a given $h\in \mathcal{U}$. Our goal is to show that $%
\left\Vert v^{h}\right\Vert _{Y_{\theta ,\alpha }}=o(\left\Vert h\right\Vert
_{Z_{\rho ,\infty }})$, as $\left\Vert h\right\Vert _{Z_{\rho ,\infty
}}\rightarrow 0.$ To this end, we notice that $v^{h}$ satisfies 
\begin{equation*}
v^{h}\left( t\right) =\int_{0}^{t}P_{\gamma }\left( t-\tau \right) \Big(%
f(u^{h}\left( \tau \right) )-f(u\left( \tau \right) )-f^{^{\prime }}(u\left(
\tau \right) )\eta \left( \tau \right)\Big )d\tau ,
\end{equation*}%
for almost all $t\in \left( 0,T\right) \subset \left( 0,T_{\max }\right) $.
By Taylor's theorem \cite[Theorem 30.1.3]{BB}, we have for almost all $t\in
\left( 0,T\right) ,$%
\begin{equation}
f(u^{h}\left( t\right) )-f\left( u\left( t\right) \right) -f^{^{\prime
}}(u\left( t\right) )\eta \left( t\right) =f^{^{\prime }}(u\left( t\right)
)v^{h}\left( t\right) +r^{h}\left( t\right) ,  \label{est12}
\end{equation}%
with remainder%
\begin{equation*}
r^{h}\left( t\right) :=\int_{0}^{1}(f^{^{\prime }}(u\left( t\right)
+x(u^{h}-u)\left( t\right) )-f^{^{\prime }}\left( u\left( t\right) \right)
)(u^{h}-u)\left( t\right) dx.
\end{equation*}%
Moreover, $v^{h}\left( 0\right) =0,$ $r^{h}\left( 0\right) =0$, and $v^{h}$
is differentiable a.e. on $\left( 0,T\right) ,$ with%
\begin{align}
\partial _{t}v^{h}\left( t\right) & =\int_{0}^{t}P_{\gamma }\left( t-\tau
\right) b_{u}(\partial _{t}u\left( \tau \right) ,v^{h}\left( \tau \right)
)d\tau +\int_{0}^{t}P_{\gamma }\left( t-\tau \right) g_{u}(\partial
_{t}v^{h}\left( \tau \right) )d\tau  \label{est14} \\
& +\int_{0}^{1}\int_{0}^{t}P_{\gamma }\left( t-\tau \right) \left[
g_{u+x(u^{h}-u)}(\partial _{t}(u^{h}-u))-g_{u}(\partial _{t}(u^{h}-u))\right]
d\tau dx  \notag \\
& +\int_{0}^{1}\int_{0}^{t}P_{\gamma }\left( t-\tau \right) \left[
b_{u+x(u^{h}-u)}(\partial _{t}u,u^{h}-u)-b_{u}(\partial _{t}u,u^{h}-u)\right]
d\tau dx  \notag \\
& +\int_{0}^{1}\int_{0}^{t}P_{\gamma }\left( t-\tau \right)
xb_{u+x(u^{h}-u)}(u^{h}-u,\partial _{t}(u^{h}-u))d\tau dx  \notag \\
& =:Q_{1}+...+Q_{5}.  \notag
\end{align}%
In view of (\ref{est12}), we can also rewrite%
\begin{equation*}
v^{h}\left( t\right) =\int_{0}^{t}P_{\gamma }\left( t-\tau \right)
f^{^{\prime }}(u\left( \tau \right) )v^{h}\left( \tau \right) )d\tau
+\int_{0}^{t}P_{\gamma }\left( t-\tau \right) r^{h}\left( \tau \right)
)d\tau .
\end{equation*}%
We deduce%
\begin{align*}
\left\vert v^{h}\left( t\right) \right\vert _{\alpha }& \leq
C_{R}\int_{0}^{t}|P_{\gamma }\left( t-\tau \right) f^{^{\prime }}(u\left(
\tau \right) )v^{h}\left( \tau \right) )|_{\alpha }d\tau \\
& +C_{R}\int_{0}^{1}\int_{0}^{t}\left( t-\tau \right) ^{\frac{\gamma \left(
2-\alpha +\beta \right) }{2}-1}x|u^{h}\left( \tau \right) -u\left( \tau
\right) |_{\alpha }d\tau dx \\
& \lesssim t^{\frac{\gamma \left( 2-\alpha +\beta \right) }{2}}||v^{h}||_{C(%
\left[ 0,T\right] ;V_{\alpha })}+t^{\frac{\gamma \left( 2-\alpha +\beta
\right) }{2}}||u^{h}-u||_{C(\left[ 0,T\right] ;V_{\alpha })}^{2},
\end{align*}%
so that for small enough $T\ll 1,$ we obtain%
\begin{equation}
||v^{h}||_{C(\left[ 0,T\right] ;V_{\alpha })}\lesssim ||u^{h}-u||_{C(\left[
0,T\right] ;V_{\alpha })}^{2}\lesssim \left\Vert h\right\Vert _{Z_{\rho
,\infty }}^{2},\text{ by (\ref{LC3}).}  \label{est13}
\end{equation}%
The continuation argument exploited in the proof of Lemma \ref{extension}
yields the same estimate on the whole interval $\left( 0,T\right) ,$ for any 
$T<T_{\max }$. It remains to estimate all $Q_{i}$-terms in (\ref{est14}).
The assumptions (H4)-(H4bis) and (\ref{est13}) are mainly exploited in these
estimates. We thus find that%
\begin{align}
\left\vert Q_{1}\left( t\right) \right\vert _{\alpha }& \lesssim
\int_{0}^{t}\left( t-\tau \right) ^{\frac{\gamma \left( 2-\alpha +\beta
\right) }{2}-1}\tau ^{\theta -1}d\tau \left( 1+\left\Vert u\right\Vert _{C(%
\left[ 0,T\right] ;V_{\alpha })}\right) \left\Vert u\right\Vert _{Y_{\Theta
,\varepsilon }}||v^{h}||_{C(\left[ 0,T\right] ;V_{\alpha })}  \label{est15} \\
& \lesssim t^{\frac{\gamma \left( 2-\alpha +\beta \right) }{2}+\theta
-1}\left\Vert h\right\Vert _{Z_{\rho ,\infty }}^{2},  \notag
\end{align}%
and%
\begin{equation*}
\left\vert Q_{2}\left( t\right) \right\vert _{\alpha }\lesssim
\int_{0}^{t}\left( t-\tau \right) ^{\frac{\gamma \left( 2-\alpha +\beta
\right) }{2}-1}\left\vert \partial _{t}v^{h}\left( \tau \right) \right\vert
_{\alpha }d\tau .
\end{equation*}%
Analogously, we obtain that%
\begin{align*}
\left\vert Q_{4}\left( t\right) \right\vert _{\alpha }& \lesssim
\int_{0}^{t}\left( t-\tau \right) ^{\frac{\gamma \left( 2-\alpha +\beta
\right) }{2}-1}\left\vert \partial _{t}u\left( \tau \right) \right\vert
_{\alpha }|(u^{h}-u)\left( \tau \right) |_{\alpha }^{2}d\tau \\
& \lesssim t^{\frac{\gamma \left( 2-\alpha +\beta \right) }{2}+\theta
-1}||u^{h}-u||_{C(\left[ 0,T\right] ;V_{\alpha })}^{2}\left\Vert
u\right\Vert _{Y_{\Theta ,\epsilon }} \\
& \lesssim t^{\frac{\gamma \left( 2-\alpha +\beta \right) }{2}+\theta
-1}\left\Vert h\right\Vert _{Z_{\rho ,\infty }}^{2},
\end{align*}%
\begin{align*}
\left\vert Q_{5}\left( t\right) \right\vert _{\alpha }& \lesssim
\int_{0}^{t}\left( t-\tau \right) ^{\frac{\gamma \left( 2-\alpha +\beta
\right) }{2}-1}\left\vert \partial _{t}(u^{h}-u)\left( \tau \right)
\right\vert _{\alpha }\left\vert (u^{h}-u)\left( \tau \right) \right\vert
_{\alpha }d\tau \\
& \lesssim t^{\frac{\gamma \left( 2-\alpha +\beta \right) }{2}+\theta
-1}\left\Vert h\right\Vert _{Z_{\rho ,\infty }}^{2},
\end{align*}%
as well as%
\begin{equation}
\left\vert Q_{3}\left( t\right) \right\vert _{\alpha }\lesssim t^{\frac{%
\gamma \left( 2-\alpha +\beta \right) }{2}+\theta -1}\left\Vert h\right\Vert
_{Z_{\rho ,\infty }}^{2}.  \label{est16}
\end{equation}%
Once again collecting the previous estimates, we obtain by means of 
Gronwall's lemma that%
\begin{equation}
\sup_{t\in \left[ 0,T\right] }t^{1-\theta }|\partial _{t}v^{h}\left(
t\right) |_{\alpha }\lesssim \left\Vert h\right\Vert _{Z_{\rho ,\infty
}}^{2}.  \label{est17}
\end{equation}%
Combining (\ref{est13})-(\ref{est17}), we finally arrive at the conclusion
(i). The proof of (ii) follows in a similar fashion; we leave the details to
the interested reader.
\end{proof}

It is now straightforward to derive the standard variational inequality that
optimal controls must satisfy.{~}However, for the method to be practical, it
is critical to identify the adjoint equation. We shall proceed on these two
fronts simultaneously. Exploiting first the integration by parts formula %
\eqref{bpf-eq} of Proposition \ref{ibpf}, we introduce a dual problem, that
can be associated with (\ref{lin-sys2}) whenever $h\equiv 0,$ also owing to $%
\eta \left( 0\right) =0,$%
\begin{equation}
\begin{cases}
\partial _{t,T}^{\gamma }w+Aw & =f^{^{\prime }}\left( u_{\ast }\right)
w+k,\quad \mbox{in }Q:=(0,T)\times \Omega , \\ 
I_{t,T}^{1-\gamma }w(T,\cdot ) & =0\quad \;\quad \mbox{in }\Omega .%
\end{cases}
\label{adjoint}
\end{equation}%
Roughly speaking, one identifies $k$ with $d_{u}J\left( \mathcal{S}\left(
z_{\ast }\right) ,z_{\ast }\right) $, where the function $u_{\ast }=\mathcal{%
S}\left( z_{\ast }\right) $ is the state associated with a minimizer $%
z_{\ast }\in Z_{ad}\subset Z_{\rho ,\infty },$ of (\ref{RPS}). Note that (%
\ref{adjoint}) is a (linear) backward in time partial differential equation
for the (right) Riemann-Liouville fractional derivative $\partial
_{t,T}^{\gamma }$. %
%
We now use the time transformation $t\mapsto T-t:=\overline{t}$ in (\ref%
{adjoint}) to set $p\left( t\right) =w\left( T-t\right) $ (which also yields
that $w\left( t\right) =w\left( T-\overline{t}\right) =p\left( \overline{t}%
\right) $) and $k\left( t\right) :=g\left( \overline{t}\right) $. Employing
the basic identities%
\begin{equation}
(I_{t,T}^{1-\gamma }w)\left( t\right) =(I_{0,\overline{t}}^{1-\gamma
}p)\left( \overline{t}\right) ,\text{ }-\frac{d}{dt}=\frac{d}{d\overline{t}},
\label{ide-dual}
\end{equation}%
we can then transform (\ref{adjoint}) into a (forward) problem for the left
Riemann-Liouville derivative $D_{T-t}^{\gamma }=D_{\overline{t}}^{\gamma }.$
In particular, solvability of problem (\ref{adjoint}) turns out to be
related to solvability of the following (generic) initial-value problem%
\begin{equation}
\left\{ 
\begin{array}{ll}
D_{\overline{t}}^{\gamma }p+Ap=f^{^{\prime }}\left( u_{\ast }\right) p+g=:l,
& \overline{t}\in \left( 0,T\right) , \\ 
(I_{0,\overline{t}}^{1-\gamma }p)\left( 0\right) =p_{0}. & 
\end{array}%
\right.  \label{adjoint3}
\end{equation}%
Here, once again in the context of (\ref{adjoint}) $g$ must be equal to $%
d_{u}J\left( \mathcal{S}\left( z_{\ast }\right) ,z_{\ast }\right) $ and $%
p_{0}=0$. The solvability of the linearized problem (\ref{adjoint3}) under
suitable conditions on $l$ has also been investigated in detail by Bajlekova 
\cite[Section 4, Theorem 4.16]{Ba01}. However, we prefer to give a more
direct proof of the solvability here due to the additional summand $%
f^{^{\prime }}\left( u_{\ast }\right) p$. To this end, we recall that if $%
(I_{0,\overline{t}}^{1-\gamma }p)\left( 0\right) =p_{0},$ one has the
following integral solution representation for the above linearized problem:%
\begin{equation}
p\left( t\right) =P_{\gamma }\left( t\right) p_{0}+\int_{0}^{t}P_{\gamma
}\left( t-\tau \right) \Big (f^{^{\prime }}\left( u_{\ast }\left( \tau
\right) \right) p\left( \tau \right) +g\left( \tau \right) \Big )d\tau ,
\label{adjoint4}
\end{equation}%
where $u_{\ast }\left( t\right) \in \mathbb{B}_{R}$ ($\mathbb{B}_{R}$ is a
ball of radius $R$, in the corresponding strong topology of $Y_{\theta
,\alpha };$ see (\ref{Y-space})), and we have dropped the bar from $%
\overline{t}$, for the sake of notational simplicity. We also recall from (%
\ref{Y-space}) that the Banach space%
\begin{equation*}
Y_{\rho ,\widetilde{\alpha }}=\left\{ k\in C(\left[ 0,T\right] ;V_{%
\widetilde{\alpha }}):\left\vert \partial _{t}k\left( t\right) \right\vert _{%
\widetilde{\alpha }}\lesssim t^{\rho -1},\text{ \ a.e. }0<t\leq T\right\} ,
\end{equation*}%
is subject to the (natural) norm%
\begin{equation*}
\left\Vert k\right\Vert _{Y_{\rho ,\widetilde{\alpha }}}:=\left\Vert
z\right\Vert _{C([0,T];V_{\widetilde{\alpha }})}+\sup_{t\in \left[ 0,T\right]
}t^{1-\rho }\left\vert \partial _{t}k\left( t\right) \right\vert _{%
\widetilde{\alpha }},
\end{equation*}%
for some $\rho >0$. We sketch a proof of the subsequent result in the
Appendix (see Section \ref{sec:ap}).

\begin{proposition}
\label{reg-lin}Let $u:=u_{\ast }\left( t\right) \in \mathbb{B}_{R}$, be an
optimal solution of (\ref{abs-par}) in the sense of Corollary \ref{main2},
for some $R>0$ and $t\in \left( 0,T\right) $ with $T\leq T_{\max }$ ($%
T_{\max }>0$ is the maximal existence time for $u$; see Theorem \ref{prelim}%
).

\begin{enumerate}
\item[(i)] If (H4) holds and $k\in L^{q}((0,T);V_{\widetilde{\alpha }}),$
for some $q\in (\frac{2}{\gamma \left( 2-\alpha +\widetilde{\alpha }\right) }%
,\infty ],$ then the linearized problem (\ref{adjoint}) admits a unique mild
solution on $\left( 0,T\right) $. In particular, one has 
\begin{equation*}
w\in C\left( \left[ 0,T\right] ;V_{\alpha }\right) ;\text{ }\{\partial
_{t,T}^{\gamma }w,Aw\}\in C([0,T];V_{\widetilde{\alpha }-\delta }),\text{
for }\frac{2}{\gamma q}<\delta \leq 2.
\end{equation*}%
The variational equation%
\begin{equation*}
\langle \partial _{t,T}^{\gamma }w\left( t\right) +Aw\left( t\right)
,v\rangle _{V_{\widetilde{\alpha }-\delta },V_{-\widetilde{\alpha }+\delta
}}=\langle f^{^{\prime }}\left( u\left( t\right) \right) w\left( t\right)
+k\left( t\right) ,v\rangle _{V_{\widetilde{\alpha }},V_{-\widetilde{\alpha }%
}},
\end{equation*}%
is satisfied, for any $v\in V_{-\widetilde{\alpha }+\delta }\subset V_{-%
\widetilde{\alpha }}$, for almost all $t\in (0,T).$

\item[(ii)] If (H4), (H4bis) hold and additionally $k\in Y_{\rho ,\widetilde{%
\alpha }},$ then the mild solution of (\ref{adjoint}) also satisfies%
\footnote{%
We recall once again that $\theta $ is independent of $\rho >0.$} $w\in
Y_{\theta ,\alpha },$ and%
\begin{equation*}
\langle \partial _{t,T}^{\gamma }w\left( t\right) +Aw\left( t\right)
,v\rangle _{V_{\widetilde{\alpha }},V_{-\widetilde{\alpha }}}=\langle
f^{^{\prime }}\left( u\left( t\right) \right) w\left( t\right) +k\left(
t\right) ,v\rangle _{V_{\widetilde{\alpha }},V_{-\widetilde{\alpha }}},
\end{equation*}%
for any $v\in V_{-\widetilde{\alpha }}$, for almost all $t\in (0,T).$
Furthermore, $w\in L^{1+\xi }\left( (0,T);V_{2+\widetilde{\alpha }}\right) ,$
$\partial _{t,T}^{\gamma }w\in L^{1+\xi }\left( (0,T);V_{\widetilde{\alpha }%
}\right) ,$ for some $\xi =\xi \left( \theta \right) >0.$
\end{enumerate}
\end{proposition}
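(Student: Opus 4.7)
The plan is to reduce the backward problem (\ref{adjoint}) to the forward-in-time Cauchy problem (\ref{adjoint3}) via the substitutions (\ref{ide-dual}), with $p_{0}=0$ in the application to the optimal control, and then to adapt the strategy developed for the state equation in Section \ref{ss:sem}. Since (\ref{adjoint3}) is \emph{linear} in $p$ with a time-dependent coefficient $f^{\prime}(u_{\ast}(t))$ whose action is controlled uniformly by $R$ through (H4), the construction of mild solutions is simpler than in Lemma \ref{T1-integral-H1}, and no blow-up can occur in the linear problem.

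For part (i), I would set up a contraction on $C([0,T_{\ast}];V_{\alpha})$ for small $T_{\ast}>0$ via the fixed-point operator associated with (\ref{adjoint4}). Hypothesis (H4) yields $|f^{\prime}(u_{\ast}(\tau))p(\tau)|_{\widetilde{\alpha}}\lesssim (1+R)|p(\tau)|_{\alpha}$, and the smoothing estimate (\ref{s-2}) bounds the relevant kernel by $(t-\tau)^{\gamma(2-\alpha+\widetilde{\alpha})/2-1}$ from $V_{\widetilde{\alpha}}$ into $V_{\alpha}$. Combining this with H\"older's inequality in $\tau$ against the $L^{q}$-integrable data $k$, and closing via Lemma \ref{A2}, produces a local mild solution, which extends to the entire interval $[0,T]$ by linearity. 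To obtain the additional regularity $\{Aw,\partial_{t,T}^{\gamma}w\}\in C([0,T];V_{\widetilde{\alpha}-\delta})$, I would apply $A$ to (\ref{adjoint4}) and exploit the sharper bound $|AP_{\gamma}(t-\tau)v|_{\widetilde{\alpha}-\delta}\lesssim (t-\tau)^{\gamma\delta/2-1}|v|_{\widetilde{\alpha}}$; the constraint $\delta>2/(\gamma q)$ appears precisely as the threshold for finiteness of the resulting singular integral in $\tau$, in complete parallel to the derivation of (\ref{time-reg1}) in Theorem \ref{main1}. The statement for $\partial_{t,T}^{\gamma}w$ is then read off the equation itself.

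For part (ii), I would differentiate (\ref{adjoint4}) in time, using the identity $\frac{d}{dt}(P_{\gamma}\ast L)(t)=P_{\gamma}(t)L(0)+(P_{\gamma}\ast\partial_{t}L)(t)$ already invoked in Lemma \ref{lem-lc}, together with the expansion $\partial_{t}(f^{\prime}(u_{\ast})p)=f^{\prime\prime}(u_{\ast})(\partial_{t}u_{\ast})p+f^{\prime}(u_{\ast})\partial_{t}p$. Under (H4bis), the first summand is controlled by the bilinear form $b_{u_{\ast}}(\partial_{t}u_{\ast},p)$, while (H4) handles the second. This yields an integral equation for $\partial_{t}p$ structurally identical to (\ref{est14}); estimating each summand exactly as in the proofs of Corollary \ref{main2} and Lemma \ref{lem-lc2}, and using the bound $|\partial_{t}k(t)|_{\widetilde{\alpha}}\lesssim t^{\rho-1}$ built into the norm of $Y_{\rho,\widetilde{\alpha}}$, a final application of Lemma \ref{A2} closes to $|\partial_{t}p(t)|_{\alpha}\lesssim t^{\theta-1}$ with $\theta=\gamma(2-\alpha+\widetilde{\alpha})/2$, giving $p\in Y_{\theta,\alpha}$. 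The remaining higher regularity $p\in L^{1+\xi}((0,T);V_{2+\widetilde{\alpha}})$ and $D_{t}^{\gamma}p\in L^{1+\xi}((0,T);V_{\widetilde{\alpha}})$ then follows by comparison in the pointwise equation, as at the end of the proof of Corollary \ref{main2}.

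The main obstacle will be the rigorous justification of the time differentiation of the convolution $(P_{\gamma}\ast f^{\prime}(u_{\ast})p)(t)$ near $t=0$, where the Mittag-Leffler-type kernel $P_{\gamma}$ and the derivative $\partial_{t}u_{\ast}(\tau)\sim\tau^{\theta-1}$ are both singular. The critical threshold $\theta=\gamma(2-\alpha+\widetilde{\alpha})/2$ is precisely what keeps the competing Beta-function-type integrals convergent, and it is this bookkeeping of exponents that forces $\theta$ to be independent of $\rho$, as flagged in the footnote preceding the statement. Everything else in the argument is essentially a linearized replay of the forward theory.
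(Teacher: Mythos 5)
Your part (i) follows the paper's proof almost verbatim: pass to the forward problem \eqref{adjoint3} via \eqref{ide-dual}, run a contraction for the operator $\Upsilon(p)=P_{\gamma}\ast\bigl(f^{\prime}(u_{\ast})p\bigr)+P_{\gamma}\ast g$ on a ball in $C([0,T_{\ast}];V_{\alpha})$ using \eqref{s-2} and (H4), extend by linearity, and recover $\{Aw,\partial_{t,T}^{\gamma}w\}\in C([0,T];V_{\widetilde{\alpha}-\delta})$ from the bound $\lvert A(P_{\gamma}\ast\cdot)(t)\rvert_{\widetilde{\alpha}-\delta}\lesssim t^{\gamma\delta/2-1/q}$, which is exactly where $\delta>2/(\gamma q)$ enters. (Minor point: Lemma \ref{A2} is not needed for the local contraction itself; smallness of $T_{\ast}$ suffices.)

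Part (ii) has a genuine gap. You propose to \emph{differentiate} \eqref{adjoint4} in time, expanding $\partial_{t}\bigl(f^{\prime}(u_{\ast})p\bigr)=b_{u_{\ast}}(\partial_{t}u_{\ast},p)+g_{u_{\ast}}(\partial_{t}p)$ and applying the Leibniz identity for $P_{\gamma}\ast L$. But the conclusion of part (ii) is precisely that $p$ is differentiable with $\lvert\partial_{t}p(t)\rvert_{\alpha}\lesssim t^{\theta-1}$; writing an integral equation for $\partial_{t}p$ presupposes what is to be proved, and the Leibniz identity requires $L=f^{\prime}(u_{\ast})p$ to be of bounded variation, which again needs the very regularity in question. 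You correctly flag this as ``the main obstacle'' but offer no mechanism to overcome it. The paper's proof supplies that mechanism: it never differentiates, but instead estimates the difference quotients $V(t,h)=h^{-1}\bigl(p(t+h)-p(t)\bigr)$ uniformly in $h$, decomposing into four terms ($V_{1},\dots,V_{4}$), closing with the Gronwall Lemma \ref{A2} to get $\lvert V(t,h)\rvert_{\alpha}\leq Ct^{\theta-1}$ with $C$ independent of $h$, and then invoking the Denjoy--Young--Saks theorem (exactly as in the proof of Theorem \ref{reg-thm}) to conclude that all four Dini derivatives coincide a.e.\ and hence that $p$ is differentiable a.e.\ with the asserted weighted bound. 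Your exponent bookkeeping for the individual terms is consistent with the paper's $V_{1},\dots,V_{4}$ estimates, so the fix is to replace the formal differentiation by this difference-quotient argument (or an equivalent approximation scheme); as written, the proposal's part (ii) is circular. The final comparison step yielding $w\in L^{1+\xi}((0,T);V_{2+\widetilde{\alpha}})$ and $\partial_{t,T}^{\gamma}w\in L^{1+\xi}((0,T);V_{\widetilde{\alpha}})$ is fine once the weighted derivative bound is in hand.
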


It follows from the chain rule that the reduced cost functional $\mathcal{J}%
\left( z\right) =J\left( u,z\right) =J\left( \mathcal{S}\left( z\right)
,z\right) $ is~Fr\'echet differentiable at every~$z\in \mathcal{U}$
(provided that $u\mapsto J_{1}\left( u\right) $ and $z\mapsto J_{2}\left(
z\right) $ are continuously (Fr\'echet) differentiable) with Fr\'echet
derivative%
\begin{equation}
d\mathcal{J}\left( z\right) =d_{u}J\left( \mathcal{S}\left( z\right)
,z\right) \circ d\mathcal{S}\left( z\right) +d_{z}J\left( \mathcal{S}\left(
z\right) ,z\right) .  \label{eq:dJ}
\end{equation}%
Equivalently for any $h\in \mathcal{U}$, we have%
\begin{align}
& \int_{0}^{T}\left( d\mathcal{J}\left( z\left( t\right) \right) ,h\left(
t\right) \right) _{L^{2}\left( D\right) }dt  \label{eq:dJ1} \\
& =\int_{0}^{T}\Big[\langle d_{u}J\left( \mathcal{S}\left( z\left( t\right)
\right) ,z\left( t\right) \right) ,d\mathcal{S}\left( z\left( t\right)
\right) h\left( t\right) \rangle _{V_{\widetilde{\alpha }},V_{-\widetilde{%
\alpha }}}+\left( d_{z}J\left( \mathcal{S}\left( z\left( t\right) \right)
,z\left( t\right) \right) ,h\left( t\right) \right) _{L^{2}\left( D\right) }%
\Big]dt,  \notag
\end{align}%
where we have used the differentiability of $\mathcal{S}$ from Lemma~\ref%
{lem-lc2}(i) and the fact that $d\mathcal{S}\left( z\right) \in \mathcal{L}%
(Z_{\rho ,\infty },Y_{\theta ,\widetilde{\alpha }})$ because $Y_{\theta
,\alpha }\subseteq Y_{\theta ,\widetilde{\alpha }}$, for each $a\geq 
\widetilde{\alpha }\geq -1$. Since $d\mathcal{S}\left( z\right) $ is bounded
and linear, it follows that its adjoint $d\mathcal{S}\left( z\right) ^{\ast }\in 
\mathcal{L}(Z_{\rho ,\infty },Y_{\theta ,\widetilde{\alpha }})^{\ast }$ is
well-defined. Consequently, from \eqref{eq:dJ1} we obtain%
\begin{align}
& \int_{0}^{T}\left( d\mathcal{J}\left( z\left( t\right) \right) ,h\left(
t\right) \right) _{L^{2}\left( D\right) }dt  \label{eq:dJ2} \\
& =\int_{0}^{T}\left( d\mathcal{S}\left( z\left( t\right) \right) ^{\ast
}d_{u}J\left( \mathcal{S}\left( z\left( t\right) \right) ,z\left( t\right)
\right) +d_{z}J\left( \mathcal{S}\left( z\left( t\right) \right) ,z\left(
t\right) \right) ,h\left( t\right) \right) _{L^{2}\left( D\right) }dt. 
\notag
\end{align}%
Thus to evaluate $d\mathcal{J}$, we need to identify $d\mathcal{S}\left(
z\left( \cdot \right) \right) ^{\ast }$, we do this next.

\begin{lemma}
\label{lem:dSstar} Let $(z,u)\in Z_{\rho ,\infty }\times Y_{\theta ,\alpha }$
solve the state equation in the sense of Theorem \ref{main1}. For a.e. $t\in %
\left[ 0,T\right] $, the adjoint operator $d\mathcal{S}(z\left( t\right)
)^{\ast }\psi \left( t\right) :V_{-\widetilde{\alpha }}\rightarrow
L^{2}\left( D\right) $ is given by 
\begin{equation*}
d\mathcal{S}(z\left( t\right) )^{\ast }\psi =\mathbb{B}^{\ast }w\left(
t\right) \in L^{2}\left( D\right) .
\end{equation*}%
Furthermore, $w$ solves the linear equation 
\begin{equation}
\begin{cases}
\partial _{t,T}^{\gamma }w+Aw & =f^{^{\prime }}\left( u_{\ast }\right)
w+\psi ,\text{ }t\in \left( 0,T\right) , \\ 
I_{t,T}^{1-\gamma }w(T,\cdot ) & =0.%
\end{cases}
\label{adjoint0}
\end{equation}
\end{lemma}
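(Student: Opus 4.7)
The strategy is the classical duality computation, with care devoted to justifying the fractional integration by parts. Fix $h\in Z_{\rho,\infty}$ and $\psi$ in the appropriate dual space ($\psi\in Y_{\rho,\widetilde{\alpha}}$ or $L^{q}((0,T);V_{\widetilde{\alpha}})$, as needed to invoke Proposition~\ref{reg-lin}). By Lemma~\ref{lem-lc2}(i), $\eta:=d\mathcal{S}(z)h$ solves the linearized problem (\ref{lin-sys2}) with $\eta(0)=0$, while by Proposition~\ref{reg-lin}, $w$ solves (\ref{adjoint0}) with $I_{t,T}^{1-\gamma}w(T,\cdot)=0$. My goal is to establish the integrated duality
\begin{equation*}
\int_{0}^{T}\langle \eta(t),\psi(t)\rangle_{V_{\alpha},V_{-\alpha}}\,dt
=\int_{0}^{T}\bigl(h(t),\mathbb{B}^{\ast}w(t)\bigr)_{L^{2}(D)}\,dt,
\end{equation*}
from which the pointwise identification $d\mathcal{S}(z(t))^{\ast}\psi(t)=\mathbb{B}^{\ast}w(t)$ follows by the arbitrariness of $h$ in a dense subclass.

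\textbf{Key steps.} First, substitute $\psi=\partial_{t,T}^{\gamma}w+Aw-f'(u_{\ast})w$ from the adjoint equation into the left-hand side. Second, apply the integration by parts formula (\ref{bpf-eq}) of Proposition~\ref{ibpf} with $u\leftrightarrow\eta$ and $v\leftrightarrow w$ to transfer $\partial_{t,T}^{\gamma}$ to a strong Caputo derivative acting on $\eta$:
\begin{equation*}
\int_{0}^{T}\langle\partial_{t,T}^{\gamma}w,\eta\rangle\,dt
=\int_{0}^{T}\langle w,{}_{C}\partial_{t}^{\gamma}\eta\rangle\,dt
-\Bigl[\langle (I_{t,T}^{1-\gamma}w)(t),\eta(t)\rangle\Bigr]_{t=0}^{t=T}.
\end{equation*}
The boundary bracket vanishes because $\eta(0)=0$ and $I_{t,T}^{1-\gamma}w(T,\cdot)=0$. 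Third, exploit the self-adjointness of $A$ (which is the reason (HA) is imposed) to move $A$ from $w$ onto $\eta$ in the $L^{2}(\Omega)$-duality pairing, and note that the symmetric term $\langle f'(u_{\ast})w,\eta\rangle=\langle w,f'(u_{\ast})\eta\rangle$. Combining these three transfers reduces the left-hand side to
\begin{equation*}
\int_{0}^{T}\bigl\langle w(t),\,{}_{C}\partial_{t}^{\gamma}\eta(t)+A\eta(t)-f'(u_{\ast}(t))\eta(t)\bigr\rangle\,dt
=\int_{0}^{T}\langle w(t),\mathbb{B}h(t)\rangle\,dt,
\end{equation*}
where the linearized equation (\ref{lin-sys2}) was used in the last step. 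Finally, the definition of $\mathbb{B}^{\ast}$ yields $\langle w,\mathbb{B}h\rangle=(h,\mathbb{B}^{\ast}w)_{L^{2}(D)}$, which gives the desired identity.

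\textbf{Expected obstacle.} The delicate point is justifying the integration by parts rigorously: Proposition~\ref{ibpf} requires $\eta\in W^{1,p}((0,T);Y^{\ast})$ and $w\in W^{1,q}((0,T);Y)$ with $p^{-1}+q^{-1}\leq 2-\gamma$, together with matching duality between the spaces in which each factor lives. The regularity supplied by Lemma~\ref{lem-lc} for $\eta$ and by Proposition~\ref{reg-lin}(ii) for $w$ — namely $\eta,w\in L^{1+\xi}((0,T);V_{2+\widetilde{\alpha}})$ with strong Caputo derivatives in $L^{1+\xi}((0,T);V_{\widetilde{\alpha}})$, and the pointwise bounds $|\partial_{t}\eta(t)|_{\alpha},|\partial_{t}w(t)|_{\alpha}\lesssim t^{\theta-1}$ — is exactly what is needed: pairing $\eta$ against $V_{-\widetilde{\alpha}}$-valued $w$ (and vice versa) allows all integrals in (\ref{bpf-eq}) to be interpreted in the duality $\langle\cdot,\cdot\rangle_{V_{\widetilde{\alpha}},V_{-\widetilde{\alpha}}}$, with the integrability exponents comfortably satisfying the constraint since $\gamma<1$ and $\xi$ can be chosen arbitrarily small. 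Once this is in place, the equivalence ${}_{C}\partial_{t}^{\gamma}\eta=\partial_{t}^{\gamma}\eta$ provided by Proposition~\ref{P1} closes the loop, permitting the substitution of the linearized PDE in the last step. The pointwise (a.e.) identification $d\mathcal{S}(z(t))^{\ast}\psi(t)=\mathbb{B}^{\ast}w(t)$ then follows by choosing $h$ of the form $h(t)=\chi_{E}(t)\varphi$ for arbitrary measurable $E\subset[0,T]$ and $\varphi\in L^{2}(D)$, and noting that this class is dense in an appropriate subspace for which both sides are continuous in $t$.
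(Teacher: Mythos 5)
Your proposal is correct and follows essentially the same route as the paper: test the adjoint equation with $\eta=d\mathcal{S}(z)h$, transfer $\partial_{t,T}^{\gamma}$ onto $\eta$ via the integration-by-parts formula of Proposition~\ref{ibpf} (the bracket vanishing since $\eta(0)=0$ and $I_{t,T}^{1-\gamma}w(T)=0$), use the self-adjointness of $A$ and the symmetry of multiplication by $f'(u_{\ast})$, substitute the linearized equation, and read off $\mathbb{B}^{\ast}w$ from the duality $\langle w,\mathbb{B}h\rangle=(h,\mathbb{B}^{\ast}w)_{L^{2}(D)}$. Your verification of the hypotheses of Proposition~\ref{ibpf} via the regularity from Lemma~\ref{lem-lc} and Proposition~\ref{reg-lin}(ii) is in fact more explicit than the paper's; the only caveat is that test functions of the form $\chi_{E}(t)\varphi$ do not belong to $Z_{\rho,\infty}$, so the final a.e.\ identification should instead be phrased through smooth-in-time approximations within $Z_{\rho,\infty}$.
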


\begin{proof}
Recall that for each $\widetilde{\alpha }\in \left[ -1,0\right] $, $\eta =d%
\mathcal{S}(z)h$ solves \eqref{lin-sys2}, while $\eta \in Y_{\theta ,\alpha
}\subset Y_{\theta ,\widetilde{\alpha }}$ and%
\begin{equation*}
\eta \in L^{\sigma }\left( (0,T);V_{2+\widetilde{\alpha }}\right) \subseteq
L^{\sigma }\left(( 0,T);V_{-\widetilde{\alpha }}\right) \text{ and }\partial
_{t}^{\gamma }\eta \in L^{1+\xi }\left( (0,T);V_{\widetilde{\alpha }}\right)
\end{equation*}%
(the values $\sigma $ and $\xi $ are given in Corollary \ref{main2}). For
every $\psi \in Y_{\rho ,\widetilde{\alpha }}$ and $h\in Z_{\rho ,\infty }$,
we have that 
\begin{equation}
\int_{0}^{T}\langle \psi \left( t\right) ,d\mathcal{S}(z\left( t\right)
)h\left( t\right) \rangle _{V_{\widetilde{\alpha }},V_{-\widetilde{\alpha }%
}}dt=\int_{0}^{T}\left( d\mathcal{S}(z)^{\ast }\psi \left( t\right) ,h\left(
t\right) \right) _{L^{2}\left( D\right) }dt.  \label{eq:001}
\end{equation}%
Testing \eqref{adjoint0} with $\eta $ solving \eqref{lin-sys2}, in view of
Proposition \ref{reg-lin}(ii) (recall that $w$ is sufficiently smooth), we
obtain%
\begin{align*}
& \int_{0}^{T}\langle \psi \left( t\right) ,d\mathcal{S}(z\left( t\right)
)h\left( t\right) \rangle _{V_{\widetilde{\alpha }},V_{-\widetilde{\alpha }%
}}dt \\
& =\int_{0}^{T}\langle \psi \left( t\right) ,\eta \left( t\right) \rangle
_{V_{\widetilde{\alpha }},V_{-\widetilde{\alpha }}}dt \\
& =\int_{0}^{T}\Big[\langle \partial _{t,T}^{\gamma }w\left( t\right) ,\eta
\left( t\right) \rangle _{V_{\widetilde{\alpha }},V_{-\widetilde{\alpha }%
}}+\langle Aw\left( t\right) -f^{\prime }(u\left( t\right) )w\left( t\right)
,\eta \left( t\right) \rangle _{V_{\widetilde{\alpha }},V_{-\widetilde{%
\alpha }}}\Big]dt.
\end{align*}%
Applying integration by parts in time (see Proposition \ref{ibpf}) and using
the fact that $A$ can be extended to a (self-adjoint) isomorphism acting
from $V_{\widetilde{\alpha }}$ into $V_{-\widetilde{\alpha }}$, we arrive at 
\begin{equation*}
\int_{0}^{T}\langle \psi \left( t\right) ,d\mathcal{S}(z\left( t\right)
)h\left( t\right) \rangle _{V_{\widetilde{\alpha }},V_{-\widetilde{\alpha }%
}}dt=\int_{0}^{T}\left\langle w\left( t\right) ,\partial _{t}^{\gamma }\eta
(t)+A\eta (t)-f^{\prime }(u(t))\eta (t)\right\rangle _{V_{-\widetilde{\alpha 
}},V_{\widetilde{\alpha }}}dt.
\end{equation*}%
Then using \eqref{lin-sys2}, we immediately obtain 
\begin{equation}
\int_{0}^{T}\langle \psi \left( t\right) ,d\mathcal{S}(z\left( t\right)
)h\left( t\right) \rangle _{V_{\widetilde{\alpha }},V_{-\widetilde{\alpha }%
}}dt=\int_{0}^{T}\left\langle w\left( t\right) ,\mathbb{B}h\left( t\right)
\right\rangle _{V_{-\widetilde{\alpha }},V_{\widetilde{\alpha }}}dt.
\label{eq:002}
\end{equation}%
The asserted result then follows from \eqref{eq:001} and \eqref{eq:002}.
\end{proof}

Finally, we are ready to state the first order necessary optimality
conditions.

\begin{theorem}
\label{fin-thm-necessary-opt}Let $z_{\ast }\in Z_{ad}$ be a local minimum
for \eqref{RPS} with $u_{\ast }=\mathcal{S}(z_{\ast })$ solving the state
equation. If $w$ solves the adjoint equation \eqref{adjoint0} with $\psi $
replaced by $d_{u}J\left( \mathcal{S}\left( z\right) ,z\right) $, then the
following necessary optimality conditions hold: 
\begin{align}
& \int_{0}^{T}\left( d\mathcal{J}\left( z_{\ast }\left( t\right) \right)
,z\left( t\right) -z_{\ast }\left( t\right) \right) _{L^{2}\left( D\right)
}dt  \notag \\
& =\int_{0}^{T}\left( \mathbb{B}^{\ast }w\left( t\right) +d_{z}J\left( 
\mathcal{S}\left( z_{\ast }\left( t\right) \right) ,z_{\ast }\left( t\right)
\right) ,z\left( t\right) -z_{\ast }\left( t\right) \right) _{L^{2}\left(
D\right) }dt \geq 0\label{vari}
\end{align}
for all   $z\in Z_{ad}$.  
\end{theorem}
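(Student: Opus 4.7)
The plan is to derive \eqref{vari} as a standard first-order necessary condition for a local minimizer of a Fr\'echet differentiable functional over a convex admissible set, combined with the adjoint identification already accomplished in Lemma \ref{lem:dSstar}. More precisely, I would proceed as follows.

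First, since $z_{\ast }\in Z_{ad}$ is a local minimum of $\mathcal{J}$ and $Z_{ad}$ is a convex subset of $Z_{\rho ,\infty }$, for any admissible direction $z\in Z_{ad}$ the convex combination $z_{\ast }+\tau (z-z_{\ast })$ belongs to $Z_{ad}$ for every $\tau \in (0,1]$. Moreover, for $\tau $ sufficiently small this combination lies inside the open neighbourhood $\mathcal{U}$ on which $\mathcal{S}$ is differentiable (shrinking $\mathcal{U}$ if necessary so that $z_{\ast }\in \mathcal{U}$). The local-minimality of $z_{\ast }$ together with the Fr\'echet differentiability of $\mathcal{J}$ at $z_{\ast }$ then yields, in the standard way, the Euler inequality
\begin{equation*}
\int_{0}^{T}\left( d\mathcal{J}(z_{\ast }(t)),z(t)-z_{\ast }(t)\right)_{L^{2}(D)}dt\;\geq \;0,\qquad \forall \,z\in Z_{ad}.
\end{equation*}
The differentiability of $\mathcal{J}=J_{1}\circ \mathcal{S}+J_{2}$ follows from Lemma \ref{lem-lc2}(i) combined with the standing continuous differentiability assumptions on $J_{1},J_{2}$, and the chain-rule identity \eqref{eq:dJ} is valid on $\mathcal{U}$.

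Next, I would unfold $d\mathcal{J}(z_{\ast })$ using the identity \eqref{eq:dJ2}, namely
\begin{equation*}
d\mathcal{J}(z_{\ast })\;=\;d\mathcal{S}(z_{\ast })^{\ast }\,d_{u}J(\mathcal{S}(z_{\ast }),z_{\ast })\;+\;d_{z}J(\mathcal{S}(z_{\ast }),z_{\ast }).
\end{equation*}
Here I would invoke Lemma \ref{lem:dSstar}, specialised to $\psi =d_{u}J(\mathcal{S}(z_{\ast }),z_{\ast })$. Applicability of that lemma requires verifying that $\psi $ has the regularity $Y_{\rho ,\widetilde{\alpha }}$ needed in Proposition \ref{reg-lin}(ii) so that the adjoint state $w$ belongs to $Y_{\theta ,\alpha }$; this is where I would rely on the hypothesis that $J_{1}$ is continuously differentiable from $D_{1}=Y_{\rho ,\widetilde{\alpha }}$ into its dual, providing a $\psi $ of the admissible class, and on the regularity of $u_{\ast }$ established in Corollary \ref{main2}. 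Granting this, Lemma \ref{lem:dSstar} identifies
\begin{equation*}
d\mathcal{S}(z_{\ast })^{\ast }\,d_{u}J(\mathcal{S}(z_{\ast }),z_{\ast })\;=\;\mathbb{B}^{\ast }w,
\end{equation*}
with $w$ solving the backward problem \eqref{adjoint0}.

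Substituting this identification into the Euler inequality above yields exactly \eqref{vari}. The only genuinely delicate point in the argument is the rigorous passage of the Caputo derivative from the forward variable $\eta $ to the adjoint variable $w$, but this has already been carried out in the proof of Lemma \ref{lem:dSstar} via the integration-by-parts formula \eqref{bpf-eq} of Proposition \ref{ibpf}, together with the vanishing boundary terms guaranteed by $\eta (0)=0$ and $I_{t,T}^{1-\gamma }w(T,\cdot )=0$. Consequently, the main obstacle is not in the variational-inequality step itself but in ensuring that all the function-space assumptions (differentiability of $J_{1}$ into the correct dual, regularity of $w$ afforded by Proposition \ref{reg-lin}(ii), differentiability of $\mathcal{S}$ from Lemma \ref{lem-lc2}(i)) are simultaneously compatible so that the chain-rule/adjoint computation is legitimate; the remainder of the proof is then a one-line substitution.
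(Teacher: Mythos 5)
Your proposal is correct and follows essentially the same route as the paper: the convexity of $Z_{ad}$ together with the Fr\'echet differentiability of $\mathcal{J}$ gives the Euler variational inequality, and Lemma \ref{lem:dSstar} applied to \eqref{eq:dJ2} identifies $d\mathcal{S}(z_{\ast })^{\ast }d_{u}J(\mathcal{S}(z_{\ast }),z_{\ast })$ with $\mathbb{B}^{\ast }w$. The paper's own proof is just a terser version of exactly this argument; your additional remarks on verifying the regularity of $\psi$ and the compatibility of the function-space hypotheses are sensible but not part of the paper's (very short) proof.
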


\begin{proof}
The proof immediately follows from the convexity of $Z_{ad}$ and the assumed
differentiability of $\mathcal{J}$. Towards this end, using Lemma~\ref%
{lem:dSstar}, we can write \eqref{eq:dJ2} equivalently as the right-hand
side of (\ref{vari}), where $w$ solves the adjoint equation \eqref{adjoint0}
with $\psi $ replaced by $d_{u}J\left( \mathcal{S}\left( z\right) ,z\right)$.
\end{proof}

\section{A global regularity result for energy solutions}

\label{ss:global}

Our goal in this section is to describe the proper regularity conditions
necessary to obtain globally defined bounded solutions, i.e., $T_{\max
}=\infty $ (see Corollary \ref{main2}). We begin with a simple energy
estimate which is a consequence of the Hardy-Littlewood theorem. To this
end, let $H$ be a Hilbert space with its associated inner product $\left(
\cdot ,\cdot \right) $ and norm $\left\vert \cdot \right\vert _{H}$,
respectively.

\begin{proposition}
\label{frac-co} If $u\in W^{1,p}\left( (0,T);H\right) $ with $p\geq \frac{2}{%
2-\gamma },$ then the following holds:%
\begin{equation*}
\int_{0}^{T}\left( \partial _{t}^{\gamma }u\left( t\right) ,\partial
_{t}u\left( t\right) \right) dt\geq \sin \frac{\gamma \pi }{2}%
\sum_{n=1}^{\infty }\left\Vert g_{\left( 1-\gamma \right) /2}\ast \partial
_{t}u_{n}\right\Vert _{L^{2}\left( 0,T\right) }^{2}\left\Vert \psi
_{n}\right\Vert _{H}^{2}\geq 0,
\end{equation*}%
for any $\gamma \in \left( 0,1\right) $ and $T>0.$ Here, $u_{n}\left(
t\right) :=\left( u\left( t\right) ,\psi _{n}\right)_H $ where $(\psi
_{n})_{n\in {\mathbb{N}}}$ is an orthogonal basis for $H.$
\end{proposition}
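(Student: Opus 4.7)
The plan is to decompose along the orthogonal basis, reduce the claim to a scalar inequality, and prove the latter by a Plancherel computation that exploits the semigroup property of the Riemann--Liouville kernels. Assume without loss of generality that $(\psi_n)$ is orthonormal (so that $\|\psi_n\|_H^2=1$); the general orthogonal case is recovered by rescaling. For $u\in W^{1,p}((0,T);H)$, every coefficient $u_n(t):=(u(t),\psi_n)_H$ lies in $W^{1,p}(0,T)$ with $\partial_t u_n=(\partial_t u,\psi_n)_H$, and by Proposition~\ref{P1} one has $\partial_t^\gamma u = g_{1-\gamma}\ast \partial_t u$. Since convolution in $t$ commutes with projection onto $\psi_n$, we get $(\partial_t^\gamma u(t),\partial_t u(t))_H = \sum_{n\in\N} (g_{1-\gamma}\ast \partial_t u_n)(t)\,\partial_t u_n(t)$. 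Integrating in time and interchanging sum and integral (to be justified below by the nonnegativity of each summand), the proposition reduces to the scalar statement
\begin{equation*}
\int_0^T (g_{1-\gamma}\ast v)(t)\,v(t)\,dt \geq \sin\frac{\gamma\pi}{2}\,\|g_{(1-\gamma)/2}\ast v\|_{L^2(0,T)}^2,\qquad v\in L^p(0,T),\; p\geq\tfrac{2}{2-\gamma}.
\end{equation*}

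For the scalar inequality I would extend $v$ by zero to $\RR$. The Beta-function identity gives the semigroup relation $g_{(1-\gamma)/2}\ast g_{(1-\gamma)/2}=g_{1-\gamma}$, and the one-sided Fourier symbol of $g_\alpha$ is $(i\xi)^{-\alpha}$, so $\widehat{g_{1-\gamma}\ast v}(\xi) = (i\xi)^{-(1-\gamma)}\hat v(\xi)$. Since $v$ is real-valued, Plancherel yields
\begin{equation*}
\int_\RR (g_{1-\gamma}\ast v)(t)\,v(t)\,dt = \mathrm{Re}\!\int_\RR (i\xi)^{-(1-\gamma)}|\hat v(\xi)|^2\,d\xi = \sin\frac{\gamma\pi}{2}\int_\RR |\xi|^{-(1-\gamma)}|\hat v(\xi)|^2\,d\xi,
\end{equation*}
where the last equality uses $\mathrm{Re}\,(i\xi)^{-(1-\gamma)}=\cos(\pi(1-\gamma)/2)|\xi|^{-(1-\gamma)}=\sin(\gamma\pi/2)|\xi|^{-(1-\gamma)}$ on both half-lines. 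A second application of Plancherel identifies the right-hand side with $\sin(\gamma\pi/2)\,\|g_{(1-\gamma)/2}\ast v\|_{L^2(\RR)}^2$, which is at least the corresponding norm over $(0,T)$. The left-hand integral is unchanged when $(0,T)$ is replaced by $\RR$, since $v$ vanishes outside $[0,T]$ and $g_{1-\gamma}$ is supported on $[0,\infty)$.

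The delicate point, and the main obstacle, is to make the Plancherel identities rigorous when $v$ is only in $L^p$ with $p=2/(2-\gamma)<2$. The 1D Hardy--Littlewood--Sobolev theorem at the endpoint $1/p - 1/2 = (1-\gamma)/2$ shows $g_{(1-\gamma)/2}\ast v\in L^2(\RR)$; this is precisely the content of the hypothesis $p\geq 2/(2-\gamma)$. Taking Fourier transforms gives $|\xi|^{-(1-\gamma)/2}\hat v\in L^2(\RR)$, hence $|\xi|^{-(1-\gamma)}|\hat v|^2\in L^1(\RR)$, so that the right-hand integral above is finite and the Parseval-type pairing on the left is meaningful (the product $v\cdot(g_{1-\gamma}\ast v)$ lies in $L^1(0,T)$ by H\"older, since $g_{1-\gamma}\ast v\in L^{2/\gamma}(\RR)$ by HLS). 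I would first verify the scalar identity for $v\in C_c^\infty(0,T)$, where every manipulation is straightforward, and then pass to the limit using the HLS bound together with the density of $C_c^\infty(0,T)$ in $L^p(0,T)$. The nonnegativity of each scalar summand obtained this way then legitimizes the exchange of summation and integration in the first step via Tonelli's theorem, completing the argument.
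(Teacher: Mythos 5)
Your reduction to the scalar inequality is exactly the paper's first step, but from there the two arguments diverge: the paper simply invokes \cite[Corollary 2.1]{TYZ} for the coercivity estimate $\int_0^T (g_{1-\gamma}\ast v)\,v\,dt\geq \sin\tfrac{\gamma\pi}{2}\|g_{(1-\gamma)/2}\ast v\|_{L^2(0,T)}^2$, whereas you prove it from scratch by extending $v$ by zero, using the semigroup identity $g_{(1-\gamma)/2}\ast g_{(1-\gamma)/2}=g_{1-\gamma}$, and computing $\mathrm{Re}\,(i\xi)^{-(1-\gamma)}=\sin(\gamma\pi/2)|\xi|^{-(1-\gamma)}$ via Plancherel. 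Your computation is correct and is essentially the classical Fourier proof of positive definiteness of the Riemann--Liouville kernel; it has the virtue of being self-contained, of showing that the inequality is in fact an \emph{equality} on $L^2(\mathbb{R})$ (the only loss being the truncation of the half-line to $(0,T)$), and of making transparent both the origin of the constant $\sin(\gamma\pi/2)$ and the role of the threshold $p\geq 2/(2-\gamma)$ as the Hardy--Littlewood--Sobolev endpoint $1/p-1/2=(1-\gamma)/2$. The paper's route is shorter but opaque on both counts.

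Two details deserve tightening. First, your justification of the interchange $\int_0^T\sum_n=\sum_n\int_0^T$ via Tonelli does not work as stated: the individual \emph{integrals} $\int_0^T(g_{1-\gamma}\ast\partial_t u_n)\partial_t u_n\,dt$ are nonnegative, but the integrands are not pointwise nonnegative, so Tonelli does not apply. The correct justification is dominated convergence: the partial sums are bounded by $\|\partial_t^\gamma u(t)\|_H\,\|\partial_t u(t)\|_H$, which lies in $L^1(0,T)$ precisely because $g_{1-\gamma}\ast\partial_t u\in L^{q}$ with $1/q=1/p-(1-\gamma)$ by HLS and $1/p+1/q\leq 1$ iff $p\geq 2/(2-\gamma)$ --- the same hypothesis you already use. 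Second, since $g_\alpha\notin L^1(\mathbb{R})$, the symbol $(i\xi)^{-\alpha}$ must be obtained by a limiting procedure (e.g.\ replacing $g_\alpha$ by $e^{-\varepsilon t}g_\alpha$, whose transform is $(\varepsilon+i\xi)^{-\alpha}$, and letting $\varepsilon\downarrow 0$); for $v\in C_c^\infty(0,T)$ this is routine and your density argument then carries the estimate to $L^p$, but the step should be said explicitly. Neither point affects the validity of the approach.
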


\begin{proof}
We have in $H,$%
\begin{equation*}
\partial _{t}^{\gamma }u\left( t\right) =\sum_{n=1}^{\infty }\left(
g_{1-\gamma }\ast \partial _{t}u_{n}\right) \left( t\right) \psi _{n},\text{ 
}\partial _{t}u\left( t\right) =\sum_{n=1}^{\infty }\partial _{t}u_{n}\left(
t\right) \psi _{n}.
\end{equation*}%
Clearly, $u_{n}\in W^{1,p}\left( 0,T\right) $. It follows that%
\begin{align*}
\int_{0}^{T}\left( \partial _{t}^{\gamma }u\left( t\right) ,\partial
_{t}u\left( t\right) \right) dt& =\sum_{n=1}^{\infty }\left\Vert \psi
_{n}\right\Vert _{H}^{2}\int_{0}^{T}\left( g_{1-\gamma }\ast \partial
_{t}u_{n}\right) \left( t\right) \partial _{t}u_{n}\left( t\right) dt \\
& \geq \sin \frac{\gamma \pi }{2}\sum_{n=1}^{\infty }\left\Vert g_{\left(
1-\gamma \right) /2}\ast \partial _{t}u_{n}\right\Vert _{L^{2}\left(
0,T\right) }^{2}\left\Vert \psi _{n}\right\Vert _{H}^{2}
\end{align*}%
where the last bound follows from \cite[Corollary 2.1]{TYZ}. The proof is
finished.
\end{proof}

We need a crucial regularity assumption on the operator $\mathbb{B}$ in what
follows. For the sake of convenience, we also define $F\left( u\right)
=\int_{0}^{u}f\left( \tau\right) d\tau.$

\begin{enumerate}
\item[(H6)] Let $z\in Z_{\rho ,\infty }$ and $\mathbb{B}\in \mathcal{L}%
\left( L^{2}(D);V_{\widetilde{\alpha }}\right) $ with $u_{0}\in V_{\beta
+2}\subset V_{\alpha =1}$ such that $F\left( u_{0}\right) \in L^{1}\left(
\Omega \right) $. Assume also that (H3) is satisfied with $\alpha =1,$ in $%
\emph{either}$ $\emph{one}$ of the following regimes: (i) $0<\beta <%
\widetilde{\alpha }\leq 1=\alpha $; (ii) $0<\widetilde{\alpha }\leq \beta
\leq 1=\alpha $.
\end{enumerate}

The following global regularity for $\gamma \in \left( 0,1\right) $ is the
main result of the section. Let $\kappa :=\beta ,$ when (i) holds and $%
\kappa :=\widetilde{\alpha },$ if (ii) holds, respectively. Then set $\theta
:=\frac{\gamma }{2}\left( 1+\kappa \right) .$

\begin{theorem}
\label{thm-reg}Let (H6) hold and assume\footnote{%
Strictly speaking, this assumption can be easily relaxed depending on the
application one has in mind for the problem.} $F\left( s\right) \leq -C_{F},$
for all $s\in \mathbb{R}$, for some $C_{F}\in \mathbb{R}$. Then problem (\ref%
{abs-par}) admits a unique (globally-defined) weak solution $u\in Y_{\theta
,1}$. Namely,%
\begin{equation}
u\in W^{1,\frac{1}{1-\theta }-}\left(( 0,T);V_{1}\right) \cap L^{\frac{1}{%
1-\theta }-}\left(( 0,T);V_{\kappa +2}\right) ,\text{ }_{C}\partial
_{t}^{\gamma }u=\partial _{t}^{\gamma }u\in L^{\frac{1}{1-\theta }-}\left(
(0,T);V_{\kappa }\right) ,  \label{reg-en}
\end{equation}%
for any (fixed) but otherwise arbitrary $T>0.$
\end{theorem}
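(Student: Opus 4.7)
The plan is to invoke the local well-posedness and regularity established in the preceding sections, and then rule out finite-time blow-up in the $V_1$-norm by a dissipative fractional energy estimate. Under hypothesis (H6) with $\alpha = 1$, conditions (H1)--(H3) hold with $q = \infty$, and the pointwise bound $\|\partial_t z(t)\|_{L^2(D)} \lesssim t^{\rho - 1}$ built into $Z_{\rho,\infty}$ is precisely the regularity hypothesis (\ref{z-reg}) of Theorem \ref{reg-thm}. Thus Theorem \ref{prelim} and Corollary \ref{main2} supply a unique mild solution $u \in Y_{\theta,1}$ on a maximal interval $[0, T_{\max})$ together with the regularity (\ref{reg-en}) on every closed subinterval $[0,T]$ with $T < T_{\max}$. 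By the blow-up alternative in Theorem \ref{prelim}, it suffices to show $\limsup_{t \to T^-} |u(t)|_1 < \infty$ for every finite $T > 0$.

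The core of the argument is the energy identity obtained by testing (\ref{abs-par}) formally with $\partial_t u$ and integrating over $\Omega \times (0,t)$. The linear and semilinear terms produce exact time derivatives,
\begin{equation*}
(Au(s), \partial_s u(s))_{L^2(\Omega)} = \tfrac12 \tfrac{d}{ds} |u(s)|_1^2, \qquad (f(u(s)), \partial_s u(s))_{L^2(\Omega)} = \tfrac{d}{ds} \int_\Omega F(u(s)) \, dx,
\end{equation*}
while Proposition \ref{frac-co}, applied with $H = L^2(\Omega)$ and the eigenbasis $\{\varphi_n\}$ of $A$, yields $\int_0^t (\partial_s^\gamma u, \partial_s u)_{L^2} \, ds \geq 0$. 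Its applicability rests on $\partial_t u \in L^{2/(2-\gamma)}((0,t); L^2(\Omega))$, which follows from the pointwise bound $\|\partial_s u\|_{L^2} \lesssim |\partial_s u|_1 \lesssim s^{\theta - 1}$ inherent to $Y_{\theta,1}$: the exponent $\theta = \gamma(1+\kappa)/2$ exceeds $\gamma/2$ strictly because (H6) forces $\kappa > 0$, which is exactly the condition making $s^{\theta - 1}$ an $L^{2/(2-\gamma)}(0,T)$ function. Assembling the three identities with the one-sided bound $F \leq -C_F$ (so that $-\int F(u(t)) \, dx \geq C_F |\Omega|$) gives
\begin{equation*}
\tfrac12 |u(t)|_1^2 \leq E_0 - C_F|\Omega| + \int_0^t (\mathbb{B} z(s), \partial_s u(s))_{L^2(\Omega)} \, ds,
\end{equation*}
with $E_0 := \tfrac12 |u_0|_1^2 + \|F(u_0)\|_{L^1(\Omega)}$ finite by (H6). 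An integration by parts in $s$ converts the forcing integral into $(\mathbb{B}z(t), u(t))_{L^2} - (\mathbb{B}z(0), u_0)_{L^2} - \int_0^t (\mathbb{B}\partial_s z, u)_{L^2} \, ds$; the boundary term at $t$ is absorbed into $\tfrac14|u(t)|_1^2$ by Young's inequality (using $V_{\widetilde{\alpha}} \hookrightarrow L^2$), and the remaining integral is bounded by $\|u\|_{L^\infty(0,t;L^2)} \int_0^T s^{\rho-1} \, ds$ and absorbed again. Taking the supremum over $t \in [0,T]$ yields a bound on $|u(t)|_1$ depending only on $T$, $u_0$, and the $Z_{\rho,\infty}$-norm of $z$, so that $T_{\max} = \infty$.

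The principal technical obstacle is the rigorous justification of the above formal testing, since $\partial_t u$ only belongs to $L^{1+\xi}((0,T); V_1)$ with a singular $s^{\theta-1}$ behavior at the origin. I would carry out the computation first on $[\varepsilon, t]$ for small $\varepsilon > 0$, where Corollary \ref{main2} furnishes $u \in L^{1+\xi}(\varepsilon, T; V_{2+\widetilde{\alpha}})$ and $\partial_s u \in L^{1+\xi}(\varepsilon, T; V_1)$, so that $Au(s) \in V_{\widetilde{\alpha}}$ pairs legitimately with $\partial_s u(s) \in V_1 \hookrightarrow V_{-\widetilde{\alpha}}$ (whence the restriction $\widetilde{\alpha} \leq 1$ in (H6) is needed), and the chain rule $(f(u), \partial_s u)_{L^2} = \frac{d}{ds}\int F(u)$ is standard because $V_1 \hookrightarrow L^p(\Omega)$ for sufficiently large $p$ and $f$ is locally Lipschitz. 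The passage $\varepsilon \to 0^+$ then uses the continuity $u \in C([0,T]; V_1)$ at the origin, the continuity of $F$ on bounded sets into $L^1(\Omega)$ granted by (H6), and the uniform dominating integrability $s^{\theta-1} \in L^{2/(2-\gamma)}(0,T)$ for the Proposition \ref{frac-co} term.

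Uniqueness of the global solution follows directly from the local uniqueness of Lemma \ref{T1-integral-H1}: any two global weak solutions coincide on every common closed subinterval of existence, which by the a priori bound is $[0,\infty)$. Finally, the regularity (\ref{reg-en}) is precisely Corollary \ref{main2} re-read on each fixed $[0,T]$ now that $T < T_{\max} = \infty$, with the exponent $1/(1-\theta)-$ reflecting the admissible range $\xi \in (0, \theta/(1-\theta))$ for which $(1+\xi)(\theta - 1) > -1$.
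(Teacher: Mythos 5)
Your proposal is correct and follows essentially the same route as the paper: local existence plus the blow-up alternative of Theorem \ref{prelim}, then the $V_1$-energy estimate obtained by testing with $\partial_t u$, using Proposition \ref{frac-co} (whose applicability hinges exactly on $\kappa>0$ forcing $\theta>\gamma/2$, as you note) to discard the fractional dissipation term, and moving the control term onto $u$ via the product rule before absorbing it. The only cosmetic difference is that you close the estimate by taking suprema and absorbing, whereas the paper packages $|u|_1^2-2(F(u),1)-2(\mathbb{B}z,u)$ into a nonnegative energy $E_\gamma$ and applies Gronwall's lemma; both are valid and equivalent in substance.
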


\begin{proof}
Note that by (H6), $1=\alpha \in I_{\widetilde{\alpha }}\cap I_{\beta }\neq
\varnothing $ and (H2)-(H3) are satisfied since $z\in Z_{\rho ,\infty }.$
Therefore, as in the proof of Corollary \ref{main2}, the problem admits a
unique weak solution satisfying (\ref{reg-en}) for every $0<T<T_{\max }$
(where $T_{\max }>0$ is such that either $T_{\max }=\infty ,$ \emph{or} $%
\lim_{t\rightarrow T_{\max }^{-}}\left\vert u\left( t\right) \right\vert
_{1}=\infty $, if $T_{\max }<\infty $). Observe that $u:\left[ 0,T\right]
\rightarrow V_{1}$ is absolutely continuous, and notice also that $p:=\frac{1%
}{1-\theta }>\frac{2}{2-\gamma }$ if and only if $\theta >\gamma /2,$ which
holds since $\beta >0$ on account\footnote{%
This is the only place where one needs to assume $\beta >0$, in order to
derive the regularity $W^{1,p}\left( (0,T);V_{1}\right) ,$ for $p>2/\left(
2-\gamma \right) $ and effectively exploit Proposition \ref{frac-co}.} of
(i) (the case (ii) is similar)$.$ Thus, Proposition \ref{frac-co} applies
with $H=L^{2}\left( \Omega \right) $, and we obtain%
\begin{align}
& 2\left( \partial _{t}^{\gamma }u\left( t\right) ,\partial _{t}u\left(
t\right) \right)_{L^2(\Omega)} +\frac{d}{dt}\left[ \left\vert u\left(
t\right) \right\vert _{1}^{2}-2\left( F\left( u\left( t\right) \right)
,1\right) -2\left( \mathbb{B}z\left( t\right) ,u\left( t\right) \right) %
\right]  \label{reg-en2} \\
& =-2\left( \mathbb{B\partial }_{t}z\left( t\right) ,u\left( t\right)
\right)_{L^2(\Omega)} ,  \notag
\end{align}%
for almost all $t\in \left( 0,T\right) $. The right hand side of (\ref%
{reg-en2}) is bounded in terms of $C\left\vert u\left( t\right) \right\vert
_{1}t^{\rho -1},$ for some $C>0$ independent of $t,T,$ owing to the fact
that $z\in Z_{\rho ,\infty }$ and $\mathbb{B}\in \mathcal{L}\left(
L^{2}(D);V_{-1}\right) .$ In particular, 
\begin{equation*}
2\left\vert \left( \mathbb{B\partial }_{t}z\left( t\right) ,u\left( t\right)
\right)_{L^2(\Omega)} \right\vert \lesssim \left( 1+\left\vert u\left(
t\right) \right\vert _{1}^{2}\right) t^{\rho -1},\text{ for }0<t\leq T
\end{equation*}%
and%
\begin{equation}
2|\left( \mathbb{B}z\left( t\right) ,u\left( t\right)_{L^2(\Omega)} \right)
|\leq C_{\delta }\left\Vert z\right\Vert _{Z_{\rho ,\infty }}^{2}+\delta
\left\vert u\left( t\right) \right\vert _{1}^{2},\text{ for every }\delta >0.
\label{en3}
\end{equation}%
Set now 
\begin{equation*}
E_{\gamma }\left( t\right) :=C_{T}+\left\vert u\left( t\right) \right\vert
_{1}^{2}-2\left( F\left( u\left( t\right) \right) ,1\right)_{L^2(\Omega)}
-2\left( \mathbb{B}z\left( t\right) ,u\left( t\right) \right)_{L^2(\Omega)} ,
\end{equation*}%
where $C_{T}>0$ is sufficiently large (depending clearly on $z\in Z_{\rho
,\infty }$) such that $E_{\gamma }\geq 0$ on $\left( 0,T\right) $ (this is
possible due to (\ref{en3}), for a sufficiently small $\delta \ll 1$)$.$
Notice also that $E_{\gamma }\left( 0\right) \leq (C_{T}+\left\vert
u_{0}\right\vert _{1}^{2}+\left\Vert F\left( u_{0}\right) \right\Vert
_{L^{1}\left( \Omega \right) })$. We immediately deduce from (\ref{reg-en2})
that, 
\begin{equation*}
\partial _{t}E_{\gamma }\left( t\right) +2\left( \partial _{t}^{\gamma
}u\left( t\right) ,\partial _{t}u\left( t\right) \right)_{L^2(\Omega)} \leq
C_{T}^{^{\prime }}\left( 1+\left\vert u\left( t\right) \right\vert
_{1}^{2}\right) t^{\rho -1}\leq C_{T}^{^{\prime \prime }}E_{\gamma }\left(
t\right) t^{\rho -1}
\end{equation*}%
for some $C_{T}^{^{\prime }},C_{T}^{^{\prime \prime }}>0.$ Integrating the
foregoing inequality over $\left( 0,T\right) ,$ we deduce on account of
Gronwall's lemma, that there exists $C_{T}^{^{\prime \prime \prime }}<\infty
,$ such that%
\begin{equation}
C_{T,\delta }(1+\left\vert u\left( T\right) \right\vert _{1}^{2})\leq
E_{\gamma }\left( T\right) \leq C_{T}^{^{\prime \prime \prime }}E_{\gamma
}\left( 0\right) <\infty ,  \label{en4}
\end{equation}%
for any $T>0$. The energy inequality (\ref{en4}) finally yields in view of
Theorem \ref{prelim} that $T_{\max }=\infty .$ The proof is finished.
\end{proof}

The above proof underlines once again the additional smoothness required of
the sources on the right hand side of the equation in order to be able to
(rigorously) justify the energy equality\footnote{%
Note also that a smoother initial datum $u_{0}\in V_{1+\varepsilon },$ for
some $\varepsilon >0$, is neccesary due to the "loss" of regularity of the
solution flow near $t=0.$} for (\ref{abs-par}). This is in contrast to what
happens in the classical case where generally much less is required on the
sources on the right-hand side (and, which is due to the absence of strongly
singular behavior of the solution operator near $t=0$). This result is
optimal since $p=2$ in Proposition \ref{frac-co} corresponds exactly to the
case when $\gamma =1$. In general, $1<p<2$ whenever $0<\gamma <1$, and the
value of $p$ diminishes toward the value $1$, as $\gamma $ goes to zero, no
matter how smooth the right-hand side turns out to be.\footnote{%
A common mistake we had found in scientific journal publications nowadays is
the fact that many authors generally assume in their definition of weak or
smooth solutions that the $p=2$ regularity can be always achieved for arbitrary $0<\gamma <1$ and when sources are sufficiently smooth.}

\section{Control setting and operator examples}

\label{ss:csoe}

In this section we give some examples of operators, functionals, and
nonlinearities that enter in our framework described in the previous
sections.

\subsection{Cost functionals and admissible set}s

\label{ss:csoe2}

We set, for given functions%
\begin{equation}
z_{Q}\in L^{2}\left( (0,T);L^{2}\left( \Omega \right) \right) ,\text{ }%
z_{\Sigma }\in L^{2}\left( (0,T);L^{2}\left( \partial \Omega \right) \right)
\label{reg1}
\end{equation}%
and constants $a_{i}\geq 0$ (not all identically zero), the cost functional%
\begin{equation}
J_{1}(u)=\frac{a_{1}}{2}\int_{0}^{T}\left\Vert u\left( \cdot ,t\right)
-z_{Q}\left( \cdot ,t\right) \right\Vert _{L^{2}\left( \Omega \right)
}^{2}dt+\frac{a_{2}}{2}\int_{0}^{T}\left\Vert u\left( \cdot ,t\right)
-z_{\Sigma }\left( \cdot ,t\right) \right\Vert _{L^{2}\left( \partial \Omega
\right) }^{2}dt.  \label{J1}
\end{equation}%
Additionally, we let 
\begin{equation}
J_{2}(z)=\frac{\zeta }{2}\Vert z\Vert _{L^{2}((0,T)\times D)}^{2},
\label{J2}
\end{equation}%
where $D=\Omega $ if the control $z$ lies in the interior of $\Omega $, and $%
D=\partial \Omega $ in case the control is placed on the boundary $\partial
\Omega $. Moreover, $\zeta >0$ is a control regularization parameter. We
consider the problem of minimizing the total cost functional $%
J:=J_{1}+J_{2}, $ subject to the constraint $z\in Z_{ad},$ where we define
the admissible control set (with prescribed singular behavior near $t=0$) to
be 
\begin{equation}
Z_{ad}:=\{z\in W^{1,2}((0,T);L^{2}(D))\ :\ \left\Vert \partial
_{t}z\right\Vert _{L^{2}\left( D\right) }\leq Mt^{\rho -1},\mbox{ and
}z_{a}\leq z\leq z_{b},\; \mbox{a.e. in }(0,T)\times D\}.
\label{admin-set}
\end{equation}%
Here, $z_{a},z_{b}\in L^{2}((0,T)\times D)$ with $z_{a}\leq z_{b}$ are
given, and we assume that $M>0,$ $\rho >1/2$. Notice that, $Z_{ad}$ is a
closed and convex subset of $Z_{\rho ,\infty }$. Moreover, $Z_{ad}$ depicts
a generic situation with box constraints $z_{a}$ and $z_{b}$. The hypothesis
(H5) is then satisfied by the above $J.$

Our aim is to formulate necessary optimality conditions for our nonlocal in
time (subdiffusive) problem. We introduce $\mathcal{U}$ as a nonempty open
subset of $Z_{\rho ,\infty }$ which contains $Z_{ad}$; without loss of
generality, we may assume that $\mathcal{U}$ is also open in $%
W^{1,2}((0,T);L^{2}(D))$. Recall that the control-to-state mapping $\mathcal{%
S}:\mathcal{U}\rightarrow Y_{\theta ,\alpha },$ $\mathcal{S}\left( z\right)
=u$ is the unique (variational)\ solution of (\ref{abs-par}) (in the sense
of Corollary \ref{main2}). In the context of (\ref{J1})-(\ref{J2}), the
`reduced' cost functional $\mathcal{J}:\mathcal{U}\rightarrow \mathbb{R}$ is
then given by%
\begin{equation*}
\mathcal{J}\left( z\right) =J\left( u,z\right) :=J_{1}\left( \mathcal{S}%
\left( z\right) \right) +J_{2}\left( z\right) .
\end{equation*}%
As $Z_{ad}$ is convex, the desired necessary condition for optimality is 
\begin{equation}
\left\langle d\mathcal{J}\left( z_{\ast }\right) ,z-z_{\ast }\right\rangle
\geq 0,  \label{oc}
\end{equation}%
for every $z\in Z_{ad}$ (for a proper optimal control $z_{\ast }\in Z_{ad}$%
), provided that $d\mathcal{J}\left( z_{\ast }\right) $ is well-defined (at
least in the Gâteaux sense) in the dual space $\Big(W^{1,2}((0,T);L^{2}(D))%
\Big)^{\ast }$. Following Section \ref{ss:optimal}, it turns out that $%
\mathcal{S}$ is (continuously) Fr\'echet differentiable at $z_{\ast }$ so
that the chain rule can be applied. As we had seen in Lemma \ref{lem-lc2},
this leads to the linearized problem (\ref{lin-sys2}) which can then be
stated for a generic element $h\in \mathcal{U}$. This, in turn, leads to the
fact that the Frechet derivative $d\mathcal{S}\left( z\right) \in \mathcal{L}%
\left( Z_{\rho ,\infty },Y_{\theta ,\alpha }\right) $ exists for (a given
generic) $\rho >1/2$, such that $d\mathcal{S}\left( z\right) h=\eta ,$ where 
$\eta $ is the unique (variational)\ solution of the aforementioned
linearized problem. The latter can be described in detail following the
statement (ii) of Proposition \ref{reg-lin}, provided that some regularity
criteria for $k=k\left( t\right) \in Y_{\pi ,\widetilde{\alpha }}$ is given
for another (generic)$\ $parameter $\pi \in (0,1]$ (to be determined below,
see (\ref{reg2})). We thus can immediately apply the chain rule and exploit
the formula (\ref{eq:dJ}), to find that (\ref{oc}) takes on the form%
\begin{equation}
a_{1}\int_{0}^{T}\left( u_{\ast }-z_{Q},\eta \right) _{L^{2}\left( \Omega
\right) }dt+a_{2}\int_{0}^{T}\left( u_{\ast }-z_{\Sigma },\eta \right)
_{L^{2}\left( \partial \Omega \right) }dt+\zeta \int_{0}^{T}\int_{D}z_{\ast
}hdxdt\geq 0,  \label{osc2}
\end{equation}%
for any given $z\in Z_{ad},$ where the function $\eta $ is the solution of
the linearized problem corresponding to $h=z-z_{\ast }.$ As usual, the final
procedure consists in eliminating $\eta $ in (\ref{osc2}) by exploiting the
`backward-in-time' solution 
\begin{equation*}
w\in L^{1+\xi }\left( (0,T);V_{2+\widetilde{\alpha }}\right) ,\text{ }%
\partial _{t,T}^{\gamma }w\in L^{1+\xi }\left( (0,T);V_{\widetilde{\alpha }%
}\right) ,
\end{equation*}%
of the corresponding adjoint problem (see (ii) of Proposition \ref{reg-lin}%
), but now set%
\begin{equation*}
k:=d_{u}J_{1}\left( \mathcal{S}\left( z_{\ast }\right) ,z_{\ast }\right) =%
\binom{a_{1}\left( u_{\ast }-z_{Q}\right) }{a_{2}\left( u_{\ast }-z_{\Sigma
}\right) }\in Y_{\pi ,\widetilde{\alpha }}\text{.}
\end{equation*}%
Namely, $w$ satisfies%
\begin{equation*}
\langle \partial _{t,T}^{\gamma }w\left( t\right) +Aw\left( t\right)
,v\rangle _{V_{\widetilde{\alpha }},V_{-\widetilde{\alpha }}}=\langle
f^{^{\prime }}\left( u\left( t\right) \right) w\left( t\right) +k\left(
t\right) ,v\rangle _{V_{\widetilde{\alpha }},V_{-\widetilde{\alpha }}},
\end{equation*}%
for any $v\in V_{-\widetilde{\alpha }}$, for almost all $t\in (0,T)$. We
note that for $z_{\ast }\in Z_{ad}\subset Z_{\rho ,\infty },$ it follows
from Lemma \ref{lem-lc2} that $u_{\ast }=\mathcal{S}\left( z_{\ast }\right)
\in Y_{\theta ,\alpha }\subseteq Y_{\theta ,\widetilde{\alpha }}$ (since $%
\alpha \geq \widetilde{\alpha }$). Thus, we must choose $\pi :=\theta $ and
consider further regularity assumptions on the data, i.e.,%
\begin{equation}
z_{Q},z_{\Sigma }\in Y_{\theta ,\widetilde{\alpha }}.  \label{reg2}
\end{equation}%
This allows\footnote{%
By the definition of $C_{1},C_{2},$ (\ref{reg2}) implies only additional
temporal regularity of the data.} to conclude that $k\in Y_{\theta ,%
\widetilde{\alpha }};$ finally, we can eliminate $\eta \in L^{1+\xi }\left(
(0,T);V_{2+\widetilde{\alpha }}\right) \subset L^{1+\xi }\left( (0,T);V_{-%
\widetilde{\alpha }}\right) $ (where $\partial _{t}^{\gamma }\eta \in
L^{1+\xi }\left( (0,T);V_{\widetilde{\alpha }}\right) $). Indeed, notice
that all the operations on the left-hand side of (\ref{osc2}) are now
well-defined in view of (\ref{reg1}) and (\ref{reg2}), respectively, and we
can perform calculations exactly as in the proof of Lemma \ref{lem:dSstar}.

We can state a simpler form of the optimality conditions (\ref{osc2}) in the
context of various examples of diffusion operators. We do that next.

\subsection{The fractional Neumann problem for the Laplacian}

For the sake of simplicity, assume that $\Omega \subset \mathbb{R}^{n},$ $%
n\geq 1,$ has a smooth boundary $\partial \Omega $. In this section denote
by $B:=-\Delta _{\Omega ,N}$ the realization of $(-\Delta )$ in $%
L^{2}(\Omega )$ with the zero Neumann boundary condition. Since $\Omega $ is
assumed to be smooth we have that $D(B)=\{u\in H^{2}(\Omega ):\partial _{\nu
}u=0$ on $\partial \Omega \}$. Thus, fractional powers $A:=B^{s}$\ of order $%
s\in \lbrack 0,1]$ can be defined as usual by the semigroup theory and the
following domain characterization holds:%
\begin{equation*}
X_{2s}:=D\left( \left( B+I\right) ^{s}\right) =\left\{ 
\begin{array}{ll}
\{u\in H^{2s}\left( \Omega \right) :\partial _{\nu }u=0\text{ on }\partial
\Omega \}, & s\in (3/4,1] \\ 
H^{2s}\left( \Omega \right) , & s\in (0,3/4),%
\end{array}%
\right.
\end{equation*}%
(in the case $s=3/4,$ $u\in X_{3/2}\subset H^{3/2}\left( \Omega \right) ,$
and equality does not hold). As usual we equip $D\left( A\right) $ with the $%
H^{2s}$-norm. For each $s\in (0,1]$, we consider an internally controlled
system 
\begin{equation}
\begin{cases}
_{C}\partial _{t}^{\gamma }u+B^{s}u=f(u\left( t\right) )+z,\quad & 
\mbox{in
}Q:=(0,T)\times \Omega , \\ 
&  \\ 
u(0,\cdot )=u_{0}\quad & \mbox{in }\Omega .%
\end{cases}
\label{Lin-eq-RL-L}
\end{equation}%
This can be rewritten as the abstract Cauchy problem%
\begin{equation}
\begin{cases}
_{C}\partial _{t}^{\gamma }u\left( t\right) +Au\left( t\right) =f\left(
u\left( t\right) \right) +\mathbb{B}z\left( t\right) ,\quad & t\in (0,T), \\ 
u(0)=u_{0}, & 
\end{cases}
\label{Cau-eq-RL-L}
\end{equation}%
where $\mathbb{B}=\mathbb{I}$ and $D:=\Omega $. Notice that for $\alpha \in
(0,2]$, $V_{\alpha }=D(A^{\alpha /2})=X_{s\alpha }.$ In what follows, we
thus let $\widetilde{\alpha }=0$, $\beta =-1$ and $s\in (0,1].$

\begin{example}
\label{prot1}One prototype for $f$ is a cubic type of nonlinearity $%
f=-F^{^{\prime }}$\ associated with the double-well potential%
\begin{equation}
F\left( u\right) =c_{1}u^{4}-c_{2}u^{2},\text{ for }c_{2}>c_{1}>0\text{.}
\label{double-well}
\end{equation}%
With the choice (\ref{double-well}), one refers to (\ref{Lin-eq-RL-L}) as an
internal optimal control problem for the subdiffusive Allen-Cahn (or phase
field) type equation. The assumptions (H3), (H4), (H4bis)\ are satisfied by
the cubic nonlinearity provided that $s\left( 3\alpha +1\right) >\frac{n}{2}$
for $s\in (0,1]$ and $\alpha \in \left( 0,1\right) .$
\end{example}

\begin{example}
\label{prot2}When $f$ is a logistic reaction term of the form $ru\left(
1-uK^{-1}\right) $ ($r,K>0$), the associated system (\ref{Cau-eq-RL-L}) is
an internal control problem for the subdiffusive Fisher-KPP equation. The
assumptions (H3), (H4), (H4bis)\ are satisfied by the Fisher-KPP type
logistic source provided that $s\in (0,1]$, $\alpha \in \left( 0,1\right) $
satisfy the condition $s\left( 2\alpha +1\right) >\frac{n}{2}.$
\end{example}

\begin{example}
\label{prot3}We can also apply our results to the associated optimal control
problem for a (subdiffusive) Burger's equation, subject to nonlocal
advection or transport. Indeed, let $f\left( u\right) :=-u$div$\left( J\ast
u\right) =-u(\mathcal{G}\ast u),$ where $\mathcal{G}:=$div$\left( J\right) ,$%
\begin{equation*}
\left( J\ast u\right) \left( x\right) =\int_{\Omega }J\left( x-y\right)
u\left( y\right) dy,\text{ }x\in \Omega ,
\end{equation*}%
for some $J\in W_{loc}^{1,div}\left( \mathbb{R}^{n}\right) :=\left\{ J\in
L_{loc}^{1}\left( \mathbb{R}^{n}\right) :\mathcal{G}\in L_{loc}^{1}\left( 
\mathbb{R}^{n}\right) \right\} $. We justify this definition in the one
dimensional case, $\Omega =\left( -L,L\right) $, $L>0$. Given the Dirac
delta function acting $\delta _{x}$ at a point $x\in \Omega ,$ 
\begin{equation*}
\delta _{x}\left[ u\right] =\int_{\Omega }u\left( y\right) \delta \left(
x-y\right) dy=u\left( x\right) ,\text{ }x\in \Omega ,
\end{equation*}%
as a distribution, $\delta _{x}\in C^{k}$ and $\partial _{x}^{k}\delta _{x}%
\left[ u\right] =\left( -1\right) ^{k}\delta _{x}\left[ \partial _{x}u\right]
=\left( -1\right) ^{k}\partial _{x}^{k}u\left( x\right) ,$ for every
positive integer $k$. Then, we observe that, for $u\in C_{c}^{1}\left(
\Omega \right) $ and $v=-\partial _{y}\left( \delta \left( x-y\right)
\right) ,$ we find that%
\begin{equation*}
\int_{\Omega }v\left( y\right) u\left( y\right) dy=\int_{\Omega }\partial
_{y}u\left( y\right) \delta \left( x-y\right) dy=\partial _{x}u\left(
x\right) .
\end{equation*}%
Thus, whenever $u\in C_{c}^{1}\left( \Omega \right) $, $\partial _{x}u\left(
x\right) $ occurs as an approximation of the convolution $\mathcal{G}\ast u,$
where $\mathcal{G}=-\partial _{y}\delta \left( x-y\right) $ (i.e., $f\left(
u\right) \approx -u\left( u_{x}\right) $). Thus, in general we may replace
any $\partial _{x}u\left( x\right) $ with a convolution $\mathcal{G}\ast u$
to reflect the nonlocal behavior of transport at microscopic levels. The
assumptions (H3), (H4)-(H4bis) apply to the nonlocal nonlinearity provided
that $s\left( 3\alpha +1\right) \geq n$ if $n>2s$; no restrictions are
required when $n\leq 2s.$
\end{example}

However, in what follows we will not take a particular choice for $f\left(
u\right) $ since many of the technical assumptions (H3), (H4), (H4bis) can
be verified directly in applications for such nonlinearities.

\begin{corollary}
\label{main2 copy2}Let $u_{0}\in V_{1}=X_{s}$ and $z\in Z_{\rho ,\infty },$
for some $\rho >1/2.$ Assume (H3) for some $\alpha \in \left( 0,1\right) $
and $\beta =-1$. Then (\ref{Lin-eq-RL-L}) admits a unique weak solution on $%
\left( 0,T_{\max }\right) $ such that $u\in Y_{\theta ,\alpha }$ with $%
\theta :=\frac{\gamma }{2}\left( 1-\alpha \right) .$ The variational equality%
\begin{equation*}
\langle _{C}\partial _{t}^{\gamma }u\left( t\right) +Au\left( t\right)
-z,v\rangle _{V_{-1},V_{1}}=\langle f\left( u\left( t\right) \right)
,v\rangle _{V_{-1},V_{1}},
\end{equation*}%
holds for any $v\in V_{1}$, for almost all $t\in (0,T_{\max }).$ In
particular, 
\begin{equation*}
u\in W^{1,1+\xi }\left( \left( 0,T\right) ;V_{\alpha }\right) ,\text{ }Au\in
L^{1+\xi }\left( (0,T);X_{s}^{\ast }\right) ,\text{ }_{C}\partial
_{t}^{\gamma }u\in L^{1+\xi }\left( (0,T);X_{s}^{\ast }\right) ,
\end{equation*}%
for any $T<T_{\max },$ where $\xi \in (0,\frac{\theta }{1-\theta }).$
\end{corollary}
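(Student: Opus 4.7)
The plan is to recognize this corollary as a direct specialization of Corollary \ref{main2} (combined with Remark \ref{theta}(2), which covers the case $\widetilde{\alpha} \neq \beta$). The task reduces to checking that the generic hypotheses (HA) and (H1)--(H3) are met for the concrete choice $A = B^s$ (with $B = -\Delta_{\Omega,N}$, shifted by the identity per the footnote to ensure strict positivity), $\mathbb{B} = \mathrm{Id}$, $D = \Omega$, $\widetilde{\alpha} = 0$, $\beta = -1$, and $\alpha \in (0,1)$, and then reading off the conclusions.

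First, (HA) holds because the Neumann Laplacian (shifted if necessary) is self-adjoint, nonnegative, and has compact resolvent on $L^2(\Omega)$, and these properties transfer to every positive fractional power $B^s$ via spectral calculus; the characterization $V_\alpha = X_{s\alpha}$ stated in the lead-up to \eqref{Lin-eq-RL-L} is precisely the identification of the domains $D((I+B)^{s\alpha/2})$ from the spectral framework of Section \ref{sec-2}. For (H1), the identity map belongs to $\mathcal{L}(L^2(\Omega);V_0)$ so $\widetilde{\alpha} = 0 \in [-1,0]$, and with $\beta = -1$ we compute $I_{\widetilde{\alpha}} \cap I_\beta = [0,2) \cap [-1,1) = [0,1)$, which contains any admissible $\alpha \in (0,1)$; the hypothesis $u_0 \in V_1$ then yields $u_0 \in V_\alpha$ by continuous embedding. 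For (H2), every $z \in Z_{\rho,\infty}$ lies in $C([0,T];L^2(D)) \subset L^\infty((0,T);L^2(D))$, so we may take $q = \infty$, which trivially exceeds $\frac{2}{\gamma(2-\alpha+\widetilde{\alpha})}$. Finally, (H3) is assumed outright, and the condition \eqref{z-reg} with exponent $\rho > 1/2 > 0$ is built into the definition of $Z_{\rho,\infty}$, so the hypotheses underlying Corollary \ref{main2} (in the form of Remark \ref{theta}(2)) are in place.

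Once the hypotheses are verified, I would invoke Corollary \ref{main2} together with Remark \ref{theta}(2) to conclude. Since $\widetilde{\alpha} = 0 \neq -1 = \beta$, the correct value of $\theta$ is
\[
\theta = \min\left\{\tfrac{\gamma}{2}(2-\alpha+\widetilde{\alpha}) - \tfrac{1}{q}, \; \tfrac{\gamma}{2}(2-\alpha+\beta)\right\} = \min\left\{\tfrac{\gamma(2-\alpha)}{2}, \tfrac{\gamma(1-\alpha)}{2}\right\} = \tfrac{\gamma(1-\alpha)}{2},
\]
as claimed. Remark \ref{theta}(2) then asserts that the equation holds in $V_{\min\{\widetilde{\alpha},\beta\}} = V_{-1} = X_s^*$ (tested against $v \in V_{-\beta} = V_1 = X_s$), which gives the displayed variational identity, and that $u$ has the regularity $W^{1,1+\xi}((0,T);V_\alpha) \cap L^\sigma((0,T);V_{2+\min\{\widetilde{\alpha},\beta\}}) = L^\sigma((0,T);V_1)$ with ${}_C\partial_t^\gamma u \in L^{1+\xi}((0,T);V_{-1}) = L^{1+\xi}((0,T);X_s^*)$.

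The statement $Au \in L^{1+\xi}((0,T);X_s^*)$ then follows either by comparison in the variational equation (since $f(u), \mathbb{B}z \in L^\infty((0,T);V_0) \hookrightarrow L^{1+\xi}((0,T);X_s^*)$ and ${}_C\partial_t^\gamma u \in L^{1+\xi}((0,T);X_s^*)$) or directly from the fact that $A$ extends to a bounded isomorphism $V_1 \to V_{-1}$ by spectral calculus, combined with $u \in L^\sigma((0,T);V_1)$. There is no genuine obstacle here; the proof is an accounting exercise in matching the generic parameters to the concrete Neumann fractional Laplacian setting. The only point requiring slight care is to confirm that one is in regime (2) rather than (1) of Remark \ref{theta}, so that the minimum-of-two-quantities formula for $\theta$ (rather than the single formula of Corollary \ref{main2}) applies and yields exactly $\gamma(1-\alpha)/2$.
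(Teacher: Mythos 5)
Your proposal is correct and follows essentially the same route as the paper's own (very terse) proof: verify that (HA), (H1)--(H3) hold for $A=B^{s}$, $\mathbb{B}=\mathbb{I}$, $\widetilde{\alpha}=0$, $\beta=-1$, $q=\infty$, and then invoke Corollary \ref{main2} together with Remark \ref{theta}(2). Your explicit check that one is in the regime $\widetilde{\alpha}\neq\beta$, so that $\theta=\min\{\gamma(2-\alpha)/2,\ \gamma(1-\alpha)/2\}=\gamma(1-\alpha)/2$ and the equation holds in $V_{\min\{\widetilde{\alpha},\beta\}}=V_{-1}=X_{s}^{\ast}$, is a useful detail that the paper leaves implicit.
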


\begin{proof}
By assumption, (H1)-(H2) are automatically satisfied for the operator $%
\mathbb{B}=\mathbb{I}$ and the initial datum $u_{0}$, since $V_{1}\subset
V_{\alpha }$ and $z\in Z_{\rho ,\infty }.$ Thus the conclusions of Corollary %
\ref{main2} hold.
\end{proof}

The conclusions of Section \ref{ss:optimal} hold as well provided that $f$
satisfies the corresponding hypotheses (H4)-(H4bis) in that section with a
given $\alpha \in \left( 0,1\right) $ and $\beta =-1.$

In what follows we consider the cost functional $J$, defined in Section \ref%
{ss:csoe}, by setting $a_{2}=0,$ $a_{1}>0$ and $\zeta \geq 0$. Next, the
datum $z_{Q}$ is assumed to belong to $Y_{\theta ,0},$ for the same value $%
\theta =\frac{\gamma }{2}\left( 1-\alpha \right) $, as in Corollary \ref%
{main2 copy2}. We take the same admissible set $Z_{ad},$ as defined in (\ref%
{admin-set}).

Consequently, it follows on account of Theorem \ref{fin-thm-necessary-opt}
and the previous considerations of Section \ref{ss:csoe2}, the following.

\begin{theorem}
\label{final-optimal1}Let $z_{\ast }\in Z_{ad}$ be an admissible optimal
control and $u_{\ast }=\mathcal{S}\left( z_{\ast }\right) ,$ the associated
state. The necessary optimal condition (\ref{osc2}) reads%
\begin{equation}  \label{eq:VI_0}
\int_{0}^{T}\int_{\Omega }\left( w+\zeta z_{\ast }\right) \left[ z-z_{\ast }%
\right] dxdt\geq 0,\text{ for all }z\in Z_{ad},
\end{equation}%
where $T<T_{\max }$.
\end{theorem}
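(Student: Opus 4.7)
The plan is to specialize the abstract Theorem~\ref{fin-thm-necessary-opt} to the concrete choices of $J_1$, $J_2$, $\mathbb{B}$, and $D$ made in this subsection, and then simply read off the required variational inequality.

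First, I would verify that all the structural assumptions are in force. In this setting $\mathbb{B}=\mathbb{I}$ (so $\mathbb{B}^{\ast}=\mathbb{I}$) and $D=\Omega$, while $\widetilde{\alpha}=0$, $\beta=-1$, and $\alpha\in(0,1)$, so that $Y_{\theta,\alpha}\subseteq Y_{\theta,\widetilde{\alpha}}=Y_{\theta,0}$ with $\theta=\frac{\gamma}{2}(1-\alpha)$. Corollary~\ref{main2 copy2} guarantees that $u_{\ast}=\mathcal{S}(z_{\ast})\in Y_{\theta,\alpha}\subseteq Y_{\theta,0}$, and by hypothesis $z_Q\in Y_{\theta,0}$, hence
\begin{equation*}
k:=d_{u}J\big(\mathcal{S}(z_{\ast}),z_{\ast}\big)=a_{1}(u_{\ast}-z_Q)\in Y_{\theta,0}=Y_{\theta,\widetilde{\alpha}}.
\end{equation*}
This is precisely the regularity needed to invoke part (ii) of Proposition~\ref{reg-lin}, which yields a unique adjoint state $w\in Y_{\theta,\alpha}$ solving
\begin{equation*}
\partial_{t,T}^{\gamma}w+Aw=f'(u_{\ast})w+a_{1}(u_{\ast}-z_Q),\qquad I_{t,T}^{1-\gamma}w(T,\cdot)=0,
\end{equation*}
together with $w\in L^{1+\xi}((0,T);V_{2})$ and $\partial_{t,T}^{\gamma}w\in L^{1+\xi}((0,T);L^{2}(\Omega))$.

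Second, I would compute the $z$-partial derivative of the total cost. Since $J_1$ depends only on the state and $J_2(z)=\frac{\zeta}{2}\|z\|_{L^{2}((0,T)\times\Omega)}^{2}$ is quadratic, one gets $d_z J(u_{\ast},z_{\ast})=\zeta z_{\ast}\in L^{2}((0,T)\times\Omega)$. The Lipschitz/differentiability hypotheses (H4)--(H4bis) on $f$ and assumption (H5) on $J_1,J_2$ are verified in the examples of interest (cf. Examples~\ref{prot1}--\ref{prot3}), and $Z_{ad}$ defined in (\ref{admin-set}) is closed and convex in $Z_{\rho,\infty}$ as noted in Section~\ref{ss:csoe2}.

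Third, with these ingredients in hand, Theorem~\ref{fin-thm-necessary-opt} applied with $\mathbb{B}^{\ast}=\mathbb{I}$ and $\psi=d_uJ(\mathcal{S}(z_{\ast}),z_{\ast})=a_{1}(u_{\ast}-z_Q)$ yields
\begin{equation*}
\int_{0}^{T}\int_{\Omega}\Big(\mathbb{B}^{\ast}w(t)+d_zJ(\mathcal{S}(z_{\ast}),z_{\ast})\Big)\,[z(t)-z_{\ast}(t)]\,dx\,dt \geq 0
\end{equation*}
for every $z\in Z_{ad}$, which after substituting $\mathbb{B}^{\ast}w=w$ and $d_zJ=\zeta z_{\ast}$ is exactly \eqref{eq:VI_0}. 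The only subtlety --- and the one I would treat with care --- is the verification that $k=a_{1}(u_{\ast}-z_Q)$ lies in the class $Y_{\theta,\widetilde{\alpha}}$ required by Proposition~\ref{reg-lin}(ii), since this is where the interplay between the prescribed singular behaviour of $z$ near $t=0$ (which propagates to $\partial_t u_\ast$ via Corollary~\ref{main2 copy2}) and the smoothness of $z_Q$ is crucial; without it, neither the adjoint equation is well-posed in the required strong sense nor does the integration-by-parts in time in the proof of Lemma~\ref{lem:dSstar} go through. Everything else is a direct specialization.
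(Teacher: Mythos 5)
Your proposal is correct and follows essentially the same route as the paper: the paper itself derives Theorem~\ref{final-optimal1} by direct specialization of Theorem~\ref{fin-thm-necessary-opt} together with the considerations of Section~\ref{ss:csoe2} (choosing $a_2=0$, $\mathbb{B}=\mathbb{B}^{\ast}=\mathbb{I}$, $D=\Omega$, and requiring $z_Q\in Y_{\theta,0}$ so that $k=a_1(u_{\ast}-z_Q)\in Y_{\theta,\widetilde{\alpha}}$ and Proposition~\ref{reg-lin}(ii) applies). You correctly flag the one genuine checkpoint, namely the membership $k\in Y_{\theta,\widetilde{\alpha}}$ needed for the adjoint state and the integration by parts in Lemma~\ref{lem:dSstar}, which is exactly the point the paper emphasizes when it fixes $\pi:=\theta$ and imposes the regularity \eqref{reg2} on $z_Q$.
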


\begin{remark}
\label{rem:proj} \textrm{{\textsf{Notice that since $Z_{ad}$ is a closed
convex subset of a Hilbert space,  we have in view of \cite[Theorem~3.3.5]%
{HAttouch_GButtazzo_GMichaille_2014a}, instead of solving the variational
inequality \eqref{eq:VI_0}, we can equivalently find $z_{\ast }$ by
computing the projection of $-\frac{1}{\zeta }w$ onto the set $Z_{ad}$ with
respect to the topology on $Z_{ad}$. However, this projection maybe
challenging to evaluate in general, see for instance \cite%
{HAntil_RHNochetto_PVenegas_2018a} for the $H^{1}$-case where each
projection requires solving a variational inequality itself.}} }
\end{remark}

\subsection{The nonhomogeneous Wentzell-Robin problem for the Laplacian}

\label{sec-wentzell}

Assume that $\Omega $ has a Lipschitz continuous boundary. Let $\beta \in
L^{\infty }(\partial \Omega )$ be such that $\beta (x)\geq \beta _{0}>0$ for 
$\sigma $-a.e. $x\in \partial \Omega $ and for some $\beta _{0}\in {\mathbb{R%
}}$, $\delta \in \{0,1\}$ and 
\begin{equation*}
\mathbb{H}^{1,\delta }(\overline{\Omega }):=\Big\{U=(u,u|_{\partial \Omega
}):\;u\in H^{1}(\Omega )\mbox{ and }\;\delta u|_{\partial \Omega }\in
H^{1}(\partial \Omega )\Big\},
\end{equation*}%
be endowed with the norm 
\begin{equation*}
\Vert u\Vert _{\mathbb{H}^{1,\delta }(\overline{\Omega })}=%
\begin{cases}
\left( \Vert u\Vert _{H^{1}(\Omega )}^{2}+\Vert u\Vert _{H^{1}(\partial
\Omega )}^{2}\right) ^{\frac{1}{2}}\;\; & \mbox{ if }\;\delta =1 \\ 
\left( \Vert u\Vert _{H^{1}(\Omega )}^{2}+\Vert u\Vert _{H^{\frac{1}{2}%
}(\partial \Omega )}^{2}\right) ^{\frac{1}{2}}\;\; & \mbox{ if }\;\delta =0.%
\end{cases}%
\end{equation*}%
Then 
\begin{equation}
\mathbb{H}^{1,0}(\overline{\Omega })\hookrightarrow L^{q}(\Omega )\times
L^{q}(\partial \Omega ),  \label{went-emb-0}
\end{equation}%
with 
\begin{equation}
1\leq q\leq \frac{2(n-1)}{n-2}\;\mbox{ if }\;n>2\;\mbox{ and }1\leq q<\infty
\;\mbox{ if }\;n\leq 2,  \label{q-0}
\end{equation}%
and 
\begin{equation}
\mathbb{H}^{1,1}(\overline{\Omega })\hookrightarrow L^{q}(\Omega )\times
L^{q}(\partial \Omega ),  \label{went-emb-1}
\end{equation}%
with 
\begin{equation}
1\leq q\leq \frac{2n}{n-2}\;\mbox{ if }\;n>2\;\mbox{ and }1\leq q<\infty \;%
\mbox{ if }\;n\leq 2.  \label{q-1}
\end{equation}%
Let $\mathcal{E}_{\delta ,W}$ with $D(\mathcal{E}_{\delta ,W}):=\mathbb{H}%
^{1,\delta }(\overline{\Omega })$ be given by 
\begin{equation}
\mathcal{E}_{\delta ,W}(U,V):=\int_{\Omega }\nabla u\cdot \nabla vdx+\delta
\int_{\partial \Omega }\nabla _{\Gamma }u\cdot \nabla _{\Gamma }vd\sigma
+\int_{\partial \Omega }\beta (x)uvd\sigma .  \label{form-went}
\end{equation}%
Let $\Delta _{\delta ,W}$ be the self-adjoint operator in $L^{2}(\Omega
)\times L^{2}(\partial \Omega )$ associated with $\mathcal{E}_{\delta ,W}$.
That is, 
\begin{equation}
\begin{cases}
D(\Delta _{\delta ,W})=\Big\{U:=(u,u|_{\Gamma })\in \mathbb{H}^{1,\delta }(%
\overline{\Omega }):\;\exists \;F:=(f,g)\in L^{2}(\Omega )\times
L^{2}(\partial \Omega ), \\ 
\qquad \qquad \qquad \mathcal{E}_{\delta ,W}(U,V)=(f,v)_{L^{2}(\Omega
)}+(g,v)_{L^{2}(\partial \Omega )}\;\forall \;V:=(v,v|_{\partial \Omega
})\in \mathbb{H}^{1,\delta }(\overline{\Omega })\Big\} \\ 
\Delta _{\delta ,W}U=F.%
\end{cases}%
\end{equation}

Then $\Delta _{\delta ,W}$ is a realization in $L^{2}(\Omega )\times
L^{2}(\partial \Omega)$ of $\Big(-\Delta,-\Delta_\Gamma\Big)$ with the
generalized Wentzell boundary conditions. More precisely, using an
integration by parts arguments, we have that 
\begin{align*}
D(\Delta_{\delta,W})=\Big\{(u,u|_{\Gamma})\in \mathbb{H}^{1,\delta}(%
\overline{\Omega}):\;&\Delta u\in L^{2}(\Omega )\mbox{ and } \\
&\hfill-\delta\Delta_{\Gamma}(u|_{\partial\Omega})+\partial _{\nu
}u+\beta(u|_{\partial\Omega})\in L^2(\partial \Omega) \Big\},
\end{align*}
and 
\begin{align*}
\Delta _{\delta,W}(u,u|_{\Gamma})=\Big(-\Delta
u,-\delta\Delta_{\Gamma}(u|_{\partial\Omega})+\partial _{\nu
}u+\beta(u|_{\partial\Omega})\Big).
\end{align*}%
We notice that for $1\le q\le\infty$, the space $L^q(\Omega)\times
L^q(\partial\Omega)$ endowed with the norm 
\begin{equation*}
\|(f,g)\|_{L^q(\Omega)\times L^q(\partial\Omega)}= 
\begin{cases}
\left(\|f\|_{L^q(\Omega)}^q+\|g\|_{L^q(\partial\Omega)}^q\right)^{1/q}\;\; & %
\mbox{ if }\; 1\le q<\infty, \\ 
\max\{\|f\|_{L^\infty(\Omega)},\|g\|_{L^\infty(\partial\Omega)}\} & 
\mbox{
if }\; q=\infty,%
\end{cases}%
\end{equation*}
can be identified with $L^q(\overline{\Omega},\mu)$ where the measure $\mu$
on $\overline{\Omega}$ is defined for a measurable set $A\subset\overline{%
\Omega}$ by $\mu(A)=|\Omega\cap A|+\sigma(\partial\Omega\cap A)$. In
addition, we have that the embedding $\mathbb{H}^{1,\delta}(\overline\Omega)%
\hookrightarrow L^2(\overline\Omega,\mu)$ is compact.

For $\overline{\delta }\in \left\{ 0,1\right\} ,$ let us consider the
following semilinear problem:

\begin{equation}
\begin{cases}
_{C}\partial _{t}^{\gamma }u-\Delta u=f(u\left( t,x\right) ),\quad & %
\mbox{in }Q:=(0,T)\times \Omega , \\ 
\overline{\delta }_{C}\partial _{t}^{\gamma }u|_{\partial \Omega }-\delta
\Delta _{\Gamma }(u|_{\partial \Omega })+\partial _{\nu }u+\beta
(u|_{\partial \Omega })=z, & \mbox{in }\Gamma :=(0,T)\times \partial \Omega
), \\ 
u(0,\cdot )=(u_{0},v_{0})\quad & \mbox{in }\overline{\Omega }.%
\end{cases}
\label{Went-eq}
\end{equation}

%
%

The system \eqref{Went-eq} can be written as the following abstract Cauchy
problem 
\begin{equation}
\begin{cases}
\mathbb{K}_{\overline{\delta }}\left( _{C}\partial _{t}^{\gamma }U\right)
-\Delta _{\delta ,W}U=(f(u),z)\;\;\; & \mbox{ in
}(0,T)\times (\Omega \times \partial \Omega ) \\ 
U(0,\cdot )=(u_{0},v_{0}) & \mbox{ in }\overline{\Omega },%
\end{cases}
\label{Went-eq-2}
\end{equation}%
where we have identified $_{C}\partial _{t}^{\gamma }U$ with $(_{C}\partial
_{t}^{\gamma }u|_{\Omega },_{C}\partial _{t}^{\gamma }u|_{\partial \Omega })$%
, and set%
\begin{equation*}
\mathbb{K}_{\overline{\delta }}:=\left( 
\begin{array}{cc}
1 & 0 \\ 
0 & \overline{\delta }%
\end{array}%
\right) .
\end{equation*}

Clearly, the system \eqref{Went-eq-2} can be rewritten as the abstract
Cauchy problem (\ref{abs-par}) when $D:=\partial \Omega $, $A=-\Delta
_{\delta ,W},$ and%
\begin{equation*}
\mathbb{B}=\left( 
\begin{array}{cc}
0 & 0 \\ 
0 & 1%
\end{array}%
\right) \in \mathcal{L(}\left\{ 0\right\} \times L^{2}\left( D\right) ,L^{2}(%
\overline{\Omega },\mu ))
\end{equation*}%
is a fixed control operator (and so, in that case $\widetilde{\alpha }=0$).
The statement of Corollary \ref{main2} then applies with $\alpha \in (0,1)$, 
$\theta =\frac{\gamma }{2}\left( 1-\alpha \right) $, $\widetilde{\alpha }=0,$
$\beta =-1$ and $(u_{0},v_{0})\in \mathbb{H}^{1,\delta }(\overline{\Omega }%
)=V_{1}$, provided that $\left( f\left( u\right) ,0\right) $ is locally
Lipschitz in the sense of (H3). All the results of Section \ref{ss:optimal}
are satisfied as well provided that all the corresponding assumptions
(H4)-(H4bis) are applied\footnote{%
For instance, for $f\left( u\right) =ru\left( 1-u/K\right) ,$ these
assumptions are satisfied provided that $\left( 2\alpha +1\right) >\frac{n}{2%
},$ a condition which only restricts the value of $\alpha \in \left(
0,1\right) $ in higher space dimensions $n\geq 3.$} to the vector $\left(
f\left( u\right) ,0\right) $ (see Examples \ref{prot1}, \ref{prot2}, \ref%
{prot3} with $s=1$).

With this setup in mind, we take $a_{1}=0,$ $a_{2}>0$, $\zeta \geq 0$ and
consider the datum $z_{\Sigma }\in Y_{\theta ,0}$ (as in Section \ref%
{ss:csoe2}). Since $\mathbb{B=B}^{\ast }$, we conclude with the following.

\begin{theorem}
\label{final-optimal2}Let $z_{\ast }\in Z_{ad}$ be an admissible optimal
control and $u_{\ast }=\mathcal{S}\left( z_{\ast }\right) ,$ the associated
state. The necessary optimal condition (\ref{osc2}) for the problem (\ref%
{Went-eq}) reads%
\begin{equation*}
\int_{0}^{T}\int_{\partial \Omega }\left( w_{|_{\partial \Omega }}+\zeta
z_{\ast }\right) \left[ z-z_{\ast }\right] d\sigma dt\geq 0,\text{ for all }%
z\in Z_{ad},
\end{equation*}%
where $T<T_{\max }$. 
\end{theorem}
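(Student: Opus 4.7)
The plan is to derive the stated variational inequality by specializing the abstract first-order necessary condition given in Theorem \ref{fin-thm-necessary-opt} to the Wentzell-Robin setting described in Section \ref{sec-wentzell}. First, I would check that the abstract framework of Section \ref{ss:optimal} indeed applies: with $A=-\Delta_{\delta,W}$, $\widetilde{\alpha}=0$, $\beta=-1$, and $\alpha\in(0,1)$, the assumptions (HA), (H1)--(H3) hold for the lift $(f(u),0)$ under Examples \ref{prot1}--\ref{prot3}, the initial datum $(u_0,v_0)\in V_1=\mathbb{H}^{1,\delta}(\overline{\Omega})$ is admissible, and $z\in Z_{ad}\subset Z_{\rho,\infty}$ satisfies (H2) for any $q\in(2/\gamma,\infty]$. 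Lemma \ref{lem-lc2} together with Proposition \ref{reg-lin} then guarantees that the control-to-state map $\mathcal{S}$ is Fr\'echet differentiable at $z_{\ast}$ and that the adjoint state $w$ solving \eqref{adjoint0} with $\psi:=d_u J(\mathcal{S}(z_\ast),z_\ast)$ is well defined in $Y_{\theta,\alpha}$, provided $z_\Sigma\in Y_{\theta,0}$.

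Next, I would identify each of the three ingredients in (\ref{vari}). Since $a_1=0$ and $a_2>0$, the derivative of the tracking cost is
\[
d_u J_1\left(\mathcal{S}(z_\ast),z_\ast\right)=\binom{0}{a_2\left(u_\ast|_{\partial\Omega}-z_\Sigma\right)}\in Y_{\theta,0},
\]
which plays the role of the source $\psi$ in the adjoint equation \eqref{adjoint0}. The derivative of the regularization term is $d_z J_2(z_\ast)=\zeta z_\ast\in L^2((0,T)\times\partial\Omega)$. Finally, since
\[
\mathbb{B}=\begin{pmatrix}0 & 0\\0 & 1\end{pmatrix}\in\mathcal{L}\bigl(\{0\}\times L^2(\partial\Omega),\,L^2(\overline{\Omega},\mu)\bigr)
\]
is self-adjoint (it is an orthogonal projection followed by the natural inclusion), its adjoint satisfies $\mathbb{B}^{\ast}W=w|_{\partial\Omega}$ for any $W=(w|_{\Omega},w|_{\partial\Omega})\in L^2(\overline{\Omega},\mu)$. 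This is the computation of Lemma \ref{lem:dSstar} applied in the present concrete geometry.

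Plugging these identifications into the abstract inequality (\ref{vari}) of Theorem \ref{fin-thm-necessary-opt} then yields, for every admissible $z\in Z_{ad}$,
\[
\int_{0}^{T}\!\!\int_{\partial\Omega}\bigl(\mathbb{B}^{\ast}w+\zeta z_\ast\bigr)\bigl[z-z_\ast\bigr]\,d\sigma\,dt=\int_{0}^{T}\!\!\int_{\partial\Omega}\bigl(w|_{\partial\Omega}+\zeta z_\ast\bigr)\bigl[z-z_\ast\bigr]\,d\sigma\,dt\geq 0,
\]
which is the stated necessary condition on the horizon $T<T_{\max}$. The argument is really a direct specialization of the chain-rule/adjoint calculus already assembled in Section \ref{ss:optimal}; the main (and only genuine) obstacle is bookkeeping the product measure $\mu=|\cdot|+\sigma$ and verifying that the Riesz identifications used for $\mathbb{B}^\ast$ are compatible with the duality pairings $\langle\cdot,\cdot\rangle_{V_{\widetilde\alpha},V_{-\widetilde\alpha}}$ appearing in Lemma \ref{lem:dSstar}. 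Once $d_u J_1$ is recognized as being supported on the boundary component of $L^2(\overline{\Omega},\mu)$, no further work beyond quoting Theorem \ref{fin-thm-necessary-opt} is required.
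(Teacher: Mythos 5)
Your proposal is correct and follows essentially the same route as the paper, which simply specializes the abstract optimality condition (\ref{osc2})/(\ref{vari}) of Theorem \ref{fin-thm-necessary-opt} to the Wentzell setting with $a_1=0$, $a_2>0$, $z_\Sigma\in Y_{\theta,0}$, and the observation that $\mathbb{B}=\mathbb{B}^{\ast}$ acts as restriction to the boundary component of $L^2(\overline{\Omega},\mu)$. The only minor slip is your statement that (H2) holds for $q\in(2/\gamma,\infty]$ --- with $\widetilde{\alpha}=0$ the correct range is $q\in\bigl(\tfrac{2}{\gamma(2-\alpha)},\infty\bigr]$, but this is immaterial since $z\in Z_{\rho,\infty}$ allows $q=\infty$.
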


\textsf{Notice that the comment from Remark~\ref{rem:proj}, also applies to
Theorem~\ref{final-optimal2}.}


\section{Appendix}

\label{sec:ap}

For the sake of completeness, we list here the proofs of some statements
that appear in Section \ref{ss:sem}.

\begin{proof}[\textbf{Proof of Lemma \protect\ref{T1-integral-H1}}]
We employ the application of the contraction mapping principle to the mapping%
\begin{equation*}
\Psi \left( u\right) \left( t\right) :=S_{\gamma }\left( t\right)
u_{0}+\int_{0}^{t}P_{\gamma }\left( t-\tau \right) f\left( u\left( \tau
\right) \right) d\tau +\int_{0}^{t}P_{\gamma }\left( t-\tau \right) \mathbb{B%
}z\left( \tau \right) d\tau
\end{equation*}%
in the ball%
\begin{equation*}
B_{T}:=\left\{ u\in C\left( \left[ 0,T\right] ;V_{\alpha }\right)
:\left\Vert u\right\Vert _{C\left( \left[ 0,T\right] ;V_{\alpha }\right)
}\leq R\right\} .
\end{equation*}%
We first show that $\Psi :B_{T_{\ast }}\rightarrow B_{T_{\ast }},$ for some $%
T_{\ast }>0$\ and (any sufficiently large) $R>0.$ Indeed, on account of the
estimates of Proposition \ref{est-SP}, we have%
\begin{align}
\left\vert \Psi \left( u\right) \left( t\right) \right\vert _{\alpha }& \leq
\left\vert S_{\gamma }\left( t\right) u_{0}\right\vert _{\alpha
}+\int_{0}^{t}\left\vert P_{\gamma }\left( t-\tau \right) f\left( u\left(
\tau \right) \right) \right\vert _{\alpha }d\tau  \label{w1} \\
& +\int_{0}^{t}\left\vert P_{\gamma }\left( t-\tau \right) \mathbb{B}z\left(
\tau \right) \right\vert _{\alpha }d\tau  \notag \\
& \lesssim \left\vert u_{0}\right\vert _{\alpha }+\int_{0}^{t}\left( t-\tau
\right) ^{\frac{\gamma }{2}\left( 2-\alpha +\beta \right) -1}\left\vert
f\left( u\left( \tau \right) \right) \right\vert _{\beta }d\tau  \notag \\
& +\int_{0}^{t}\left( t-\tau \right) ^{\frac{\gamma }{2}\left( 2-\alpha +%
\widetilde{\alpha }\right) -1}\left\vert \mathbb{B}z\left( \tau \right)
\right\vert _{\widetilde{\alpha }}d\tau  \notag \\
& \lesssim \left\vert u_{0}\right\vert _{\alpha }+C_{R}t^{\frac{\gamma }{2}%
\left( 2-\alpha +\beta \right) }\left\Vert u\right\Vert _{C\left( \left[ 0,T%
\right] ;V_{\alpha }\right) }+t^{\frac{\gamma }{2}\left( 2-\alpha +%
\widetilde{\alpha }\right) -1+1/p}\left\Vert z\right\Vert _{L^{q}((0,T_{\ast
});L^{2}(D))}  \notag
\end{align}%
owing to the fact that $\mathbb{B}\in \mathcal{L}\left( L^{2}(D),V_{%
\widetilde{\alpha }}\right) $ and $\frac{\gamma }{2}\left( 2-\alpha +%
\widetilde{\alpha }\right) -1+1/p>0.$ The preceeding estimate then yields
the claim that $\Phi :B_{T_{\ast }}\rightarrow B_{T_{\ast }}$ since $%
\left\vert \Psi \left( u\right) \left( t\right) \right\vert _{\alpha }\leq
R, $ for all $0\leq t\leq T_{\ast }$, provided that $T_{\ast },R>0$ are such
that $R\gtrsim 2\left\vert u_{0}\right\vert _{\alpha }$ and 
\begin{equation*}
C_{R}T_{\ast }^{\frac{\gamma }{2}\left( 2-\alpha +\beta \right) }R+T_{\ast
}^{\frac{\gamma }{2}\left( 2-\alpha +\widetilde{\alpha }\right)
-1+1/p}\left\Vert z\right\Vert _{L^{q}((0,T_{\ast });L^{2}(D))}\lesssim
\left\vert u_{0}\right\vert _{\alpha }.
\end{equation*}%
On the other hand, we can choose a much smaller $T_{\ast }>0,$ such that the
mapping $\Psi :B_{T_{\ast }}\rightarrow B_{T_{\ast }}$ is a contraction.
Indeed, for $u,v\in B_{T_{\ast }}$, by the same standard argument, we have%
\begin{align}
\left\vert \Psi \left( u\right) \left( t\right) -\Psi \left( v\right) \left(
t\right) \right\vert _{\alpha }& \leq \int_{0}^{t}\left\vert P_{\gamma
}\left( t-\tau \right) \left( f\left( u\left( \tau \right) \right) -f\left(
v\left( \tau \right) \right) \right) \right\vert _{\alpha }d\tau  \label{w2}
\\
& \lesssim C_{R}t^{\frac{\gamma }{2}\left( 2-\alpha +\beta \right)
}\left\Vert u-v\right\Vert _{C\left( \left[ 0,T_{\ast }\right] ;V_{\alpha
}\right) }  \notag \\
& \lesssim C_{R}T_{\ast }^{\frac{\gamma }{2}\left( 2-\alpha +\beta \right)
}\left\Vert u-v\right\Vert _{C\left( \left[ 0,T_{\ast }\right] ;V_{\alpha
}\right) },  \notag
\end{align}%
provided that $T_{\ast }$ is small enough such that $C_{R}T_{\ast }^{\frac{%
\gamma }{2}\left( 2-\alpha +\beta \right) }\lesssim 1/2.$ Henceforth, the
existence of a unique fixed point $u\in B_{T_{\ast }}$ for the mapping $\Psi 
$ is an immediate consequence of the contraction mapping principle.

Next, we verify the sense in which the initial datum $u\left( 0\right)
=u_{0} $ is satisfied. Recall that $S_{\gamma }$ is strongly continuous as a
mapping from $V_{\alpha }\rightarrow V_{\alpha }.$ Exploiting the argument
of (\ref{w1}), we find that%
\begin{align}
\left\vert u\left( t\right) -u_{0}\right\vert _{\alpha }& \leq \left\vert
u\left( t\right) -S_{\gamma }\left( t\right) u_{0}\right\vert _{\alpha
}+\left\vert S_{\gamma }\left( t\right) u_{0}-u_{0}\right\vert _{\alpha }
\label{w3} \\
& \leq \left\vert S_{\gamma }\left( t\right) u_{0}-u_{0}\right\vert _{\alpha
}+\int_{0}^{t}\left\vert P_{\gamma }\left( t-\tau \right) f\left( u\left(
\tau \right) \right) \right\vert _{\alpha }d\tau  \notag \\
& +\int_{0}^{t}\left\vert P_{\gamma }\left( t-\tau \right) \mathbb{B}z\left(
\tau \right) \right\vert _{\alpha }d\tau  \notag \\
& \lesssim \left\vert S_{\gamma }\left( t\right) u_{0}-u_{0}\right\vert
_{\alpha }+C_{R}t^{\frac{\gamma }{2}\left( 2-\alpha +\beta \right) }R  \notag
\\
& +t^{\frac{\gamma }{2}\left( 2-\alpha +\widetilde{\alpha }\right)
-1+1/p}\left\Vert z\right\Vert _{L^{q}((0,T_{\ast });L^{2}\left( D\right) )}
\notag
\end{align}%
Therefore, the claim in (\ref{ini-cont}) follows by passing to the limit as $%
t\downarrow 0^{+}$ in (\ref{w3}). The proof is
finished.
\end{proof}

\begin{proof}[\textbf{Proof of Lemma \protect\ref{extension}}]
Let $T^{\star }$ be the time from Lemma \ref{T1-integral-H1}. Fix $\tau >0$
and consider the space 
\begin{align*}
\mathbb{K}:=& \Big\{v\in C([0,T^{\star }+\tau ];V_{\alpha })\text{:} \;
v(\cdot ,t)=u(\cdot ,t)\;\qquad \forall \;t\in \lbrack 0,T^{\star }], \\
& \qquad \qquad\left\vert v(\cdot ,t)-u(\cdot ,T^{\star })\right\vert _{\alpha }\leq
R,\;\;\forall \;t\in \lbrack T^{\star },T^{\star }+\tau ]\Big\}.
\end{align*}%
Define the mapping $\Psi ${\textsf{\ }}on $\mathbb{K}$ by 
\begin{equation*}
\Psi (v)(t)=S_{\gamma }\left( t\right) u_{0}+\int_{0}^{t}P_{\gamma }\left(
t-s\right) f\left( v\left( s\right) \right) ds+\int_{0}^{t}P_{\gamma }\left(
t-\tau \right) \mathbb{B}z\left( \tau \right) d\tau .
\end{equation*}%
Note that $\mathbb{K}$ when endowed with the norm of $C([0,T^{\star }+\tau
];V_{\alpha })$ is a closed subspace of $C([0,T^{\star }+\tau ];V_{\alpha })$%
. We show that $\Psi $ has a fixed point in $\mathbb{K}$.

We first show that $\Psi $ maps $\mathbb{K}$ into $\mathbb{K}$. Indeed, let $%
v\in \mathbb{K}$.

If $t\in \lbrack 0,T^{\star }]$, then $v(\cdot ,t)=u(\cdot ,t)$. Hence $\Psi
(v)(t)=\Psi (u)(t)=u(\cdot ,t)$ and there is nothing to prove. If $t\in
\lbrack T^{\star },T^{\star }+\tau ]$, then%
\begin{align*}
& \left\vert \Psi (v)(t)-u(T^{\star })\right\vert _{\alpha } \\
& \leq \left\vert S_{\gamma }(t)u_{0}-S_{\gamma }(T^{\star
})u_{0}\right\vert _{\alpha } \\
& +\left\vert \int_{0}^{t}P_{\gamma }\left( t-s\right) f\left( v\left(
s\right) \right) ds-\int_{0}^{T_{\ast }}P_{\gamma }\left( T_{\ast }-s\right)
f\left( u\left( s\right) \right) ds\right\vert _{\alpha } \\
& +\int_{T_{\ast }}^{t}\left\vert P_{\gamma }\left( t-s\right) \mathbb{B}%
z\left( \tau \right) \right\vert _{\alpha }ds \\
\leq & \left\vert S_{\gamma }(t)u_{0}-S_{\gamma }(T^{\star
})u_{0}\right\vert _{\alpha } \\
& +\int_{0}^{T_{\ast }}\left\vert \left( P_{\gamma }\left( t-s\right)
-P_{\gamma }\left( T_{\ast }-s\right) \right) f\left( u\left( s\right)
\right) \right\vert _{\alpha }ds \\
& +\int_{T_{\ast }}^{t}\left\vert P_{\gamma }\left( t-s\right) f\left(
v\left( s\right) \right) \right\vert _{\alpha }ds+\int_{T_{\ast
}}^{t}\left\vert P_{\gamma }\left( t-s\right) \mathbb{B}z\left( \tau \right)
\right\vert _{\alpha }ds \\
& =:Q_{1}+Q_{2}+Q_{3}+Q_{4}.
\end{align*}%
Since for every $T\geq 0$, the mapping $t\mapsto S_{\gamma }(t)u_{0}$
belongs to $C([0,T],V_{\alpha })$, we can choose $\tau >0$ small such that
for $t\in \lbrack T^{\star },T^{\star }+\tau ]$, we have 
\begin{equation}
Q_{1}=\left\vert S_{\gamma }(t)u_{0}-S_{\gamma }(T^{\star })u_{0}\right\vert
_{\alpha }\leq \frac{R}{4}.  \label{N1}
\end{equation}%
Proceeding as in the proof of Lemma \ref{T1-integral-H1} we can choose $\tau
>0$ small such that for $t\in \lbrack T^{\star },T^{\star }+\tau ]$, we have 
\begin{align}
Q_{3}& =\int_{T_{\ast }}^{t}\left\vert P_{\gamma }\left( t-s\right) f\left(
v\left( s\right) \right) \right\vert _{\alpha }ds\lesssim C_{R}\tau ^{\frac{%
\gamma }{2}\left( 2-\alpha +\beta \right) }R\leq \frac{R}{4},  \label{N2} \\
Q_{4}& \lesssim \tau ^{\frac{\gamma }{2}\left( 2-\alpha +\widetilde{\alpha }%
\right) -1+1/p}\left\Vert z\right\Vert _{L^{q}((T_{\ast },T_{\ast }+\tau
);L^{2}\left( D\right) )}\leq \frac{R}{4}.  \label{N2bis}
\end{align}%
We next write in view of (\ref{op-SP}),%
\begin{align}
Q_{2}& =\int_{0}^{T_{\ast }}\left\vert \left( P_{\gamma }\left( t-s\right)
-P_{\gamma }\left( T_{\ast }-s\right) \right) f\left( u\left( s\right)
\right) \right\vert _{\alpha }ds  \label{N3} \\
& =\int_{0}^{T_{\ast }}\left\vert \left( \gamma \left( t-s\right) ^{\gamma
-1}-\gamma \left( T_{\ast }-s\right) ^{\gamma -1}\right) \int_{0}^{\infty
}\tau \Phi _{\gamma }\left( \tau \right) T\left( \tau \left( t-s\right)
^{\gamma }\right) d\tau w\left( s\right) \right\vert _{\alpha }ds  \notag \\
& +\int_{0}^{T_{\ast }}\left\vert \gamma \left( T_{\ast }-s\right) ^{\gamma
-1}\int_{0}^{\infty }\tau \Phi _{\gamma }\left( \tau \right) \left( T\left(
\tau \left( t-s\right) ^{\gamma }\right) -T\left( \tau \left( T_{\ast
}-s\right) ^{\gamma }\right) \right) d\tau w\left( s\right) \right\vert
_{\alpha }ds  \notag \\
& =:Q_{21}+Q_{22}.  \notag
\end{align}%
Noting that $w\left( s\right) =f\left( u\left( s\right) \right) \in V_{\beta
}$ for $s\in \lbrack 0,T_{\ast }]$, and recalling the semigroup estimate (%
\ref{s-sp}) for $T\left( t\right) :=\exp \left( -At\right) ,$ we have%
\begin{equation*}
\left\vert \left( \gamma \left( t-s\right) ^{\gamma -1}-\gamma \left(
T_{\ast }-s\right) ^{\gamma -1}\right) \int_{0}^{\infty }\tau \Phi _{\gamma
}\left( \tau \right) T\left( \tau \left( t-s\right) ^{\gamma }\right) d\tau
w\left( s\right) \right\vert _{\alpha }\rightarrow 0
\end{equation*}%
in the limit as $t\rightarrow T_{\ast },$ and there exists a constant $%
C_{R}>0$ (where $\left\vert u\left( s\right) \right\vert _{\alpha }\leq R,$ $%
s\in \left[ 0,T_{\ast }\right] $) such that%
\begin{align}
& \left\vert \left( \gamma \left( t-s\right) ^{\gamma -1}-\gamma \left(
T_{\ast }-s\right) ^{\gamma -1}\right) \int_{0}^{\infty }\tau \Phi _{\gamma
}\left( \tau \right) T\left( \tau \left( t-s\right) ^{\gamma }\right) d\tau
w\left( s\right) \right\vert _{\alpha }  \label{N4} \\
& \lesssim C_{R}\left( T_{\ast }-s\right) ^{\gamma -1}\left( t-s\right) ^{-%
\frac{\gamma }{2}\left( \alpha -\beta \right) }\lesssim C_{R}\left( T_{\ast
}-s\right) ^{\frac{\gamma }{2}\left( 2-\alpha +\beta \right) -1}.  \notag
\end{align}%
Thus by the Lebesgue Dominated Convergence Theorem\footnote{%
Note that the right-hand side of (\ref{N4}) is integrable.}, we can choose $%
\tau >0$ small enough such that for $t\in \lbrack T^{\star },T^{\star }+\tau
]$, there holds $Q_{21}\leq \frac{R}{8}$. A similar argument yields that $%
Q_{22}\leq \frac{R}{8}$ on the interval $\left[ T_{\ast },T_{\ast }+\tau %
\right] ,$ for a sufficiently small $\tau >0.$ Henceforth, all the foregoing
estimates imply that $\left\vert \Psi (v)(t)-u(T^{\star })\right\vert
_{\alpha }\leq R,$ for all $t\in \lbrack T^{\star },T^{\star }+\tau ]$. We
have shown that $\Psi $ maps $\mathbb{K}$ into $\mathbb{K}$.

The second step is to show that $\Psi $ is a contraction on $\mathbb{K}$.
Let $v,w\in \mathbb{K}$. Then%
\begin{equation*}
\Psi (v)(t)-\Psi (w)(t)=\int_{0}^{t}P_{\gamma }\left( t-s\right) \left(
f\left( v\left( s\right) \right) -f\left( w\left( s\right) \right) \right)
ds.
\end{equation*}%
Once again if $t\in \lbrack 0,T^{\star }]$, then%
\begin{equation*}
\left\vert \Psi (v)(t)-\Psi (w)(t)\right\vert _{\alpha }=0\leq \frac{1}{2}%
\left\Vert v(t)-w(t)\right\Vert _{C\left( \left[ 0,T_{\ast }\right]
;V_{\alpha }\right) },
\end{equation*}%
owing to the fact that $v=w=u$ on $\left[ 0,T_{\ast }\right] $. On the other
hand, if $t\in \lbrack T^{\star },T^{\star }+\tau ]$, then proceeding as in
the proof of Lemma \ref{T1-integral-H1}, we find%
\begin{align}
\left\vert \Psi (v)(t)-\Psi (w)(t)\right\vert _{\alpha }& \leq \int_{T_{\ast
}}^{t}\left\vert P_{\gamma }\left( t-s\right) \left( f\left( v\left(
s\right) \right) -f\left( w\left( s\right) \right) \right) \right\vert
_{\alpha }ds  \label{w4} \\
& \lesssim C_{R}\tau ^{\frac{\gamma }{2}\left( 2-\alpha +\beta \right)
}\left\Vert v(t)-w(t)\right\Vert _{C\left( \left[ T_{\ast },T_{\ast }+\tau %
\right] ;V_{\alpha }\right) }  \notag \\
& \leq \frac{1}{2}\left\Vert v(t)-w(t)\right\Vert _{C\left( \left[ T_{\ast
},T_{\ast }+\tau \right] ;V_{\alpha }\right) },  \notag
\end{align}%
provided that $\tau >0$ is small enough. We deduce once again that $\Psi $
is a contraction on $\mathbb{K}$ so that it has a unique fixed point $v$ on $%
\mathbb{K}$. The proof is finished.
\end{proof}

\begin{proof}[\textbf{Proof of Theorem \protect\ref{reg-thm}}]
Let $\delta \in \left( 0,T/2\right) $ be an arbitrarily small number and
consider the right-difference%
\begin{equation*}
Z\left( t,h\right) :=h^{-1}\left( u\left( t+h\right) -u\left( t\right)
\right) ,\text{ for }h\in (0,\delta ]\text{ and }\delta <t\leq T.
\end{equation*}%
Notice that $Z\left( t-h,h\right) $ coincides with the left-difference. We
will derive a uniform estimate for $Z\left( t,h\right) $, while we leave the
details of the uniform estimate for $Z\left( t-h,h\right) $ to the
interested reader. For every mild (continuous)\ solution $u\left( t\right)
\in V_{\alpha }$, the continuous function $Z\left( t,h\right) $ satisfies%
\begin{align}
Z\left( t,h\right) & :=h^{-1}\left( S_{\gamma }\left( t+h\right) -S_{\gamma
}\left( t\right) \right) u_{0}  \label{Z1} \\
& +h^{-1}\int_{t}^{t+h}P_{\gamma }\left( s\right) f\left( u\left(
t+h-s\right) \right) ds  \notag \\
& +h^{-1}\int_{0}^{t}P_{\gamma }\left( t-s\right) \left( f\left( u\left(
s+h\right) \right) -f\left( u\left( s\right) \right) \right) ds  \notag \\
& +h^{-1}\int_{t}^{t+h}P_{\gamma }\left( s\right) \mathbb{B}z\left(
t+h-s\right) ds  \notag \\
& +h^{-1}\int_{0}^{t}P_{\gamma }\left( s\right) \mathbb{B}\left( z\left(
t+s+h\right) -z\left( t-s\right) \right) ds  \notag \\
& =:Z_{1}+Z_{2}+Z_{3}+Z_{4}+Z_{5}.  \notag
\end{align}%
We have $\left\vert Z_{1}\left( t,h\right) \right\vert _{\alpha }\rightarrow
0$, as $h\downarrow 0^{+}$ uniformly in $t\in \left[ \delta ,T\right] $
since $S_{\gamma }\left( t\right) $ is analytic for $t\geq \delta >0$ and $%
S_{\gamma }^{^{\prime }}\left( t\right) u_{0}=P_{\gamma }\left( t\right)
Au_{0},$ $u_{0}\in D\left( A\right) $. Hence, we can pick a suffiiciently
small $h_{0}\leq \delta $ such that, for all$\ 0<h\leq h_{0}$ and $\delta
\leq t\leq T,$%
\begin{equation*}
\left\vert Z_{1}\left( t,h\right) \right\vert _{\alpha }\leq 1+\left\vert
P_{\gamma }\left( t\right) Au_{0}\right\vert _{\alpha }\lesssim 1+t^{\gamma
/2\left( 2-\alpha +\beta \right) -1}\left\vert Au_{0}\right\vert _{\beta
}\leq C_{T}t^{\gamma /2\left( 2-\alpha +\beta \right) -1},
\end{equation*}%
for some positive constant $C_{T}$ that depends clearly on $u_{0}\in
V_{\beta +2}$ but is independent of $t,h$ and $\delta $. Next, on account of
Proposition \ref{est-SP}, we estimate%
\begin{eqnarray}
\left\vert Z_{2}\left( t,h\right) \right\vert _{\alpha } &\lesssim
&h^{-1}\int_{t}^{t+h}s^{\frac{\gamma }{2}\left( 2-\alpha +\beta \right)
-1}\left\vert f\left( u\left( t+h-s\right) \right) \right\vert _{\beta }ds
\label{Z3} \\
&\lesssim &C_{R}h^{-1}\left( \left( t+h\right) ^{\frac{\gamma }{2}\left(
2-\alpha +\beta \right) }-t^{\frac{\gamma }{2}\left( 2-\alpha +\beta \right)
}\right)  \notag \\
&\leq &CC_{R}t^{\frac{\gamma }{2}\left( 2-\alpha +\beta \right) -1},  \notag
\end{eqnarray}%
owing to the fact that $\left( t+h\right) ^{r}-t^{r}\leq ht^{r-1}$ for all $%
h,t>0,$ and any $r>0$. Here (and below), the constant $C>0$ is independent
of $h,$ $t$ and $\delta $. Moreover, since $f$ is a (locally) Lipschitz
mapping from $V_{\alpha }\rightarrow V_{\beta }$, we bound%
\begin{align}
\left\vert Z_{3}\left( t,h\right) \right\vert _{\alpha }& \lesssim
h^{-1}\int_{0}^{t}\left( t-s\right) ^{\frac{\gamma }{2}\left( 2-\alpha
+\beta \right) -1}\left\vert f\left( u\left( s+h\right) \right) -f\left(
u\left( s\right) \right) \right\vert _{\beta }ds  \label{Z4} \\
& \leq CC_{R}\int_{0}^{t}\left( t-s\right) ^{\frac{\gamma }{2}\left(
2-\alpha +\beta \right) -1}\left\vert Z\left( s,h\right) \right\vert
_{\alpha }ds,  \notag
\end{align}%
for $0<h\leq \delta ,$ and for all $\delta \leq t\leq T$. Similarly, we
deduce in a similar fashion to (\ref{w1}), that 
\begin{align*}
\left\vert Z_{4}\left( t,h\right) \right\vert _{\alpha }& \lesssim
h^{-1}\int_{t}^{t+h}s^{\frac{\gamma }{2}\left( 2-\alpha +\widetilde{\alpha }%
\right) -1}\left\vert \mathbb{B}z\left( t+h-s\right) \right\vert _{%
\widetilde{\alpha }}ds \\
& \lesssim h^{-1}\left\Vert z\right\Vert _{L^{q}((0,T);L^{2}\left( D\right)
)}[\left( t+h\right) ^{\frac{\gamma }{2}\left( 2-\alpha +\widetilde{\alpha }%
\right) -1+1/p}-t^{\frac{\gamma }{2}\left( 2-\alpha +\widetilde{\alpha }%
\right) -1+1/p}] \\
& \leq C\left\Vert z\right\Vert _{L^{q}((0,T);L^{2}\left( D\right) )}t^{%
\frac{\gamma }{2}\left( 2-\alpha +\widetilde{\alpha }\right) -2+1/p},
\end{align*}%
recalling that $\frac{\gamma }{2}\left( 2-\alpha +\widetilde{\alpha }\right)
-1+1/p>0,$ with $1/p+1/q=1$. Finally, we find that%
\begin{align}
\left\vert Z_{5}\left( t,h\right) \right\vert _{\alpha }& \lesssim
h^{-1}\int_{0}^{t}s^{\frac{\gamma }{2}\left( 2-\alpha +\widetilde{\alpha }%
\right) -1}\left\vert \mathbb{B}\left( z\left( t+s+h\right) -z\left(
t-s\right) \right) \right\vert _{\widetilde{\alpha }}ds  \label{Z4bis} \\
& \lesssim \int_{0}^{t}s^{\frac{\gamma }{2}\left( 2-\alpha +\widetilde{%
\alpha }\right) -1}\left\vert \frac{z\left( t-s+h\right) -z\left( t-s\right) 
}{h}\right\vert _{0}ds  \notag \\
& \lesssim \int_{0}^{t}s^{\frac{\gamma }{2}\left( 2-\alpha +\widetilde{%
\alpha }\right) -1}\left\Vert \partial _{t}z\left( t-s\right) \right\Vert
_{L^{2}}ds  \notag \\
& \lesssim \int_{0}^{t}s^{\frac{\gamma }{2}\left( 2-\alpha +\widetilde{%
\alpha }\right) -1}\left( t-s\right) ^{\rho -1}ds  \notag \\
& \lesssim t^{(\frac{\gamma }{2}\left( 2-\alpha +\widetilde{\alpha }\right)
+\rho )-1},  \notag
\end{align}%
by assumption (\ref{z-reg}). Collecting the uniform estimates for the right
and left differences, one arrives at the following two inequalities:%
\begin{align*}
\left\vert Z\left( t,h\right) \right\vert _{\alpha }& \leq C_{T,R}t^{\theta
-1}+C_{R}\int_{0}^{t}\left( t-s\right) ^{\frac{\gamma }{2}\left( 2-\alpha
+\beta \right) -1}\left\vert Z\left( s,h\right) \right\vert _{\alpha }ds, \\
\left\vert Z\left( t-h,h\right) \right\vert _{\alpha }& \leq
C_{T,R}t^{\theta -1}+C_{R}\int_{0}^{t}\left( t-s\right) ^{\frac{\gamma }{2}%
\left( 2-\alpha +\beta \right) -1}\left\vert Z\left( s-h,h\right)
\right\vert _{\alpha }ds,
\end{align*}%
where, by (H1), 
\begin{equation}
\theta :=\min \left\{ \frac{\gamma }{2}\left( 2-\alpha +\widetilde{\alpha }%
\right) -\frac{1}{q},\frac{\gamma }{2}\left( 2-\alpha +\beta \right) ,\text{ 
}\frac{\gamma }{2}\left( 2-\alpha +\widetilde{\alpha }\right) +\rho \right\}
>0.  \label{thetabis}
\end{equation}%
We can now apply the Gronwall Lemma \ref{A2} to infer the existence of a
constant $C>0$, independent of $h$, $\delta $ and $t$, such that%
\begin{equation}
\left\vert Z\left( t,h\right) \right\vert _{\alpha }\leq Ct^{\theta -1},%
\text{ }\left\vert Z\left( t-h,h\right) \right\vert _{\alpha }\leq
Ct^{\theta -1},  \label{Z5}
\end{equation}%
for $0<h\leq h_{0}\leq \delta $, and for all $t\in \left[ \delta ,T\right] .$
We can now pass to the limit as $h\downarrow 0^{+}$ in the limsup and liminf
sense in (\ref{Z5}), to deduce that both lower Dini derivatives $\partial
_{+}u\left( t\right) ,$ $\partial _{-}u\left( t\right) $ and both upper Dini
derivatives $\partial ^{+}u\left( t\right) ,$ $\partial ^{-}u\left( t\right) 
$ are bounded (as functions with values in $V_{\alpha }$)\ by $Ct^{\theta
-1} $ for all $\delta \leq t\leq T.$ Since $\delta >0$ was arbitrary, all
four Dini derivatives are bounded (and thus finite) in the range for $%
0<t\leq T.$ By application of the celebrated theorem of \ Denjoy-Young-Saks
(see \cite[Chapter IV, Theorem 4.4]{B78}), the continuous integral solution $%
u:\left[ 0,T\right] \rightarrow V_{\alpha }$ is differentiable for almost
all $0<t\leq T,$ and that all four Dini derivatives are equal to $\partial
_{t}u\left( t\right) $ on the set $t\in (0,T]\setminus E$ (where $E$ is a
null set of Lebesegue measure; in fact $E$ is a set of first category, see 
\cite[Chapter IV, Theorem 4.7]{B78}). In particular, this yields the fact
that 
\begin{equation*}
\left\vert \partial _{t}u\left( t\right) \right\vert _{\alpha }\leq
Ct^{\theta -1},
\end{equation*}%
for allmost all $0<t\leq T,$ and the regularity (\ref{u-reg}) follows.
Finally, the last conclusion of the thereom is an immediate consequence of
Proposition \ref{P1}. The proof of the theorem is finished.
\end{proof}

\begin{proof}[\textbf{Proof of Proposition \protect\ref{reg-lin}}]
(i) Owing to (\ref{adjoint4}), we can work with the transformed equation (%
\ref{adjoint3}) for the couple $\left( p,g\right) $. Briefly, since $%
p_{0}=0, $ the contraction mapping principle can be applied to the operator%
\begin{equation*}
\Upsilon \left( p\right) \left( t\right) :=\int_{0}^{t}P_{\gamma }\left(
t-\tau \right) f^{^{\prime }}\left( u\left( \tau \right) \right) p\left(
\tau \right) d\tau +\int_{0}^{t}P_{\gamma }\left( t-\tau \right) g\left(
\tau \right) d\tau
\end{equation*}%
in the ball%
\begin{equation*}
Q_{T_{\ast }}:=\left\{ p\in C\left( \left[ 0,T_{\ast }\right] ;V_{\alpha
}\right) :\left\Vert p\right\Vert _{C\left( \left[ 0,T\right] ;V_{\alpha
}\right) }\leq M\right\} .
\end{equation*}%
It turns out that one can choose a suffiiciently small time $T_{\ast }\leq T$%
, and a sufficiently large $M>0,$ where%
\begin{equation*}
C_{R}T_{\ast }^{\frac{\gamma }{2}\left( 2-\alpha +\beta \right) }\leq \frac{1%
}{2},\text{ }2T_{\ast }^{\frac{\gamma }{2}\left( 2-\alpha +\widetilde{\alpha 
}\right) -1+1/\overline{q}}\left\Vert g\right\Vert _{L^{q}\left( (0,T_{\ast
});V_{\widetilde{\alpha }}\right) }\leq M,
\end{equation*}%
($\overline{q}$ is conjugate to $q$) so that $\Upsilon $ is a strict
contraction on $Q_{T_{\ast }}$. Indeed, owing to (H4), one has for each $%
t\in \left( 0,T_{\ast }\right) ,$%
\begin{equation*}
\left\vert \Upsilon \left( p\right) \left( t\right) \right\vert _{\alpha
}\leq C_{R}t^{\frac{\gamma }{2}\left( 2-\alpha +\beta \right) }\left\Vert
p\right\Vert _{C(\left[ 0,T\right] ;V_{\alpha })}+t^{\frac{\gamma }{2}\left(
2-\alpha +\widetilde{\alpha }\right) -1+1/\overline{p}}\left\Vert
g\right\Vert _{L^{q}\left(( 0,T_{\ast });V_{\widetilde{\alpha }}\right) }
\end{equation*}%
and%
\begin{equation*}
\left\vert \Upsilon \left( p_{1}\right) \left( t\right) -\Upsilon \left(
p_{2}\right) \left( t\right) \right\vert _{\alpha }\leq C_{R}t^{\frac{\gamma 
}{2}\left( 2-\alpha +\beta \right) }\left\Vert p_{1}-p_{2}\right\Vert _{C(%
\left[ 0,T_{\ast }\right] ;V_{\alpha })}.
\end{equation*}%
As in the proof of Theorem \ref{prelim}, the locally defined solution $p$
can be extended to the whole interval $\left( 0,T\right) ,$ for as long as
an optimal solution $u:=u_{\ast }$ exists on $\left( 0,T\right) $. Moreover,
owing to (H4), one has $\xi :=f^{^{\prime }}\left( u\right) p\in
C([0,T];V_{\beta })$, which in turn implies%
\begin{equation*}
\left[ \left\vert A\left( P_{\gamma }\ast g\right) \left( t\right)
\right\vert _{\widetilde{\alpha }-\delta }+\left\vert A\left( P_{\gamma
}\ast \xi \right) \left( t\right) \right\vert _{\widetilde{\alpha }-\delta }%
\right] \lesssim t^{\frac{\gamma \delta }{2}-\frac{1}{q}}\left( \left\Vert
g\right\Vert _{L^{q}\left(( 0,T);V_{\widetilde{\alpha }}\right) }+\left\Vert
\xi \right\Vert _{L^{q}\left( (0,T);V_{\widetilde{\alpha }}\right) }\right)
\end{equation*}%
since $g\in L^{q}\left(( 0,T);V_{\widetilde{\alpha }}\right) $ and $%
\widetilde{\alpha }=\beta $ (this latter condition is chosen for the sake of
simplicity; see Section \ref{ss:sem}). The rest follows analogously to the
proof of Theorem \ref{T1-integral-H1}.

For the proof of (ii), as in the proof of Theorem \ref{reg-thm}, let%
\begin{equation*}
V\left( t,h\right) :=h^{-1}\left( p\left( t+h\right) -p\left( t\right)
\right) ,\text{ for }h>0\text{ and }0<t\leq T.
\end{equation*}%
For every mild continuous\ solution $p\left( t\right) \in V_{\alpha }$, $%
V\left( t,h\right) $ satisfies%
\begin{align}
V\left( t,h\right) & :=h^{-1}\int_{t}^{t+h}P_{\gamma }\left( s\right) \xi
\left( t+h-s\right) ds  \notag \\
& +h^{-1}\int_{0}^{t}P_{\gamma }\left( t-s\right) \left( \xi \left(
s+h\right) -\xi \left( s\right) \right) ds  \notag \\
& +h^{-1}\int_{t}^{t+h}P_{\gamma }\left( s\right) g\left( t+h-s\right) ds 
\notag \\
& +h^{-1}\int_{0}^{t}P_{\gamma }\left( s\right) \left( g\left( t+s+h\right)
-g\left( t-s\right) \right) ds  \notag \\
& =:V_{1}+V_{2}+V_{3}+V_{4}.  \notag
\end{align}%
As before, we estimate by employing (H4)-(H4bis),%
\begin{align*}
\left\vert V_{1}\left( t,h\right) \right\vert _{\alpha }& \lesssim
h^{-1}\int_{t}^{t+h}s^{\frac{\gamma }{2}\left( 2-\alpha +\beta \right)
-1}\left\vert \xi \left( t+h-s\right) \right\vert _{\beta }ds \\
& \leq C_{R,M}h^{-1}\left( \left( t+h\right) ^{\frac{\gamma }{2}\left(
2-\alpha +\beta \right) }-t^{\frac{\gamma }{2}\left( 2-\alpha +\beta \right)
}\right) \\
& \leq C_{R,M}t^{\frac{\gamma }{2}\left( 2-\alpha +\beta \right) -1}.
\end{align*}%
and%
\begin{align*}
\left\vert V_{2}\left( t,h\right) \right\vert _{\alpha }& \lesssim
\int_{0}^{t}\left( t-s\right) ^{\frac{\gamma }{2}\left( 2-\alpha +\beta
\right) -1}\left\vert f^{^{\prime }}\left( u\left( s+h\right) \right)
V\left( s,h\right) \right\vert _{\beta }ds \\
& +h^{-1}\int_{0}^{t}P_{\gamma }\left( t-s\right) (f^{^{\prime }}\left(
u\left( s+h\right) \right) -f^{^{\prime }}\left( u\left( s\right) \right)
)p\left( s\right) ds \\
& \leq C_{R}\int_{0}^{t}\left( t-s\right) ^{\frac{\gamma }{2}\left( 2-\alpha
+\beta \right) }\left\vert Z\left( s,h\right) \right\vert _{\alpha
}\left\vert p\left( s\right) \right\vert _{\alpha }ds \\
& +C_{R}\int_{0}^{t}\left( t-s\right) ^{\frac{\gamma }{2}\left( 2-\alpha
+\beta \right) -1}\left\vert V\left( s,h\right) \right\vert _{\alpha }ds \\
& \lesssim \int_{0}^{t}\left( t-s\right) ^{\frac{\gamma }{2}\left( 2-\alpha
+\beta \right) }s^{\theta -1}ds+\int_{0}^{t}\left( t-s\right) ^{\frac{\gamma 
}{2}\left( 2-\alpha +\beta \right) }\left\vert V\left( s,h\right)
\right\vert _{\alpha }ds \\
& \lesssim t^{\frac{\gamma }{2}\left( 2-\alpha +\beta \right) +\theta
-1}+\int_{0}^{t}\left( t-s\right) ^{\frac{\gamma }{2}\left( 2-\alpha +\beta
\right) }\left\vert V\left( s,h\right) \right\vert _{\alpha }ds,
\end{align*}%
as well as,%
\begin{align*}
\left\vert V_{3}\left( t,h\right) \right\vert _{\alpha }& \lesssim
h^{-1}\int_{t}^{t+h}s^{\frac{\gamma }{2}\left( 2-\alpha +\widetilde{\alpha }%
\right) -1}\left\vert g\left( t+h-s\right) \right\vert _{\widetilde{\alpha }%
}ds \\
& \lesssim h^{-1}\left\Vert g\right\Vert _{L^{q}((0,T);V_{\sim })}[\left(
t+h\right) ^{\frac{\gamma }{2}\left( 2-\alpha +\widetilde{\alpha }\right)
-1+1/\overline{q}}-t^{\frac{\gamma }{2}\left( 2-\alpha +\widetilde{\alpha }%
\right) -1+1/\overline{q}}] \\
& \leq C\left\Vert g\right\Vert _{L^{q}((0,T);V_{\sim })}t^{\frac{\gamma }{2}%
\left( 2-\alpha +\widetilde{\alpha }\right) -2+1/\overline{q}},
\end{align*}%
recalling that $\frac{\gamma }{2}\left( 2-\alpha +\widetilde{\alpha }\right)
-1+1/\overline{q}>0,$ with $1/\overline{q}+1/q=1$. Finally, we find that%
\begin{align}
\left\vert V_{4}\left( t,h\right) \right\vert _{\alpha }& \lesssim
h^{-1}\int_{0}^{t}s^{\frac{\gamma }{2}\left( 2-\alpha +\widetilde{\alpha }%
\right) -1}\left\vert g\left( t+s+h\right) -g\left( t-s\right) \right\vert _{%
\widetilde{\alpha }}ds  \notag \\
& \lesssim \int_{0}^{t}s^{\frac{\gamma }{2}\left( 2-\alpha +\widetilde{%
\alpha }\right) -1}\left\vert \partial _{t}g\left( t-s\right) \right\vert _{%
\widetilde{\alpha }}ds  \notag \\
& \lesssim \int_{0}^{t}s^{\frac{\gamma }{2}\left( 2-\alpha +\widetilde{%
\alpha }\right) -1}\left( t-s\right) ^{\rho -1}ds  \notag \\
& \lesssim t^{(\frac{\gamma }{2}\left( 2-\alpha +\widetilde{\alpha }\right)
+\rho )-1},  \notag
\end{align}%
since $g\in \widetilde{Z}_{\rho ,\infty }$. Collecting these estimates, for
the same value $\theta >0$ as defined earlier, we obtain%
\begin{equation*}
\left\vert V\left( t,h\right) \right\vert _{\alpha }\lesssim t^{\theta
-1}+\int_{0}^{t}\left( t-s\right) ^{\frac{\gamma }{2}\left( 2-\alpha +\beta
\right) -1}\left\vert V\left( s,h\right) \right\vert _{\alpha }ds.
\end{equation*}%
This implies by application of the Gronwall Lemma \ref{A2},%
\begin{equation*}
\left\vert V\left( t,h\right) \right\vert _{\alpha }\leq Ct^{\theta -1}.
\end{equation*}%
A similar estimate applies to the left-difference $V\left( t-h,h\right) .$
The constant on the right hand side is independent of $h,$ so one can pass
to the limit once again as $h\downarrow 0^{+}.$ The proof is finished.
\end{proof}

\bibliographystyle{abbrv}
\bibliography{refs}

\end{document}